\renewcommand{\l@figure}{\@tocline{0}{3pt plus2pt}{0pt}{3em}{}}
\newtheorem*{rep@theorem}{\rep@title}
\newcommand{\newreptheorem}[2]{%
\newenvironment{rep#1}[1]{%
 \def\rep@title{#2 \ref{##1}}%
 \begin{rep@theorem}}%
 {\end{rep@theorem}}}
\newtheorem*{rep@prop}{\rep@title}
\newcommand{\newrepprop}[2]{%
\newenvironment{rep#1}[1]{%
 \def\rep@title{#2 \ref{##1}}%
 \begin{rep@prop}}%
 {\end{rep@prop}}}
\newtheorem*{rep@cor}{\rep@title}
\newcommand{\newrepcor}[2]{%
\newenvironment{rep#1}[1]{%
 \def\rep@title{#2 \ref{##1}}%
 \begin{rep@cor}}%
 {\end{rep@cor}}}
\theoremstyle{plain} 
        \newtheorem{theorem}{Theorem}[section]
        \newtheorem*{theorem*}{Theorem}
        \newtheorem*{conj*}{Conjecture} 
        \newtheorem{lemma}[theorem]{Lemma}
        \newtheorem{prop}[theorem]{Proposition}
         \newtheorem{problem}[theorem]{Problem}
        \newtheorem{cor}[theorem]{Corollary}
        \newtheorem*{quasi_u_problem*}{The Quasisymmetric Uniformization Problem}
\theoremstyle{definition}
        \newtheorem{definition}[theorem]{Definition}
\theoremstyle{remark}
\numberwithin{equation}{section}
\numberwithin{figure}{section}
\newcommand{\diam}  {\operatorname{diam}}
\newcommand{\id} {\operatorname{id}}
\newcommand{\lcm} {\operatorname{lcm}}
\newcommand{\R}{\mathbb{R}}  
\newcommand{\C}{\mathbb{C}}      % komplexe Zahlen
\newcommand{\N}{\mathbb{N}}      % natÃ¼rliche Zahlen
\newcommand{\Z}{\mathbb{Z}}      % ganze Zahlen
\newcommand{\T}{\mathbb{T}}      % Torus
\newcommand{\CDach}{\widehat{\mathbb{C}}}
\newcommand{\D}{\mathbb{D}}      % Einheitskreis
\providecommand{\abs}[1]{\lvert#1\rvert}
\renewcommand{\:}{\colon}
\newcommand{\ra}{\rightarrow}
\newcommand{\sub}{\subset}
\newcommand{\eps}{\epsilon}
\newcommand{\ga}{\gamma}
\newcommand{\iu}{\textbf{\textit{i}}}
\newcommand{\crit}{\operatorname{crit}}
\newcommand{\post}{\operatorname{post}}
\newcommand{\mesh}{\operatorname{mesh}}
\newcommand{\CC}{\mathcal{C}}
\newcommand{\OC}{\mathcal{O}}
\newcommand{\Gtr}{G_{\text{tr}}}
\title{Quotients of Torus Endomorphisms and Latt\`es-Type Maps}%
\author{Mario Bonk}\thanks{Mario Bonk was partially supported by NSF grant DMS-1808856.}
\address{Department of Mathematics, University of California, 
Los Angeles, CA 90095, USA}
\email{mbonk@math.ucla.edu}
\author{Daniel Meyer}
\address{Department of Mathematical Sciences, University of Liverpool,
  %Mathematical Sciences Building,
  Liverpool L69 7ZL,  UK}
\email{dmeyermail@gmail.com}
\date{\today}
\begin{document}
\abstract{We show that if an expanding  Thurston map is  the quotient of a torus endomorphism, then it has a parabolic orbifold and is a Latt\`es-type map.}
  
\endabstract

\maketitle

\tableofcontents

\section{Introduction}
\label{sec:introduction}

The main purpose of this paper is to present an open problem
about Thurston maps that has mystified the authors while writing \cite{BM17}. The
general underlying question  is which  properties of a Thurston map are of a purely topological  nature, or whether 
more geometric or even analytic structure is required to
characterize a property.
Our problem is closely related to certain classes of maps,
namely Latt\`es and Latt\`es-type maps. We start with recalling some background about these classes.

Latt\`es maps are rational maps  on the Riemann
sphere $\CDach=\C\cup\{\infty\}$ that are given as  quotient maps of holomorphic torus endomorphisms.  More precisely, a map
$f\: \CDach \ra \CDach$  is a Latt\`es maps if and only if there exist 
  a (non-homeomorphic and non-constant) holomorphic map 
$\overline{A}\colon \T\to \T$ on a complex torus $\T$   and a non-constant   holomorphic map
$\overline \Theta\: \T\ra \CDach$, such that   we have the following
commutative  diagram: 
\begin{equation}
  \label{eq:def_lattes3}
  \xymatrix{
    \T\ar[r]^{\overline{A}} \ar[d]_{\overline{\Theta}} 
    & \T \ar[d]^{\overline{\Theta}}
    \\
    \CDach \ar[r]^{f} & \CDach\rlap{.}
  }
\end{equation}
Here, a {\em complex torus}  $\T$ is a 
Riemann surface whose underlying $2$-manifold is a
$2$-dimensional torus.

It is then not hard to see that $f$ is a holomorphic map and hence a rational map on $\CDach$.
Moreover, one can show 
(see Theorem~\ref{thm:Lattesstruc}) that every Latt\`es map  $f$ is actually a postcritically-finite
rational map with
a parabolic orbifold (we explain this terminology in Section~\ref{sec:background}).  Verifying  that a map $f$ as
in \eqref{eq:def_lattes3} has indeed  a parabolic orbifold is  the difficult part in the proof of this statement.  
The
argument uses the holomorphicity of $f$ in an
essential way (see  \cite{Mi06} and \cite[p.~64]{BM17}).

Thurston raised the question when a map that behaves as a
rational map in a certain topological way is actually
``equivalent'' to a rational map (see \cite{DH} and \cite{BM17}
for a systematic study of this point of view).  In view of this,
it is natural to consider topological analogs of maps as in
\eqref{eq:def_lattes3}. This means, we consider maps
$f\colon S^2\to S^2$ with the property that there exists a torus
endomorphism $\overline{A} \colon T^2\to T^2$ (i.e., an
unbranched covering map) with topological degree
$\deg(\overline A)\ge 2$, as well as a branched covering map
$\overline{\Theta}\colon T^2 \to S^2$ such that we have the
following commutative diagram:
\begin{equation}
  \label{eq:Lattesdef}
  \xymatrix{
    T^2 \ar[r]^{\overline A} \ar[d]_{\overline\Theta} &
    T^2 \ar[d]^{\overline\Theta}
    \\
    S^2 \ar[r]^f & S^2\rlap{.} 
  }
\end{equation}
Here, $S^2$ is a topological $2$-sphere, and $T^2$ is a topological $2$-torus. We use
notation different from  \eqref{eq:def_lattes3} to indicate
that these are topological objects and not Riemann surfaces,
meaning that the surfaces  are not equipped with a conformal structure. 

If a map 
$f$ arises as in
 \eqref{eq:Lattesdef}, then we call $f$ a \emph{quotient of
  a torus endomorphism} (see Definition~\ref{def:quotient_torus} for a precise statement). One can show that such a map $f$ is 
  actually a Thurston map, i.e., a non-homeomorphic branched covering map with a finite set of postcritical points (see Lemma~\ref{lem:simobserv}). One should expect that these maps are closely related to Latt\`es maps. In particular, one expects a positive answer to the following question. 
  
\begin{problem}
  \label{que:torus_endo_para}
  Does every quotient of a torus endomorphism have a parabolic
  orbifold? 
\end{problem}
We  have repeatedly tried
 to tackle this  problem and also consulted with various experts, but 
a convincing argument for a positive answer is elusive at this point. Accordingly, it seems appropriate to present  a partial answer and some facts related to Problem~\ref{que:torus_endo_para}. This is the main purpose of this paper.

To formulate our result, we first have to define  Latt\`es-type
maps. As this involves somewhat technical terminology,  we will introduce these maps in an informal way for now, but 
will give a precise definition later in Section~\ref{sec:background} (see Definition~\ref{def:Lattestype}).

As a starting point, one  notices (see the discussion at the beginning of Section~\ref{sec:para}) that by a lifting argument 
for each map $f\: S^2\ra S^2$ as in \eqref{eq:Lattesdef}, one has a commutative diagram of the form: 
 \begin{equation}
   \label{eq:def_lattes_type}
   \xymatrix{
     \R^{2} \ar[r]^{A} \ar[d]_{\Theta} & \R^{2} \ar[d]^{\Theta}
     \\
     S^{2} \ar[r]^{f} & S^{2} \rlap{.} 
   }
 \end{equation}
 Here, $\Theta\: \R^2\ra S^2$ is a branched covering map and
 $A\:\R^2\ra \R^2$ is an orientation-preserving homeomorphism
 with a suitable equivariance property with respect to the group
 $G$ of deck transformations of $\Theta$.
 
 In the special situation of \eqref{eq:def_lattes_type} when  $A$ is a (real)  affine map on $\R^2$ and $G$ is a crystallographic group, one calls $f$ a 
 {\em Latt\`es-type map} (see  Definition~\ref{def:Lattestype}). 
 One can show that each Latt\`es map  is also a  Latt\`es-type map.
 This immediately follows from the characterization of Latt\`es maps as in   condition \ref{item:Lattessruciii}  of Theorem~\ref{thm:Lattesstruc}.
  
 Moreover,  each   Latt\`es-type map is a quotient of a torus endomorphism with a parabolic orbifold  (see Proposition~\ref{prop:immediate}). In general, the converse implication is  not true, but our main result provides such a  converse under the assumption that the Thurston map $f$ is {\em expanding} (see  \eqref{eq:mesh} for the precise definition).

\begin{theorem}
  \label{thm:quo}
  Let $f\: S^2\ra S^2$ be an expanding Thurston map. Then the
  following conditions are equivalent:
  
  \begin{enumerate}%[\upshape(i)]
 
%\smallskip 
 \item \label{item:quo1} $f$ is the quotient of a torus endomorphism. 
    
 %\smallskip 
 \item \label{item:quo2}    $f$ has a parabolic orbifold. 
  % \smallskip
 \item 
   \label{item:quo3}
   $f$ is a Latt\`es-type map. 
 \end{enumerate} 
\end{theorem}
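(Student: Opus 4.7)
The plan is to prove the cycle (iii)~$\Rightarrow$~(i), (iii)~$\Rightarrow$~(ii), (ii)~$\Rightarrow$~(iii), and (i)~$\Rightarrow$~(ii). The first two implications are immediate from Proposition~\ref{prop:immediate}, which asserts that every Latt\`es-type map is a quotient of a torus endomorphism with a parabolic orbifold. The implication (ii)~$\Rightarrow$~(iii) should follow from the theory of expanding Thurston maps with parabolic orbifold developed in \cite{BM17}: given such an $f$, the universal orbifold cover $\Theta\colon \R^2\to S^2$ of the canonical orbifold $\mathcal{O}_f$ has crystallographic deck group $G$, and one lifts $f$ to an orientation-preserving homeomorphism $A\colon \R^2\to\R^2$ normalizing $G$; the expanding property of $f$ is then used to conjugate $A$ to an affine map, yielding a Latt\`es-type presentation as in \eqref{eq:def_lattes_type}.

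The main new content is the implication (i)~$\Rightarrow$~(ii). The strategy is to use the ramification data of $\overline\Theta\colon T^2\to S^2$ in \eqref{eq:Lattesdef} to construct an orbifold structure on $S^2$ that is covered by the torus $T^2$ (and hence parabolic), and to compare it with $\mathcal{O}_f$. For $p\in S^2$, let $\widetilde\nu(p)$ be the least common multiple of the local degrees of $\overline\Theta$ at the points of $\overline\Theta^{-1}(p)$, and set $\widetilde{\mathcal{O}} := (S^2,\widetilde\nu)$. Commutativity of \eqref{eq:Lattesdef} together with the fact that $\overline A$ is unbranched yields, for every $x\in T^2$ with $\overline\Theta(x)=p$, the local-degree identity
\[
  \deg_p(f)\cdot \deg_x(\overline\Theta)\;=\;\deg_{\overline A(x)}(\overline\Theta).
\]
Assuming this can be upgraded to uniform fiberwise local degrees for $\overline\Theta$, so that both $\overline\Theta\colon T^2\to \widetilde{\mathcal{O}}$ and $f\colon\widetilde{\mathcal{O}}\to\widetilde{\mathcal{O}}$ become orbifold covering maps, the orbifold Riemann--Hurwitz formula gives
\[
  0\;=\;\chi(T^2)\;=\;\deg(\overline\Theta)\cdot \chi_{\mathrm{orb}}(\widetilde{\mathcal{O}}),
\]
so $\chi_{\mathrm{orb}}(\widetilde{\mathcal{O}})=0$. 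Since $\mathcal{O}_f$ is the minimal orbifold (in the divisibility order) for which $f$ is an orbifold cover, the weight $\nu_f$ divides $\widetilde\nu$, and therefore $\chi_{\mathrm{orb}}(\mathcal{O}_f)\ge \chi_{\mathrm{orb}}(\widetilde{\mathcal{O}})=0$. Combined with the general bound $\chi_{\mathrm{orb}}(\mathcal{O}_f)\le 0$ for Thurston maps of degree at least two, this forces $\chi_{\mathrm{orb}}(\mathcal{O}_f)=0$, so $\mathcal{O}_f$ is parabolic.

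The hard part will be promoting the pointwise identity above to uniform local degrees on each $\overline\Theta$-fiber, equivalently, arranging that $\overline\Theta$ is essentially a regular branched cover. This is exactly the step where the expanding hypothesis on $f$ must enter. One natural route is to pass to the Galois closure $\widetilde T \to S^2$ of $\overline\Theta$; because $\widetilde T\to T^2$ is a finite unbranched cover, $\widetilde T$ is again a torus, and one hopes that a suitable iterate of $\overline A$ lifts to a torus endomorphism of $\widetilde T$ commuting with the regular branched cover to $S^2$. The expanding property of $f$ should be what forces such a lift to exist and the local-degree function to be $\overline A$-equivariant on fibers. It is precisely this regularity step that appears inaccessible in general and that accounts for the difficulty of Problem~\ref{que:torus_endo_para} without the expanding assumption.
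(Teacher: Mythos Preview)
Your reductions (iii)~$\Rightarrow$~(i) and (iii)~$\Rightarrow$~(ii) via Proposition~\ref{prop:immediate} are correct and match the paper. The remaining two implications, however, are only sketched, and in both places the paper proceeds quite differently.

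For (i)~$\Rightarrow$~(ii), you correctly isolate the crux: one must show that $\overline\Theta$ has constant local degree on each fiber. But note that once this is established, Lemma~\ref{lem:simobserv}~\ref{item:simobiii} gives parabolicity of $\mathcal O_f$ \emph{immediately}; your Riemann--Hurwitz and minimality argument is a detour that already presupposes the hard step. The substantive gap is that you do not prove the fiberwise uniformity---the Galois-closure idea is only a hope, and you do not explain why an iterate of $\overline A$ lifts to $\widetilde T$ or why this would force equality of local degrees. The paper takes an entirely different, dynamical route: it lifts everything to $\R^2$ as in \eqref{eq:Athetafpi34}, uses the visual metric to transfer expansion of $f$ to a coarse expansion property of the lift $A$ (Lemma~\ref{lem:exptranstoA}) and to show the associated linear map $L$ is expanding (Corollary~\ref{cor:Lexp}). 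It then exploits density of periodic points of $f$ together with the expansion of $A$ near its fixed points (Corollary~\ref{cor:Anearfix}) to \emph{construct by hand} enough deck transformations of $\Theta$ to act transitively on every fiber (Lemmas~\ref{lem:restrtrans} and~\ref{lem:decktransonfib}). Fiberwise constancy of the local degree of $\overline\Theta$ then drops out, and Lemma~\ref{lem:simobserv}~\ref{item:simobiii} finishes.

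For (ii)~$\Rightarrow$~(iii), your plan to lift $f$ through the universal orbifold cover and ``conjugate $A$ to an affine map'' using expansion is not carried out, and it is not clear how you intend to do this conjugation. The paper instead argues by cases on the orbifold signature: expansion rules out $(\infty,\infty)$ and $(2,2,\infty)$; the three-postcritical-point signatures are handled via Thurston equivalence to a rational (hence Latt\`es) map; and for $(2,2,2,2)$ one invokes Proposition~\ref{prop:fpara_Latt_type} to get a Thurston-equivalent Latt\`es-type map $\widehat f$, checks (using Corollary~\ref{cor:Lexp} and Proposition~\ref{prop:Lattes_type_exp}) that $\widehat f$ is expanding, upgrades Thurston equivalence to topological conjugacy via Theorem~\ref{thm:Th_equiv_top_conj}, and concludes with Lemma~\ref{lem:Lattes_type_conj}.
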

Since every quotient $f$ of a torus endomorphism is actually a Thurston map, the previous statement gives an answer to Problem~\ref{que:torus_endo_para} if $f$ is expanding.  

As we already pointed out, the implication   
\ref{item:quo3}$\Rightarrow$\ref{item:quo1} is known (by Proposition~\ref{prop:immediate}). The most  difficult part in the proof  of Theorem~\ref{thm:quo} is to establish the implication  
\ref{item:quo1}$\Rightarrow$\ref{item:quo2}. Here we cannot rely on holomorphicity as in the proof of the parabolicity of the orbifold of Latt\`es maps as defined in \eqref{eq:def_lattes3}. Instead, we will use  a dynamical argument based  on the expansion properties of  $f$ and its associated maps (see the considerations in Section~\ref{sec:para} which lead to Proposition~\ref{prop:quottorendo=para}).

The proof of the implication
\ref{item:quo2}$\Rightarrow$\ref{item:quo3} relies on the fact
that an expanding Thurston map $f$ with parabolic orbifold cannot
have periodic critical points, which in turn implies that $f$ is
Thurston equivalent to a Latt\`{e}s-type map $g$ (see  Proposition~\ref{prop:fpara_Latt_type}). We will show
that $g$ is expanding (see the proof of Proposition~\ref{prop:para_Lattes}). A standard result 
(Theorem~\ref{thm:Th_equiv_top_conj}) then implies that $f$
and $g$ are in fact topologically conjugate. We can then  conclude that
$f$ itself is a Latt\`{e}s-type map (see
Lemma~\ref{lem:Lattes_type_conj}). 

One may ask to what extent some of these implications are true without the assumption that the Thurston map $f$ is expanding. For \ref{item:quo1}$\Rightarrow$\ref{item:quo2}, this leads to the open 
 Problem~\ref{que:torus_endo_para}. The implication  
\ref{item:quo3}$\Rightarrow$\ref{item:quo1} is still true without expansion (see Proposition~\ref{prop:immediate}). 
The relation between  \ref{item:quo2} and \ref{item:quo3} is covered by the following statement: {\em  Let $f$ be a Thurston map. Then 
 $f$ is Thurston equivalent to a Latt\`es-type map if and only if 
 $f$ has a parabolic orbifold and no periodic critical points}
 (see Proposition~\ref{prop:fpara_Latt_type}). Note that Thurston maps 
with   parabolic orbifolds and  periodic critical points are  also easy to classify up to Thurston equivalence: essentially, these are power
maps $z\mapsto z^n$ and Chebyshev polynomials (see 
\cite[Chapter 7]{BM17}).

The paper is organized as follows. We review all the relevant background and preliminaries in Section~\ref{sec:background}.
The proof of the implications \ref{item:quo1}$\Rightarrow$\ref{item:quo2} and \ref{item:quo2}$\Rightarrow$\ref{item:quo3} 
in Theorem~\ref{thm:quo} are then given in  Sections~\ref{sec:para} and ~\ref{sec:charLtype}. We wrap up the proof of  Theorem~\ref{thm:quo} at the end of Section~\ref{sec:charLtype}.

\subsection*{Notation}
\label{sec:notation}
When an object $A$ is defined to be another object $B$, we write $A\coloneqq B$
for emphasis. 

%\sloppy
We denote by $\N=\{1,2,\dots\}$ the set of natural numbers and  by 
$\N_0=\{0,1, 2, \dots \}$
the set of natural numbers
including $0$. 
The sets 
of integers, real numbers, and complex numbers are denoted by $\Z$, 
 $\R$, and $\C$, respectively.  We let $\CDach\coloneqq \C\cup\{\infty\}$ be
the Riemann sphere.
%\fussy
 We also consider
$\widehat{\N}\coloneqq \N\cup\{\infty\}$. 
%(see Section~\ref{sec:orbif-assoc-thurst}). 
 If $A\sub \widehat{\N}$, then $\lcm(A)\in  \widehat{\N}$
denotes the least common multiple of the numbers in $A$.

 %It carries  the \emph{chordal metric}. 

When we consider two objects $A$ and $B$, and there is a natural
identification between them that is clear from the context, we
write 
$A\cong B$.\index{$\cong$} 
For example, $\R^2\cong \C$ if we identify a point $(x,y)\in \R^2$ with $x+ y\iu \in \C$, where $\iu$ is the imaginary unit.

The cardinality of a set $X$ is denoted by $\#X$ and the identity map on $X$ by $\id_X$. 
If $x_n\in X$ for $n\in \N$ are points in $X$, we  denote the sequence of these points 
  by $\{x_n\}_{n\in\N}$, or just
by $\{x_n\}$ if the index set $\N$ is understood.

  If $f\: X \ra X$ is a map and $n\in \N$, then 
$$f^n\coloneqq \underbrace{f\circ \dots \circ f}_{\text{$n$ factors} } $$ is the $n$-th iterate of $f$. We set $f^0\coloneqq \id_X$ for convenience.

Let $f\colon X\to Y$ be a map between sets $X$ and $Y$. If
$U\sub  X$, then 
$f|U$
stands for the \emph{restriction} of $f$ to $U$. If $A\sub Y$, then 
$f^{-1}(A)\coloneqq \{x\in X : f(x)\in A\}$ is the preimage of
$A$ in $X$. Similarly,  $f^{-1}(y)\coloneqq
\{x\in X : f(x)=y\}$ is the preimage of a point $y\in Y$. 

If $f\: X\ra X$ is a map, then preimages of a set $A\subset X$ or a point $p\in X$ under the
$n$-th iterate $f^n$ are denoted by $f^{-n}(A)\coloneqq \{x\in X
: f^n(x) \in A\}$ and $f^{-n}(p)\coloneqq \{x\in X : f^n(x)=p\}$, 
respectively. 

Let $(X,d)$ be a metric space, and $M\sub X$.
 Then we denote by 
 $$\diam_d(M)\coloneqq \sup \{d(x,y): x,y\in M\}$$ 
 the diameter of $M$. We drop the
subscript $d$ here if the metric $d$ is clear
from the context.

%
%, $a\in X$, and $r>0$. We denote by
%$B_d(a,r)=\{x\in X:d(a,x)< r\}$ the open and by
%$\overline B_d(a,r)=\{x\in X: d(a,x)\le r\}$ the 
%closed ball of radius $r$ centered at $a$. If
%$A,B\sub X$, we let $\diam_d(A)$ be the diameter, $\overline A$
%be the closure of $A$ in $X$, and
%$$ \dist_d(A,B)\coloneqq\inf\{d(x,y): x\in A, y\in B\}$$
%be the distance of $A$ and $B$.   
%If $\gamma\colon [0,1]\to X$ is a path, we denote by
%$\length_d(\gamma)$ the length of $\gamma$. 

\section{Background}
\label{sec:background}
In this section we state some relevant definitions and collect some facts for the convenience of the reader. More details on all of these topics can be found in \cite{BM17}. 
 
\subsection{Branched Covering Maps} 
\label{sec:branched covers} We closely follow the presentation in \cite[Section 2.1 and Section A.6] {BM17}. A {\em surface} is a connected and oriented topological $2$-manifold. A surface $X$ is a {\em topological disk} if it is homeomorphic to the unit disk $\D\coloneqq\{ z\in \C: |z|<1\}$.  

Let $X$ and $Y$ be surfaces, and $f\: X\ra Y$ be a continuous
map.
Then $f $ is a {\em branched covering map}, if for each
point $q\in Y$, there exists
a topological disk $V\subset Y$ with
$q\in V$ that is {\em evenly covered}\index{evenly covered} by
$f$ in the following sense: for some index set $I\ne \emptyset$
we can write $f^{-1}(V)$ as a disjoint union
 $$f^{-1}(V)=\bigcup_{i\in I} U_i$$ of open  sets  $U_i\sub X$, such that $U_i$ 
 contains precisely one point $p_i\in f^{-1}(q)$. Moreover, we require that for each $i\in I$, there 
 exists $d_i\in \N$, 
  and orientation-preserving homeomorphisms
 $\varphi_i\: U_i \ra \D$ and $\psi_i\: V\ra \D$ with $\varphi_i(p_i)=0$ and $\psi_i(q)=0$, such that 
 \begin{equation}\label{eq:z^dbrcov}
  (\psi_i\circ f \circ \varphi_i^{-1})(z)=z^{d_i}
  \end{equation}
 for all $z\in \D$.

 For
 given $f$, the number $d_i$ is uniquely determined by $p=p_i$ and
 called the 
 {\em local degree}\index{local degree|textbf}\index{deg@$\deg_f(p), \deg(f,z)$|textbf} 
 of $f$ at $p$, denoted by $\deg(f,p)$. Our definition allows different local degrees
 at points in the same fiber $f^{-1}(q)$.

 Every branched covering map $f\: X\ra Y$ is surjective, 
 {\em open}
(images of open sets are open), and 
{\em discrete}
 (the preimage set  of every point is {\em discrete in} $X$, i.e., it has no limit points in $X$). Every (locally orientation-preserving) covering map (see \cite[Section A.5]{BM17}) is also a branched covering map.

 A 
 \emph{critical point}
 of a branched covering map $f\colon X\to Y$
 is a point $p\in X$ with $\deg(f,p)\ge 2$. A 
 {\em critical value}\index{critical!value} 
 is a point $q\in Y$, such that the fiber $f^{-1}(q)$ contains a critical point of $f$. 
The set of critical points of $f$ is discrete in $X$ and  the set of critical values of $f$ is discrete in $Y$.
If 
$f\colon X\to Y$ is a branched 
covering map, then $f$ is an orientation-preserving local homeomorphism
near each point $p\in X$ that is not a critical point of $f$. 

If $X$ is a compact surface and $f\: X\ra X$ is a branched covering map, then we denote by $\deg(f)\in \N$ the {\em topological degree}  
of $f$ (see  \cite[Section 2.1 and A.4]{BM17} for the precise  definition and more discussion). 

The following statement is useful if one has to deal with compositions of branched covering maps (see \cite[Lemma A.16]{BM17}). 
\begin{lemma}[Compositions of branched covering maps]
  \label{lem:2_3_branched}
    Let $X$, $Y$, and $Z$ be surfaces, and 
   $f\colon X\to Z$, $g\colon Y\to Z$, and 
  $h\colon X\to Y$ be continuous maps, such that $f=g\circ h$. 
  \begin{enumerate}
  \item 
    \label{item:2_out3_1}
    If $g$ and $h$ are branched covering maps,  and $Y$ and $Z$ 
     are compact, then $f$ is also a branched covering map.
  \item 
    \label{item:2_out_3_2}
    If $f$ and $g$ are branched covering maps, then $h$ is a
    branched covering map. Similarly,  if $f$ and $h$ are branched covering maps, then $g$
    is a branched covering map.
  \end{enumerate}
  \end{lemma}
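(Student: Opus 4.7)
The plan is to prove each claim by reducing to the power-map normal forms that define a branched covering, aligning the source and target normal forms by successive shrinking of the charts.

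\textbf{Part (i).} Fix $q\in Z$. The idea is to refine nested evenly covered disks so that they line up under composition. Start with a disk $V_0\subset Z$ around $q$ evenly covered by $g$, yielding $g^{-1}(V_0)=\bigsqcup_{j\in J}W_j$ with $W_j$ a disk around a unique point $p_j\in g^{-1}(q)$ and $g|_{W_j}$ conjugate to $w\mapsto w^{e_j}$. Since $Y$ is compact and $g^{-1}(q)$ is discrete, $J$ is finite. For each $j\in J$ choose a disk $D_j\subset W_j$ around $p_j$ evenly covered by $h$. Now shrink $V_0$ to a subdisk $V$ whose chart image is a small round disk around $0$; the component $W_j'$ of $g^{-1}(V)$ containing $p_j$ is then a topological disk contained in $D_j$, since in charts it is the preimage of a round disk under $w\mapsto w^{e_j}$. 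Decompose $h^{-1}(W_j')=\bigsqcup_k U_{j,k}$ using the evenly covered data for $h$ inside $D_j$, and compose the power-map charts to obtain the normal form $z\mapsto z^{d_{j,k}e_j}$ for $f|_{U_{j,k}}$. The disjoint decomposition $f^{-1}(V)=\bigsqcup_{j,k}U_{j,k}$ then exhibits $V$ as evenly covered by $f$.

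\textbf{Part (ii).} The strategy is to verify the normal-form condition locally. For the first statement, fix $p\in Y$, set $q:=g(p)$, and choose a disk $V\subset Z$ around $q$ that is simultaneously evenly covered by $f$ and by $g$ (intersect the two evenly covered disks and pass to a smaller common one). Let $W$ be the component of $g^{-1}(V)$ containing $p$ and $f^{-1}(V)=\bigsqcup_i U_i$ the decomposition for $f$. Each $U_i$ is connected, so $h(U_i)$ lies in a single component of $g^{-1}(V)$, and those $U_i$ with $h(U_i)\subset W$ exhaust $h^{-1}(W)$. For such a $U_i$, expressing $g\circ h|_{U_i}=f|_{U_i}$ in the chosen charts yields a relation $\tilde h(z)^{e}=F(z^{d_i})$, where $e=\deg(g,p)$, $d_i=\deg(f,u_i)$ for $u_i$ the unique preimage of $q$ in $U_i$, and $F$ is an orientation-preserving self-homeomorphism of $\D$ fixing $0$. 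A winding-number argument on small circles around $0$ forces $e\mid d_i$ and shows that $\tilde h$ is a branched covering of degree $d_i/e$ at $0$; after a further homeomorphic chart change on $W$ that absorbs $F$, one obtains the normal form $z\mapsto z^{d_i/e}$ for $h|_{U_i}$. For the second statement, observe first that $g$ is open via the identity $g(U)=f(h^{-1}(U))$ for $U\subset Y$ open, and that $g^{-1}(q)$ is discrete for each $q\in Z$: given $p\in g^{-1}(q)$, take $u\in h^{-1}(p)$ and an open neighborhood $N$ of $u$ meeting $f^{-1}(q)$ only at $u$, so $h(N)$ is open and meets $g^{-1}(q)$ only at $p$. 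The power-map normal form for $g$ at $p$ is then extracted by choosing evenly covered disks for $h$ around $p$ and for $f$ around $u\in h^{-1}(p)$, intersecting them, and running the same chart-composition argument as in the first statement, this time solving for the target rather than the middle factor.

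The main obstacle I expect is the chart-level extraction of the power-map normal form in (ii): showing that a continuous map $\tilde h\colon\D\to\D$ with $\tilde h(0)=0$ satisfying $\tilde h^{e}=F\circ(\cdot)^{d}$, for a homeomorphism $F$ of $\D$ fixing $0$, is itself (after a further chart change) the power map $z\mapsto z^{d/e}$. The divisibility $e\mid d$ is not a priori given; it must be deduced from continuity via winding numbers of $\tilde h$ and of $F\circ(\cdot)^{d}$ restricted to small circles around $0$, and only afterwards can one produce compatible charts exhibiting $\tilde h$ as the model power map.
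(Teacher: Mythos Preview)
The paper does not actually prove this lemma: it is quoted from \cite[Lemma~A.16]{BM17} and used as a black box, so there is no in-paper argument to compare your proposal against.

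On its own merits your outline is the standard and correct approach. In part~(i), the compactness of $Y$ is used exactly as you say, to make $J$ finite so that a single shrinking of $V$ simultaneously forces every $W_j'$ into the corresponding $D_j$; the composition of power-map charts then gives the normal form for $f$. In part~(ii), your chart-level relation $\tilde h(z)^e = F(z^{d_i})$ and the winding-number deduction of $e\mid d_i$ are the right ingredients. The step you flag as the main obstacle is real but tractable: from the relation one sees that $\tilde h$ is a local homeomorphism on $\D\setminus\{0\}$ (locally $\tilde h$ is an $e$-th root of a local homeomorphism), that $\tilde h^{-1}(0)=\{0\}$, and that $\tilde h$ is proper from a punctured neighborhood of $0$ onto a punctured neighborhood of $0$; the classification of finite covers of the punctured disk then puts $\tilde h$ into the form $z\mapsto z^{d_i/e}$ after a further chart change. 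Your argument for the second half of~(ii) (openness of $g$ via $g(U)=f(h^{-1}(U))$, which uses surjectivity of the branched cover $h$, and discreteness of fibers) is also correct.
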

  
Let $X$, $Y$, and $Z$ be surfaces, and 
   $h\colon X\to Y$, $g\colon Y\to Z$ be branched covering maps.
If  $g\circ h\: X\ra Z$ is also a 
  branched covering map, then we have
   \begin{equation}\label{eq:locdegmult}
 \deg(g\circ h, x)=\deg(g, h(x))\cdot \deg(h,x)
\end{equation}
for all $x\in X$. We will use this multiplicativity  of
local degrees throughout,  usually without specific reference. For
the proof we refer to \cite[Lemma A.17]{BM17}. Note that there
slightly stronger assumptions were used, but the proof for
\eqref{eq:locdegmult}  remains
valid without change. 

% If we assume
%   that two of the maps $f,g,h$ are branched covering maps, then the third
%   one is a branched covering map as well. 
%  If, in addition, $X$, $Y$, $Z$ are Riemann surfaces and the two branched covering maps in the hypotheses of \ref{item:2_out3_1}
%or \ref{item:2_out_3_2} are  holomorphic,  then the
%  third map is also holomorphic.  

\subsection{Thurston Maps}
\label{sec:thurston-maps}

 Throughout this paper, $S^2$ denotes a topological  $2$-sphere.
  We assume that $S^2$ is 
equipped with a fixed  orientation. To be able to use metric language,
we also assume that $S^2$ carries  a base metric that induces the
given  topology on $S^2$.

Let $f\colon S^2\to S^2$ be a branched covering map. 
We denote by
$$\crit(f)\coloneqq\{p\in S^2:\deg(f,p)\ge 2\}$$ its (finite) set of critical points and by
\begin{equation*}
  \post(f) = \bigcup_{n\geq 1} \{f^n(c) : c\in \crit(f)\}
\end{equation*}
its set of postcritical points.  One can show  that
$\post(f^n)=\post(f)$. If $\post(f)$ is a finite set and
$\deg(f)\ge 2$, we call $f$ a \emph{Thurston map}. We also say
that $f$ is \emph{postcritically-finite}. If, in addition, $f$ is
defined on $\CDach$ and holomorphic, then $f$ is a
postcritically-finite rational map and we call $f$ a \emph{rational}
Thurston map. A {\em periodic point} of $f$ is a point
$p\in S^2$ with $f^n(p)=p$ for some $n\in \N$.  For more details
see \cite[Section 2.1]{BM17}.

%For simplicity we denote by $f^{-n}$ the inverse of $f^n$. 

\subsection{Expansion}
\label{sec:expansion}

Let $f\: S^2\ra S^2$ be a Thurston map. We say that $f$ is \emph{expanding}, if for some Jordan curve $\CC\subset S^2$ with $\post(f)\subset\CC$, we have 
\begin{equation}\label{eq:mesh}
  \lim_{n\to \infty} \mesh(f,n,\CC) =0.
\end{equation}
Here $\mesh(f,n,\CC)$ denotes the supremum of the diameters of
components of $S^2{\setminus }f^{-n}(\CC)$. This  condition 
is independent of
the choice of the curve $\CC$ and the base metric on $S^2$ (see \cite[Section 6.1]{BM17}). 

If $f\colon \CDach \to \CDach$ is a rational Thurston map, then
it is expanding if and only if $f$ has no periodic critical
points. This is the case if and only if 
its Julia set is the whole sphere $\CDach$ (see 
\cite[Proposition 2.3]{BM17}). 

Every expanding Thurston map $f\: S^2\ra S^2$ has an associated {\em visual metric} on $S^2$ that induces the given topology.
The metric $\varrho$ has an associated {\em expansion factor} $\Lambda>1$. We refer to \cite[Chapter 8]{BM17}
for precise definitions. We will only need one fact about visual metrics. 

\begin{lemma}
  \label{lem:liftpathshrinks}
  Let $f\:S^2\ra S^2$ be an expanding Thurston map, and $\varrho$
  be a visual metric for $f$ with expansion factor
  $\Lambda>1$. Then there exist constants $\delta_\varrho>0 $ and
  $C>0$ such that for each path $\alpha \colon [0,1]\to S^2$ with
  $\diam_\varrho(\alpha)<\delta_\varrho$, each $n \in \N$, and
  each path $\widetilde \alpha \: [0,1]\ra S^2$ with
  $f^n\circ \widetilde \alpha=\alpha$, we have
  \begin{equation*}
    \diam_\varrho (\widetilde \alpha) \le  C \Lambda^{-n}.
  \end{equation*}
\end{lemma}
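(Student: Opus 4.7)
The plan is to use the cell decompositions $\DD^n$ of $S^2$ that are associated with $f$ and a Jordan curve $\CC\subset S^2$ with $\post(f)\subset\CC$ witnessing expansion, as developed in \cite[Chapters~5 and~8]{BM17}. The closures of the components of $S^2\setminus f^{-n}(\CC)$ are the \emph{$n$-tiles}, and together with their edges and vertices they form the cell decomposition $\DD^n$. A basic property of a visual metric $\varrho$ with expansion factor $\Lambda>1$ is the existence of a constant $K_0>0$ such that $\diam_\varrho(X)\le K_0\Lambda^{-n}$ for every $n$-tile $X$ and every $n\in\N_0$. I will also need the slightly stronger uniform bound
\[
\diam_\varrho(W^n(v)) \le K_1 \Lambda^{-n}
\]
for every $n$-vertex $v$, where the \emph{$n$-flower} $W^n(v)$ denotes the union of those $n$-tiles that contain $v$.

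With these ingredients in hand I would argue as follows. Choose a Lebesgue number $\delta_\varrho>0$ for the open cover of $S^2$ by the interiors of the $0$-flowers (such a cover exists because every point of $S^2$ lies in the interior of the flower of some $0$-vertex). If $\alpha\colon[0,1]\to S^2$ satisfies $\diam_\varrho(\alpha)<\delta_\varrho$, then the image of $\alpha$ is contained in a single $0$-flower $W^0(v)$. Given a lift $\widetilde\alpha$ under $f^n$, I would use the fact that $f^n\colon S^2\to S^2$ is a branched covering that maps $\DD^n$ cellularly onto $\DD^0$, so that $(f^n)^{-1}(W^0(v))$ equals the union of the $n$-flowers $W^n(\widetilde v)$ over the preimages $\widetilde v\in(f^n)^{-1}(v)$, and distinct such flowers share at most cells of lower dimension. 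Since the image of $\widetilde\alpha$ is connected, it lies inside a single $n$-flower $W^n(\widetilde v)$, and therefore
\[
\diam_\varrho(\widetilde\alpha) \le \diam_\varrho(W^n(\widetilde v)) \le K_1 \Lambda^{-n}.
\]
Setting $C\coloneqq K_1$ then gives the conclusion.

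The substantive part of this plan is the uniform flower estimate $\diam_\varrho(W^n(v))\le K_1\Lambda^{-n}$. While each individual $n$-tile has diameter at most $K_0\Lambda^{-n}$, an $n$-flower $W^n(v)$ may contain as many as $2\deg(f^n,v)$ tiles, a number that grows with $n$ whenever $v$ projects to a critical value, so the flower bound does not follow formally from the single-tile estimate. However, the visual metric axioms are tailored precisely so that combinatorial adjacency of tiles at the same level is reflected in $\varrho$-distance, and the flower bound is a standard consequence recorded in \cite[Chapter~8]{BM17}. Once that estimate is quoted, the rest of the argument reduces to the Lebesgue number and cellular-lifting observations above.
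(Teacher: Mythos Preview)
Your argument is correct and is essentially the content of \cite[Lemma~8.9]{BM17} together with the flower machinery of \cite[Chapter~5]{BM17}, which is precisely what the paper invokes (without giving details) to justify this lemma. Two minor points: the $n$-flowers $W^n(v)$ are already open sets, and for distinct preimages $\widetilde v\in (f^n)^{-1}(v)$ the corresponding $n$-flowers are in fact the (pairwise disjoint) connected components of $(f^n)^{-1}(W^0(v))$, so your connectedness step goes through cleanly.
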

This follows from \cite[Lemma 8.9]{BM17} and the discussion after
this lemma. In other words, if we lift a path with sufficiently
small diameter under $f^n$, then the lifts shrink uniformly at an
exponential rate as $n\to \infty$.

\subsection{Thurston Equivalence}
\label{sec:thurston-equivalence}

For Thurston maps one often considers the following notion of
equivalence (see \cite[Section~2.4]{BM17} for more explanations). 

\begin{definition}
  \label{def:Th_equiv}
  Let $f\colon S^2\to S^2$ and $\widehat f \colon \widehat{S}^2 \to
  \widehat{S}^2$ be Thurston maps. Then they are called
  \emph{Thurston equivalent} if there exist homeomorphisms
  $h_0,h_1\colon S^2\to \widehat{S}^2$ that are isotopic relative to 
  $\post(f)$ and satisfy $h_0\circ f = \widehat f \circ h_1$. 
\end{definition}

Here $\widehat{S}^2$ is another topological $2$-sphere.

Two maps $f\colon S^2\to S^2$ and
$\widehat f\colon \widehat{S}^2 \to \widehat{S}^2$ are called
{\em topologically conjugate} if there exists a homeomorphism
$h\: S^2 \ra \widehat S^2$ such that
$h\circ f = \widehat f \circ h$.  It easily follows from the
definitions that if two Thurston maps are topologically
conjugate, then they are Thurston equivalent. The converse is
true if the maps are expanding.

\begin{theorem}
  \label{thm:Th_equiv_top_conj}
    Let $f\colon S^2\to S^2$ and $\widehat f \colon \widehat{S}^2 \to
  \widehat{S}^2$ be expanding Thurston maps that are Thurston
  equivalent. Then they are topologically conjugate. 
\end{theorem}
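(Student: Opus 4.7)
The plan is to construct the topological conjugacy as a uniform limit of homeomorphisms obtained by iteratively lifting the given Thurston-equivalence isotopy through $\widehat f$, and to control the convergence using the expansion of $\widehat f$.

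First I would set up the iteration. Let $H\colon \Sp\times[0,1]\to \widehat{S}^2$ be an isotopy rel $\post(f)$ from $h_0$ to $h_1$. Because $H_t$ is a homeomorphism agreeing with the fixed bijection $h_0|_{\post(f)}$ on $\post(f)$, we have $H_t^{-1}(\post(\widehat f))=\post(f)$ for every $t$. Thus $\widetilde H_t\coloneqq H_t\circ f$ maps $\Sp\setminus f^{-1}(\post(f))$ into $\widehat{S}^2\setminus \post(\widehat f)$, the open set on which $\widehat f$ restricts to a genuine covering map. Since $\widetilde H_0=h_0\circ f=\widehat f\circ h_1$, the covering homotopy theorem lifts $\widetilde H$ through $\widehat f$ to an isotopy $H^{(1)}$ starting at $h_1$; its endpoint $h_2$ extends continuously across $f^{-1}(\post(f))$ by the correspondence forced by $\widehat f\circ h_2=h_1\circ f$, and Lemma~\ref{lem:2_3_branched} together with the multiplicativity \eqref{eq:locdegmult} shows that each $H^{(1)}_t$ is a degree-one branched cover, hence a homeomorphism. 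Iterating produces homeomorphisms $h_n\colon \Sp\to \widehat{S}^2$ and isotopies $H^{(n)}$ from $h_n$ to $h_{n+1}$ with
\begin{equation*}
\widehat f\circ h_{n+1}=h_n\circ f\qquad\text{and}\qquad \widehat f^{\,n}\circ H^{(n)}_t=H_t\circ f^n.
\end{equation*}

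Next I would invoke expansion. Let $\varrho$ be a visual metric for $\widehat f$ with expansion factor $\Lambda>1$, and let $\delta_\varrho,C$ be as in Lemma~\ref{lem:liftpathshrinks}. For each $x\in \Sp$, the track $t\mapsto H^{(n)}_t(x)$ is an $\widehat f^{\,n}$-lift of $t\mapsto H_t(f^n(x))$. Uniform continuity of $H$ on the compact space $\Sp\times[0,1]$ produces $N\in \N$ such that, for every $y\in \Sp$ and every $k\in\{1,\dots,N\}$, the restriction of $t\mapsto H_t(y)$ to $[(k-1)/N,\,k/N]$ has $\varrho$-diameter less than $\delta_\varrho$. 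Applying Lemma~\ref{lem:liftpathshrinks} to each of the $N$ lifted sub-arcs yields
\begin{equation*}
\varrho\bigl(h_n(x),h_{n+1}(x)\bigr)\le NC\Lambda^{-n}
\end{equation*}
uniformly in $x\in \Sp$. Hence $\{h_n\}$ is uniformly Cauchy and converges uniformly to a continuous map $h\colon \Sp\to \widehat{S}^2$. Passing to the limit in $\widehat f\circ h_{n+1}=h_n\circ f$ gives $\widehat f\circ h=h\circ f$.

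Finally I would show that $h$ is a homeomorphism by running the same construction for the inverse equivalence. Rewriting $h_0\circ f=\widehat f\circ h_1$ as $f\circ h_1^{-1}=h_0^{-1}\circ \widehat f$ shows that $(h_0^{-1},h_1^{-1})$, together with the pointwise inverse isotopy $t\mapsto H_t^{-1}$, is a Thurston equivalence from $\widehat f$ to $f$. By uniqueness of lifts with a prescribed starting homeomorphism, an induction parallel to the first paragraph shows that the resulting sequence $g_n\colon \widehat{S}^2\to \Sp$ equals $h_n^{-1}$. Taking uniform limits in $g_n\circ h_n=\id_{\Sp}$ and $h_n\circ g_n=\id_{\widehat{S}^2}$ produces a continuous two-sided inverse $g$ of $h$. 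The main obstacle is the initial topological setup: one must verify that $H_t^{-1}(\post(\widehat f))=\post(f)$ (so that $\widetilde H_t$ avoids the branch values of $\widehat f$), ensure that each intermediate lift is an actual homeomorphism, and extend the lifts continuously across the finite preimage sets of the postcritical data. Once this is in place, the expansion hypothesis drives the analytic convergence through Lemma~\ref{lem:liftpathshrinks}.
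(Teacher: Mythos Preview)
The paper does not prove this theorem; it is quoted there as \cite[Theorem~11.1]{BM17} without argument. Your proposal is essentially the standard proof given in that reference: iteratively lift the Thurston-equivalence isotopy through the expanding map to produce a sequence $\{h_n\}$ with $\widehat f\circ h_{n+1}=h_n\circ f$, use the visual-metric contraction of lifts (Lemma~\ref{lem:liftpathshrinks}) to show that the tracks of the lifted isotopies have diameters decaying like $\Lambda^{-n}$, conclude uniform convergence, and obtain bijectivity of the limit by running the same construction on the inverse equivalence. The outline is correct.

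One point worth making explicit if you write this out in full is the identification $g_n=h_n^{-1}$. From $\widehat f\circ h_{n+1}=h_n\circ f$ one obtains $f\circ h_{n+1}^{-1}=h_n^{-1}\circ \widehat f$, so $h_{n+1}^{-1}$ satisfies the relation required of $g_{n+1}$; what pins it down as \emph{the} lift produced by the reverse iteration is that inverting $\widehat f^{\,n}\circ H^{(n)}_t=H_t\circ f^n$ pointwise yields $f^n\circ (H^{(n)}_t)^{-1}=H_t^{-1}\circ \widehat f^{\,n}$, so $(H^{(n)}_t)^{-1}$ is the lifted isotopy starting at $g_n=h_n^{-1}$, and uniqueness of lifts gives $g_{n+1}=(H^{(n)}_1)^{-1}=h_{n+1}^{-1}$. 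The remaining technicalities you flag (extension of the lifted isotopy across $f^{-1}(\post(f))$, and that each lifted time-$t$ map is a genuine homeomorphism) are handled in \cite[Chapter~11]{BM17} along the lines you indicate.
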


This is \cite[Theorem~11.1]{BM17}.

\subsection{The Orbifold Associated with a Thurston Map}
\label{sec:parabolic-orbifolds}

We follow  \cite[Section 2.5]{BM17}. Let $f\: S^2\ra S^2$ be a Thurston map. 
For a given $p\in S^2$, we set
\begin{equation*}
  \alpha_f(p)=\lcm\{ \deg(f^n,q)
  :
  q\in S^2,\, n\in \N, \text{ and } f^n(q)=p\}. 
\end{equation*}
Here, $\lcm (M)\in \widehat\N\coloneqq \N\cup\{\infty\}$ denotes
the least common multiple of a set $M\sub \N$.  

Note that $\alpha_f(p) = \infty$ is possible. This is true if and only if
$p$ is contained in a critical cycle of $f$, i.e., $p$ is a fixed
point and a critical point of $f^n$ for some $n\in \N$. It
follows that $\alpha_f$ is {\em finite} (i.e., it does not take
the value $\infty$) if and only if $f$ has no periodic critical
points.  Note that, in general, an expanding Thurston map may have periodic critical
points 
(see \cite[Example~12.21]{BM17}), but not if it is rational (see \cite[Proposition~2.3]{BM17}).
One can also show that $\alpha_f(p) =1$ if and only if
$p\in S^2{\setminus }\post(f)$ (see \cite[Proposition 2.9]{BM17}).

The function $\alpha_f\colon S^2 \ra 
\widehat{\N}$ is called the \emph{ramification
  function} of $f$ and  $\OC_f= (S^2, \alpha_f)$ (i.e., the underlying $2$-sphere equipped with this ramification function)  the
\emph{orbifold} associated with $f$. The \emph{Euler
  characteristic} of $\OC_f$ is defined as
\begin{equation*}
  \chi(\OC_f) = 2 - \sum_{p\in S^2}
  \left( 1- \frac{1}{\alpha_f(p)} \right).
\end{equation*}
For a Thurston map $f$, we always have $\chi(\OC_f) \leq 0$
(see  \cite[Proposition 2.12]{BM17}).
We say that $f$ has a {\em parabolic orbifold} if 
$\chi(\OC_f) = 0$ and a {\em hyperbolic orbifold} if 
$\chi(\OC_f) <0$.

\subsection{Parabolic Orbifolds}
\label{sec:parabolic-orbifolds-1}

To give a more  precise classification of Thurston maps  with 
 parabolic orbifold, we consider the postcritical points $p_1,\dots,p_k$, $k\in \N$, of a Thurston map $f$ labeled
so  that 
$$\alpha_f(p_1) \leq \alpha_f(p_2)\leq\dots \le \alpha_f(p_k).$$ The $k$-tuple 
 $ (\alpha_f(p_1), \dots, \alpha_f(p_k))$
is called the \emph{signature} of $\OC_f$.

The orbifold $\OC_f$ associated with a Thurston map $f$
is parabolic, i.e., $\chi(\OC_f)=0$,
if and only
if the signature of  $\OC_f$ is in  the following list: \begin{equation}
\label{eq:list}
  (\infty,\infty), (2,2,\infty), (2,4,4), (2,3,6), (3,3,3), (2,2,2,2)  
\end{equation}
(see  \cite[Proposition 2.14]{BM17}). 
%The first corresponds to the map $z\mapsto z^k$ for a $k\in
%\Z\setminus\{-1,0,1\}$, the second to Chebyshev polynomials (up
%to sign).

It follows from the definition  of the ramification function $\alpha_f$ that
$\deg(f,p)\alpha_f(p)$ divides $\alpha_f(f(p))$ for all
$p\in S^2$ (see  \cite[Proposition 2.8~(ii)]{BM17}). One can show that  $\OC_f$ is parabolic if and only if
\begin{equation}\label{eq:para}
  \deg(f,p)\alpha_f(p) = \alpha_f(f(p))
\end{equation}
for all $p\in S^2$ (see \cite[Proposition 2.14]{BM17}). 

\subsection{Latt\`{e}s Maps}
\label{sec:lattes-maps} We follow the presentation in  \cite[Chapter 3]{BM17}.  The definition of a Latt\`{e}s map is based on the following fact
(this is essentially  well known; see  \cite{Mi06} and \cite[Theorem 3.1]{BM17}). 

\begin{theorem}[Characterization of Latt\`es maps]
  \label{thm:Lattesstruc}  Let $f\: \CDach \ra \CDach$ be
  a map. Then the following conditions are equivalent:  

\begin{enumerate}%[\upshape(i)]
 
%\smallskip 
 \item \label{item:Lattessrucii} $f$ is a rational Thurston map that
   has a parabolic orbifold and no  periodic critical points. 
    
 %\smallskip 
 \item \label{item:Lattessruciii} There exist a crystallographic
   group $G$, 
   a $G$-equivariant  holomorphic  map $A\: \C\ra \C$ of the form 
   $A(z)=\alpha z+\beta$, where $\alpha, \beta\in \C$, $\abs{\alpha}>
   1$, and a holomorphic map $\Theta\: \C\ra \CDach$ induced by 
   $G$, such that $f\circ \Theta=\Theta\circ A$. 
   
  % \smallskip
 \item 
   \label{item:Lattesruciv}
   There exist a complex torus $\T$, a holomorphic torus endomorphism
   $\overline{A} \colon \T\to \T$ with $\deg (\overline{A}) >1$, and a  non-constant 
   holomorphic map $\overline{\Theta} \colon \T \to \CDach$ such that
   $f\circ \overline{\Theta} = \overline{\Theta} \circ \overline{A}$. 
 \end{enumerate} 
\end{theorem}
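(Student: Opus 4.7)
The plan is to prove the equivalences in the cyclic order (iii)$\Rightarrow$(ii)$\Rightarrow$(i)$\Rightarrow$(iii); (ii) is essentially the universal-cover unfolding of (iii), while (i)$\Rightarrow$(iii) is the main content.

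For (iii)$\Rightarrow$(ii), I would lift to the universal cover $\pi\colon\C\to\T\cong\C/\Lambda$, where $\Lambda$ is the period lattice. The endomorphism $\overline{A}$ lifts to a holomorphic $A\colon\C\to\C$ with $A(z+v)-A(z)\in\Lambda$ for every $v\in\Lambda$, so $A'$ is a $\Lambda$-periodic entire function, hence constant by Liouville, giving $A(z)=\alpha z+\beta$; the identity $\deg(\overline{A})=\abs{\alpha}^2>1$ forces $\abs{\alpha}>1$. Setting $\Theta\coloneqq\overline{\Theta}\circ\pi$, I define $G$ to be the group of orientation-preserving Euclidean isometries $g$ of $\C$ satisfying $\Theta\circ g=\Theta$; this group contains $\Lambda$ with finite index, is discrete and cocompact (since $\C/G$ corresponds to the compact sphere with the orbifold structure of $\Theta$), hence crystallographic, and $\Theta$ is induced by $G$. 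The equivariance $A\circ g=g'\circ A$ with $g'\in G$ follows because $\Theta\circ(A\circ g)=f\circ\Theta=\Theta\circ A$, so $A\circ g$ and $A$ differ by an element of $G$.

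For (ii)$\Rightarrow$(i), holomorphicity of $f$ is automatic from local invertibility of $\Theta$ off its discrete critical set, so $f$ is rational. A Riemann--Hurwitz count on the diagram, using $\chi(\C/G)=0$ and the fact that the local degree of $\Theta$ at $z$ equals the stabilizer order $\abs{G_z}$ and matches $\alpha_f(\Theta(z))$, yields $\chi(\OC_f)=0$, postcritical finiteness, and $\deg(f)\geq 2$. For the absence of periodic critical points, I observe that $A$ expands Euclidean distances by $\abs{\alpha}>1$, so lifting shows that the family $\{f^n\}$ is nowhere normal on $\CDach$; hence $f$ has empty Fatou set, and thus no periodic critical point (whose immediate basin would lie in the Fatou set).

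The main obstacle is (i)$\Rightarrow$(iii). Given $f$ rational, postcritically finite, with parabolic $\OC_f$ and no periodic critical points (so $\alpha_f$ is finite-valued, with signature from \eqref{eq:list}), the key step is the uniformization of parabolic orbifolds: for each such signature there exists a holomorphic universal orbifold covering $\Theta\colon\C\to\CDach$ whose deck group is a crystallographic group $G$ and whose local degree at $z$ equals $\alpha_f(\Theta(z))$. Because $f$ preserves the ramification function via \eqref{eq:para}, the composition $f\circ\Theta$ has the same local ramification profile as $\Theta$, so the orbifold lifting theorem produces a holomorphic, unbranched lift $A\colon\C\to\C$ with $f\circ\Theta=\Theta\circ A$; since $\C$ is simply connected, $A$ is a homeomorphism. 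The $G$-equivariance of $A$ together with normality of the translation sublattice $\Lambda\coloneqq G\cap\{\text{translations}\}$ in $G$ gives $A(z+v)-A(z)\in\Lambda$ for $v\in\Lambda$, so $A'$ is $\Lambda$-periodic and constant by Liouville; thus $A(z)=\alpha z+\beta$, with $\abs{\alpha}>1$ forced by $\deg(f)\geq 2$. Setting $\T\coloneqq\C/\Lambda$ and descending $A$ and $\Theta$ through $\C\to\T$ then yields (iii). The delicate technical point is the case-by-case construction of $\Theta$ and $G$ for each of the six signatures in \eqref{eq:list}, realized via classical elliptic and Schwarz-triangle quotient constructions.
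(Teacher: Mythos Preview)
The paper does not actually prove this theorem: it is stated in Section~\ref{sec:lattes-maps} as a background result, with the remark that it ``is essentially well known; see \cite{Mi06} and \cite[Theorem~3.1]{BM17}.'' So there is no proof in the paper to compare against directly. The paper does, however, flag in the introduction that the hard direction is showing that a map as in~\eqref{eq:def_lattes3} has a parabolic orbifold, and that ``the argument uses the holomorphicity of $f$ in an essential way (see \cite{Mi06} and \cite[p.~64]{BM17}).''

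Your cyclic scheme (iii)$\Rightarrow$(ii)$\Rightarrow$(i)$\Rightarrow$(iii) and the individual steps match the standard treatment in those references. A few points deserve more care. In (iii)$\Rightarrow$(ii), you define $G$ as the orientation-preserving \emph{isometries} with $\Theta\circ g=\Theta$, but a priori deck transformations of the holomorphic map $\Theta$ are only biholomorphisms of $\C$, i.e., affine maps $g(z)=az+b$; you still need to argue $\abs{a}=1$ (this comes from the fact that $g$ must normalize the lattice $\Lambda$, forcing $a\Lambda=\Lambda$). In (ii)$\Rightarrow$(i), your Riemann--Hurwitz sketch and the identification $\deg(\Theta,z)=\abs{G_z}=\alpha_f(\Theta(z))$ are exactly the content the paper singles out as delicate; it is correct but is where the holomorphicity (classification of crystallographic groups and their point stabilizers) is genuinely used. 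Finally, in (i)$\Rightarrow$(iii) you correctly note that $\alpha_f$ is finite-valued, which already excludes the signatures $(\infty,\infty)$ and $(2,2,\infty)$; so only four signatures, not six, need the orbifold-uniformization construction you describe at the end.
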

Here a \emph{crystallographic group} $G$ is a subgroup of the
group of orien\-ta\-tion-preserving isometries of $\C$ that acts properly discontinuously and cocompactly on $\C$. In particular, 
 each
element $g\in G$ is a map of the form $g\colon z\in \C \mapsto
\alpha z + \beta$, where $\alpha,\beta\in \C$ with
$\abs{\alpha}=1$. Note that this definition of a  crystallographic group $G$ is more restrictive than usual, because we require that all elements $g\in G$ preserve orientation on $\C$.  These groups are completely classified (see \cite[Theorem 3.7]{BM17}).

A continuous map $\Theta\: \R^2 \to S^2$
is {\em induced} by a group $G$ of  homeo\-morphisms on $\R^2$ if, for $u,v\in \R^2$, we have 
$\Theta(u)=\Theta(v)$ if and only if there exists $g\in G$ such that $v=g(u)$. The reason for this terminology is that under some additional assumptions (for example, when $\Theta$ is surjective and open), there exists a homeomorphism between 
$S^2$ and the quotient space $\R^2/G$ such that $\Theta$ corresponds to the quotient map $\R^2\ra \R^2/G$ (see 
\cite[Corollary A.23]{BM17} for a precise statement along these
lines). 
Such maps
$\Theta$ are sometimes called {\em strongly $G$-automorphic} in the literature.

Finally, if $G$ is a group of  homeomorphisms  on $\R^2$ and 
 $A\: \R^2\ra \R^2$ is a homeomorphism, then $A$ is called 
 $G$-{\em equivariant} if 
$$ A\circ g \circ A^{-1} \in G$$
for each $g\in G$.

A map $f\colon \CDach \to \CDach$ on the Riemann sphere $ \CDach$
is a {\em Latt\`{e}s map} if one, hence each, of the conditions
in Theorem~\ref{thm:Lattesstruc} is satisfied. Such a map is
always expanding (see \cite[Proposition~2.3]{BM17}). Note that
condition~\ref{item:Lattesruciv} in this theorem was how we
introduced Latt\`es maps in the introduction.

In the following,  for $h\in \R^2\cong\C$, we denote by
$\tau_h\: \R^2\ra\R^2$ the translation  defined as 
\begin{equation} \label{eq:trans} 
\tau_h(u)=u+h, \quad \text{for $u\in \R^2$}.
\end{equation} 

The subgroup of all translations in a crystallographic $G$ is
denoted by $\Gtr$. One can show that
for each crystallographic $G$, there exists
a rank-$2$ lattice $\Gamma\sub \R^2$, such that 
$\Gtr=\{\tau_\ga: \ga\in \Gamma\} $. In particular, $\Gtr$ also acts cocompactly and properly discontinuously on $\R^2$. Moreover, 
the quotient space $\T=\C/\Gtr\cong \R^2/\Gamma$ is a torus carrying a natural complex structure, and it  is hence a complex torus.

%\begin{defthm}
%  \label{thm:Lattesstruc}  Let $f\: \CDach \ra \CDach$ be
%  a map. Then $f$ is called a \emph{Latt\`{e}s map} if one, hence
%  all, of the following equivalent definitions is satisfied. 
%
%\begin{enumerate}%[\upshape(i)]
% 
%%\smallskip 
% \item \label{item:Lattessrucii} $f$ is a rational Thurston map that
%   has a parabolic orbifold and no  periodic critical points. 
%  \index{parabolic!orbifold} 
%  \index{orbifold!parabolic} 
%  \index{Thurston map!parabolic}
%  \index{Thurston map!rational}
%    
% %\smallskip 
% \item \label{item:Lattessruciii} There exists a crystallographic
%   group $G$, 
%   a $G$-equivariant  holomorphic  map $A\: \C\ra \C$ of the form 
%   $A(z)=\alpha z+\beta$, where $\alpha, \beta\in \C$, $\abs{\alpha}>
%   1$, and a holomorphic map $\Theta\: \C\ra \CDach$ induced by 
%   $G$ such that $f\circ \Theta=\Theta\circ A$. 
%   
%  % \smallskip
% \item 
%   \label{item:Lattesruciv}
%   There exists  a complex torus $\T$, a holomorphic torus endomorphism
%   $\overline{A} \colon \T\to \T$ with $\deg (\overline{A}) >1$, and a  non-constant 
%   holomorphic map $\overline{\Theta} \colon \T \to \CDach$ such that
%   $f\circ \overline{\Theta} = \overline{\Theta} \circ \overline{A}$. 
% \end{enumerate} 
%\end{defthm}

If $f$ is a Latt\`es map, then one can always find $A$,
$\overline A$, $G$, $\overline \Theta$
as in Theorem~\ref{thm:Lattesstruc}, such that
we have the following commutative diagram (see \cite[(3.10)]{BM17}): 
\begin{equation}
  \label{eq:holom_qu_diagram}
  \xymatrix{
    \C \ar[r]^{A} \ar[d]_{\pi} \ar@/_2pc/[dd]_{\Theta} 
    & \C \ar[d]^{\pi}\ar@/^2pc/[dd]^{\Theta}
    \\
    \T \ar[r]^{\overline{A}}\ar[d]_{\overline{\Theta}} 
    & \T\ar[d]^{\overline{\Theta}}
    \\
    \CDach \ar[r]^{f} & \CDach\rlap{.}
  }
\end{equation}

Here $\pi\colon \C \to \C/\Gtr= \T$ is the quotient map which is the universal covering map of
the torus $\T$. As we will see, a topological analog of \eqref{eq:holom_qu_diagram} will be the starting point for the proof of Theorem~\ref{thm:quo}.

\subsection{Quotients of Torus Endomorphisms}
\label{sec:parabolicity}

We first record a precise definition for a quotient of a torus endomorphism.  As before, we will denote by $T^2$  a  $2$-dimensional topological torus. 
We call a branched  covering map $\overline A\: T^2\ra T^2$ 
a {\em torus endomorphism}. It easily follows from the Riemann-Hurwitz formula that $\overline A$ actually cannot have critical points, and so must be a (locally orientation-preserving) covering map. We can now give a precise definition of the most important concept in his paper.   

\begin{definition}[Quotients of torus endomorphisms]
  \label{def:quotient_torus}
  Let $f\colon S^2\to S^2$
  be a map on a $2$-sphere $S^2$ that satisfies
  the following
  condition: there exists a torus endomorphism
  $\overline A\: T^2\ra T^2$ with $\deg(\overline A)\ge 2$ and a
  branched covering map $\overline \Theta\: T^2\ra S^2$, such that
  $f\circ \overline \Theta= \overline\Theta\circ \overline A$.
  Then $f$ is called a {\em quotient of a torus endomorphism}.
  % Let $f\colon S^2\to S^2$ be a map on a $2$-sphere $S^2$
  % such that there exists a torus endomorphism
  % $\overline A\: T^2\ra T^2$ with $\deg(\overline A)\ge 2$, and a
  % branched covering map $\overline \Theta\: T^2\ra S^2$ such that
  % $f\circ \overline \Theta= \overline\Theta\circ \overline
  % A$.
  % Then $f$ is called a 
  % {\em quotient of a torus endomorphism}.
\end{definition}
In this case,  we have a commutative diagram as in \eqref{eq:Lattesdef}. 

The following statement summarizes some facts about these 
maps (see \cite[Lemmas 3.12 and 3.13]{BM17}).

\begin{lemma}[Properties of quotients of torus endomorphisms]
  \label{lem:simobserv} 
  Let $f\: S^2\ra S^2$ be a quotient
  of a torus endomorphism, and $\overline \Theta\: T^2\ra S^2$ and 
  $\overline{A}\colon T^2\to T^2$ with $\deg(\overline A)\ge 2$ be as in
 Definition~\ref{def:quotient_torus}. Then the following statements are true:

\begin{enumerate}%[\upshape(i)]

%\smallskip 
 \item \label{item:simobi}
 The map $f$ is a Thurston map without periodic critical points,
 and it satisfies $\deg(f)=\deg(\overline A)\ge 2$. 
 
 %\smallskip 
 \item \label{item:simobii}
   The set $\post(f)$ is equal to the set of critical values of
   $\overline \Theta$, i.e., 
   \begin{equation*}
     \post(f) = \overline{\Theta}(\crit(\overline{\Theta})).     
   \end{equation*}

 %\smallskip 
 \item 
   \label{item:simobiii}
     $f$ has a  parabolic orbifold if  
and only if   \begin{equation*}
\deg(\overline \Theta, x)=\deg(\overline \Theta, y)\end{equation*} 
for all $x,y\in T^2$ with $\overline \Theta(x)=
\overline \Theta(y)$. 
\end{enumerate} 
\end{lemma}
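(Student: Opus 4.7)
The main tool I would use is the local-degree identity obtained from $f\circ\overline{\Theta}=\overline{\Theta}\circ\overline{A}$, combined with the multiplicativity of local degrees \eqref{eq:locdegmult} and the fact that the torus endomorphism $\overline{A}$ is an unbranched covering map (so $\deg(\overline{A},\cdot)\equiv 1$): for every $q\in T^2$ and $n\in\N$,
\begin{equation}\label{eq:keydeg_plan}
\deg(f^n,\overline{\Theta}(q))\cdot\deg(\overline{\Theta},q)=\deg(\overline{\Theta},\overline{A}^n(q)).
\end{equation}
Set $\gamma\coloneqq\deg(\overline{\Theta},\cdot)$; since $\crit(\overline{\Theta})$ is discrete in the compact surface $T^2$, it is finite, so $\gamma$ is bounded on $T^2$ by some $M<\infty$.

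For \ref{item:simobi}, two applications of Lemma~\ref{lem:2_3_branched} (first to $\overline{\Theta}\circ\overline{A}$ to conclude that $f\circ\overline{\Theta}$ is a branched covering map, then reading $f\circ\overline{\Theta}$ as a composition with $\overline{\Theta}$) show that $f$ is a branched covering map, and multiplicativity of topological degrees forces $\deg(f)=\deg(\overline{A})\ge 2$. For the absence of periodic critical points: if $p$ were a periodic critical point of period $m$, then $\deg(f^{km},p)\ge 2^k\to\infty$, yet \eqref{eq:keydeg_plan} (applied to any $q\in\overline{\Theta}^{-1}(p)$) bounds $\deg(f^n,p)$ by $M$, a contradiction. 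Finiteness of $\post(f)$ will then follow from \ref{item:simobii}, completing the Thurston-map assertion.

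Part \ref{item:simobii} falls straight out of \eqref{eq:keydeg_plan}. For $\post(f)\subset\overline{\Theta}(\crit(\overline{\Theta}))$: if $c\in\crit(f)$ and $q\in\overline{\Theta}^{-1}(c)$, then $\gamma(\overline{A}^n(q))\ge\deg(f^n,c)\ge 2$, so $f^n(c)=\overline{\Theta}(\overline{A}^n(q))\in\overline{\Theta}(\crit(\overline{\Theta}))$. For the converse, given $x\in\crit(\overline{\Theta})$, pick $n$ with $\deg(\overline{A})^n>\#\crit(\overline{\Theta})$ and $y\in\overline{A}^{-n}(x)\setminus\crit(\overline{\Theta})$; by \eqref{eq:keydeg_plan}, $\deg(f^n,\overline{\Theta}(y))=\gamma(x)\ge 2$, so a critical point of $f$ must occur among $\overline{\Theta}(y),f(\overline{\Theta}(y)),\dots,f^{n-1}(\overline{\Theta}(y))$, and its forward image under the appropriate iterate of $f$ is $\overline{\Theta}(x)\in\post(f)$. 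The same lifting trick, applied to an arbitrary $q\in T^2$ rather than just to a critical point, yields the key divisibility
\begin{equation}\label{eq:gammadivalpha_plan}
\gamma(q)\mid\alpha_f(\overline{\Theta}(q))\quad\text{for every } q\in T^2.
\end{equation}

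For \ref{item:simobiii}, the plan is to apply the Riemann--Hurwitz formula to the branched cover $\overline{\Theta}\colon T^2\to S^2$, which I expect to give both directions in one stroke. Writing $d\coloneqq\deg(\overline{\Theta})$ and $k_p\coloneqq\#\overline{\Theta}^{-1}(p)$, and noting that the Riemann--Hurwitz contributions are supported on $\post(f)$ by \ref{item:simobii}, one gets
\[
\sum_{p\in\post(f)}\!\Bigl(1-\tfrac{k_p}{d}\Bigr)=2.
\]
On the other hand, $\sum_{q\in\overline{\Theta}^{-1}(p)}\gamma(q)=d$, and by \eqref{eq:gammadivalpha_plan} each $\gamma(q)\le\alpha_f(p)$, whence $d\le k_p\alpha_f(p)$, i.e.\ $1/\alpha_f(p)\le k_p/d$, with equality if and only if $\gamma(q)=\alpha_f(p)$ for every $q$ in the fiber (equivalently, $\gamma$ is constant on the fiber). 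Subtracting these identities from the definition of $\chi(\OC_f)$,
\[
\chi(\OC_f)=\sum_{p\in\post(f)}\!\Bigl(\tfrac{1}{\alpha_f(p)}-\tfrac{k_p}{d}\Bigr)\le 0,
\]
with equality exactly when $\gamma$ is constant on every fiber of $\overline{\Theta}$. The main obstacle I anticipate is making the divisibility \eqref{eq:gammadivalpha_plan} watertight, since it rests on the counting argument from \ref{item:simobii} together with the iterated identity \eqref{eq:keydeg_plan}; everything else is bookkeeping with Riemann--Hurwitz.
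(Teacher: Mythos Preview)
The paper does not actually prove this lemma; it simply cites \cite[Lemmas~3.12 and~3.13]{BM17}. So there is no in-paper argument to compare against, and your task reduces to checking that your outline is sound.

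Your argument is essentially correct, and the Riemann--Hurwitz computation in~\ref{item:simobiii} is the natural route. Parts~\ref{item:simobi} and~\ref{item:simobii} are fine as written. There is one point in~\ref{item:simobiii} that you should make explicit. You correctly deduce that $\chi(\OC_f)=0$ holds if and only if $\gamma(q)=\alpha_f(p)$ for every $p$ and every $q\in\overline{\Theta}^{-1}(p)$. You then assert parenthetically that this is equivalent to $\gamma$ being constant on each fiber. One direction is trivial; the other is not a fiber-by-fiber statement and needs a short argument. Suppose $\gamma$ is constant on every fiber, say $\gamma\equiv c_p$ on $\overline{\Theta}^{-1}(p)$. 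You already know $c_p\mid\alpha_f(p)$ from~\eqref{eq:gammadivalpha_plan}. For the reverse divisibility, take any $z\in S^2$ and $n\in\N$ with $f^n(z)=p$, choose $q\in\overline{\Theta}^{-1}(z)$, and apply~\eqref{eq:keydeg_plan}: since $\overline{A}^n(q)\in\overline{\Theta}^{-1}(p)$, you get $\deg(f^n,z)=c_p/c_z$, which divides $c_p$. Taking the least common multiple over all such $z,n$ gives $\alpha_f(p)\mid c_p$, hence $c_p=\alpha_f(p)$. With this step in place, your equivalence is complete and the proof goes through.
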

This last parabolicity criterion will be important for us. The
condition stipulates that
the local degree $\deg(\overline \Theta, \cdot)$ is constant on the
fiber $\overline \Theta^{-1}(p)$ 
for each $p\in S^2$. 

\subsection{Latt\`{e}s-Type Maps}
\label{sec:lattes-type-maps-1} A map $L\:\R^2 \ra \R^2$
is called {\em $\R$-linear} if
\begin{equation*}
  L(x+y)
  =
  L(x)+L(y)
  \quad\text{and}\quad
  L(\lambda x)=\lambda L(x)
\end{equation*}
for all $x,y\in \R^2$ and  $\lambda\in \R$. In other words, an $\R$-linear map $L\:\R^2 \ra \R^2$ is a linear map on $\R^2$ considered as a vector space over $\R$. We write $\det(L)\in \R$ for the determinant of $L$. 
 
A map  $A\: \R^2\ra \R^2$ is called {\em affine} if it can be represented  in the form
$$ A (u)=L_A(u)+a, \quad u\in \R^2,$$
where  $a\in \R^2$ and $L_A\: \R^2\ra \R^2$ is $\R$-linear.
The map $L_A$ is uniquely determined by $A$ and called  the {\em linear part} of $A$. 

We can now give a precise definition of a {\em Latt\`es-type map}.

\begin{definition}[Latt\`es-type maps]
  \label{def:Lattestype}
  Let $f\:S^2\ra S^2$ be a map,
  such that there exist a crystallographic group $G$, an affine
  map $A\: \R^2\ra \R^2$ with $\det(L_A)>1$ that is
  $G$-equivariant, and a branched covering  map $\Theta\: \R^2\ra
  S^2$ induced by $G$ such that  $f\circ \Theta=\Theta\circ
  A$. Then $f$ is called a  {\em Latt\`es-type map}.
\end{definition} 

Note that then we have a commutative diagram as in
\eqref{eq:def_lattes_type}.  It follows from condition~\ref{item:Lattessruciii} in Theorem~\ref{thm:Lattesstruc} that every Latt\`es map is also of Latt\`es-type. 

Latt\`es-type maps are natural non-holomorphic analogs of
Latt\`es maps.
In this context, we usually write
$\R^2$
instead of $\C$ for the plane, to emphasize 
that we do not rely on a  complex
structure.

If $f\:S^2 \ra S^2$ is a Latt\`es-type map,  
$G$ is a crystallographic group, and $\Theta\: \R^2\ra S^2$ is induced by $G$ as in Definition~\ref{def:Lattestype}, then $G$  is necessarily
of {\em non-torus type}, meaning that $G$ is not isomorphic to the rank-2 lattice $\Z^2$. This implies that there is a natural identification  
$\R^2/G\cong S^2$ of the quotient space $\R^2/G$ with the
underlying $2$-sphere $S^2$.
Under this identification, $\Theta$
corresponds to the quotient map $\R^2\to \R^2/G$ (see \cite[Section 3.4]{BM17}).  

%\begin{definition}[Latt\`es-type maps]
%  \label{def:Lattestype}
%  \index{Latt\`{e}s-type map} 
%%  Suppose  $f\:S^2\ra S^2$ is a map
%  Let $f\:S^2\ra S^2$ be a map
%  such that there exists a crystallographic group $G$, an affine
%  map $A\: \R^2\ra \R^2$ with $\det(L_A)>1$ that is
%  $G$-equivariant, and a branched covering  map $\Theta\: \R^2\ra
%  S^2$ induced by $G$ such that  $f\circ \Theta=\Theta\circ
%  A$. Then $f$ is called a  {\em Latt\`es-type map}.
%\end{definition} 
%So Latt\`es-type maps are given as in
%\eqref{eq:holom_qu_diagram}, where $A$ is affine. The maps
%$\overline{A}$ and $f$ will not be holomorphic in general.  It is
%clear that every Latt\`es map belongs to this class. 

In the following, we summarize some facts about these
maps.
Note first, that if $f$
is a Latt\`es-type map, and $A$ is
as in Definition~\ref{def:Lattestype} with its linear part $L_A$,
then $\det(L_A)=\deg(f)\ge 2$ (see \cite[Lemma 3.16]{BM17}).
This is the underlying reason
for the requirement $\det(L_A)>1$ in
Definition~\ref{def:Lattestype}.

Some of the relations between Latt\`{e}s-type maps, quotients of torus
endomorphisms, and Thurston maps with parabolic orbifold are  covered by the
following two results. 

\begin{prop}
  \label{prop:immediate}
  Every Latt\`es-type map $f\: S^2\ra S^2$  is a quotient of a
  torus endomorphism and hence a Thurston map. It has a parabolic
  orbifold and no periodic critical points.
\end{prop}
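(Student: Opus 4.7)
The plan is to insert the torus $T^2\coloneqq\R^2/\Gtr$ between $\R^2$ and $S^2$ in the defining diagram~\eqref{eq:def_lattes_type}, thereby reproducing the structure of the Latt\`es diagram~\eqref{eq:holom_qu_diagram}, and then to verify the parabolicity criterion of Lemma~\ref{lem:simobserv}~\ref{item:simobiii}. Let $G$, $A$, $\Theta$ be as in Definition~\ref{def:Lattestype}, let $\Gtr\le G$ be the translation subgroup with associated lattice $\Gamma\subset\R^2$, and let $\pi\colon\R^2\to T^2$ be the quotient map, which is a covering. The key algebraic identity is that for any affine map $A$ with linear part $L_A$ and any translation $\tau_\gamma$ we have $A\circ\tau_\gamma\circ A^{-1}=\tau_{L_A(\gamma)}$. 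Combined with the $G$-equivariance of $A$, this forces $\tau_{L_A(\gamma)}\in G$, hence $\tau_{L_A(\gamma)}\in\Gtr$, so that $L_A(\Gamma)\subset\Gamma$. Consequently $A$ descends to a continuous self-map $\overline A\colon T^2\to T^2$ with $\pi\circ A=\overline A\circ\pi$; since $A$ is a homeomorphism of $\R^2$, $\overline A$ is a covering map of degree $[\Gamma:L_A(\Gamma)]=\det(L_A)\ge 2$. Similarly, since $\Theta$ is induced by $G$ and $\Gtr\le G$, we have $\Theta\circ\tau_\gamma=\Theta$ for every $\gamma\in\Gamma$, so $\Theta$ factors uniquely as $\Theta=\overline\Theta\circ\pi$ for some continuous $\overline\Theta\colon T^2\to S^2$. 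Applying Lemma~\ref{lem:2_3_branched}~\ref{item:2_out_3_2} to the branched covers $\Theta$ and $\pi$ promotes $\overline\Theta$ to a branched covering map. The relation $f\circ\Theta=\Theta\circ A$ then descends to $f\circ\overline\Theta=\overline\Theta\circ\overline A$, exhibiting $f$ as a quotient of a torus endomorphism; Lemma~\ref{lem:simobserv}~\ref{item:simobi} immediately yields that $f$ is a Thurston map without periodic critical points.

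For the parabolic orbifold, I will verify the criterion of Lemma~\ref{lem:simobserv}~\ref{item:simobiii}, namely that $\deg(\overline\Theta,\cdot)$ is constant on each fiber of $\overline\Theta$. Given $x,y\in T^2$ with $\overline\Theta(x)=\overline\Theta(y)$, choose lifts $\tilde x,\tilde y\in\R^2$ under $\pi$; then $\Theta(\tilde x)=\Theta(\tilde y)$. Since $\Theta$ is induced by $G$, there exists $g\in G$ with $\tilde y=g(\tilde x)$, and by the definition of ``induced by'' one has $\Theta\circ g=\Theta$. As $g$ is an orientation-preserving homeomorphism (indeed an isometry of $\R^2$), multiplicativity of local degrees applied to $\Theta=\Theta\circ g$ gives $\deg(\Theta,\tilde x)=\deg(\Theta,\tilde y)$. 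Because $\pi$ is an unbranched cover, its local degrees are identically $1$, so from $\Theta=\overline\Theta\circ\pi$ we conclude $\deg(\overline\Theta,x)=\deg(\Theta,\tilde x)=\deg(\Theta,\tilde y)=\deg(\overline\Theta,y)$, as required.

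The only step that I expect to require genuine care is the promotion of $\overline\Theta$ from a continuous factorization to an honest branched covering map, which is precisely what Lemma~\ref{lem:2_3_branched} delivers; this same fact then legitimizes the local-degree bookkeeping used in the parabolicity argument. Everything else is a formal unwinding of $G$-equivariance, driven by the single conjugation identity $A\circ\tau_\gamma\circ A^{-1}=\tau_{L_A(\gamma)}$ valid for affine $A$. In particular, no dynamical, analytic, or expansion hypothesis enters, which is consistent with Proposition~\ref{prop:immediate} being a purely structural statement about the Latt\`es-type data.
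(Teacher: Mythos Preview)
Your proof is correct. Note that the paper itself does not prove this proposition but merely cites \cite[Proposition~3.5]{BM17}; your argument---factoring through $T^2=\R^2/\Gtr$ via the conjugation identity $A\circ\tau_\gamma\circ A^{-1}=\tau_{L_A(\gamma)}$, invoking Lemma~\ref{lem:2_3_branched}~\ref{item:2_out_3_2} to make $\overline\Theta$ a branched cover, and then using that $G$ acts transitively on fibers of $\Theta$ to verify the parabolicity criterion of Lemma~\ref{lem:simobserv}~\ref{item:simobiii}---is precisely the natural one and matches the argument given there. One small remark: when you write $\deg(\overline A)=[\Gamma:L_A(\Gamma)]=\det(L_A)\ge 2$, the passage from $\det(L_A)>1$ to $\det(L_A)\ge 2$ is justified because $L_A(\Gamma)\subset\Gamma$ forces $\det(L_A)$ to be a positive integer, but you might make this explicit.
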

This is \cite[Proposition~3.5]{BM17}. 

\begin{prop}
  \label{prop:fpara_Latt_type}
  A Thurston map $f\: S^2\ra S^2$ is Thurston equivalent to a
  Latt\`{e}s-type map  if and only if it has
  parabolic orbifold and no periodic critical points. 
\end{prop}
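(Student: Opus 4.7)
The plan is to handle the two implications separately.

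\emph{Forward direction.} Suppose $f$ is Thurston equivalent to a Latt\`es-type map $g$. By Proposition~\ref{prop:immediate}, $g$ is a Thurston map with parabolic orbifold and no periodic critical points. Thurston equivalence bijects the critical and postcritical orbits of $f$ and $g$ while preserving local degrees, so the ramification functions $\alpha_f$ and $\alpha_g$ agree up to the associated homeomorphism; hence $\chi(\OC_f)=\chi(\OC_g)=0$, and a periodic critical point of $f$ would produce one of $g$, a contradiction.

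\emph{Backward direction.} Assume $f$ has parabolic orbifold and no periodic critical points. The finiteness of $\alpha_f$ excludes the signatures $(\infty,\infty)$ and $(2,2,\infty)$ from the list~\eqref{eq:list}, so the signature of $\OC_f$ is one of $(2,4,4)$, $(2,3,6)$, $(3,3,3)$, $(2,2,2,2)$. Each such signature is realized by a (unique up to conjugacy) non-torus type crystallographic group $G$ together with an induced branched covering map $\Theta\:\R^2\ra S^2\cong \R^2/G$ whose local degrees realize $\alpha_f$; after post-composing with a self-homeomorphism of $S^2$ we may assume the critical values of $\Theta$ are exactly $\post(f)$. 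The parabolicity identity~\eqref{eq:para} says exactly that $f$ is an orbifold self-covering of $\OC_f$, so it lifts through $\Theta$ to a continuous $\widetilde f\:\R^2\ra \R^2$ with $\Theta\circ\widetilde f=f\circ\Theta$, and this lift is $G$-equivariant: there is an injective homomorphism $\phi\:G\ra G$ with $\widetilde f\circ g=\phi(g)\circ \widetilde f$ for every $g\in G$.

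The main obstacle is to replace $\widetilde f$ by an affine map without disturbing the Thurston class of the projected map. The homomorphism $\phi$ nearly determines the desired affine $A$: restricting $\phi$ to $\Gtr\cong \Z^2$ gives $\phi(\tau_\gamma)=\tau_{L\gamma}$ for a unique $\Z$-linear map $L\:\Gtr\ra\Gtr$, which extends $\R$-linearly to $L\:\R^2\ra\R^2$. Because $\Gtr$ is normal in $G$ and $\phi$ preserves $G$, the extension $L$ automatically conjugates the rotation part of $G$ into itself, which is precisely what is needed for $G$-equivariance of an affine map with linear part $L$. Choosing the translation $a\in\R^2$ so that $A(u)\coloneqq L(u)+a$ matches $\widetilde f$ at a single orbifold-special point of $G$ makes $A$ a $G$-equivariant affine map; a local-degree count gives $\det(L)=\deg(f)\ge 2$, and $g\circ\Theta\coloneqq \Theta\circ A$ then defines a Latt\`es-type map $g\:S^2\ra S^2$. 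Finally, since $\widetilde f$ and $A$ are both $G$-equivariant with the same $\phi$, both carry $\Theta^{-1}(\post(f))$ into itself in compatible ways, and $\R^2$ is contractible, one constructs a $G$-equivariant isotopy between them rel $\Theta^{-1}(\post(f))$; pushing this isotopy down through $\Theta$ yields homeomorphisms $h_0,h_1\:S^2\ra S^2$ isotopic rel $\post(f)$ with $h_0\circ f=g\circ h_1$. The delicate technical point, and where the actual work lies, is the construction of this $G$-equivariant isotopy so that it simultaneously respects the postcritical preimage data and the full crystallographic symmetry.
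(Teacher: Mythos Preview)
The paper does not actually prove this proposition: immediately after stating it, the text simply records ``This is \cite[Proposition~3.6]{BM17}'' and moves on. So there is no in-paper argument to compare your proposal against. Your outline is, in fact, essentially the lifting-through-the-universal-orbifold-cover strategy carried out in the cited reference; the present paper even alludes to the content of that argument later, in the proof of Proposition~\ref{prop:para_Lattes}, when it remarks that ``the proof of Proposition~\ref{prop:fpara_Latt_type} (see \cite[pp.~74--77]{BM17}) shows that \dots\ the affine map $\widehat A$ \dots\ can be chosen so that its linear part $L_{\widehat A}$ is equal to the map $L$ as in \eqref{eq:ALtauga}.'' This is exactly your construction of $L$ from $\phi|_{\Gtr}$.

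A few places in your sketch would need tightening to become a proof. First, you should say explicitly that the lift $\widetilde f$ is a \emph{homeomorphism} of $\R^2$; this is what makes $\phi(g)=\widetilde f\circ g\circ \widetilde f^{-1}$ a well-defined injective endomorphism of $G$, and it is not automatic from ``continuous lift''. Second, the assertion that a single choice of the translation part $a$ makes $A(u)=L(u)+a$ $G$-equivariant with the \emph{same} homomorphism $\phi$ as $\widetilde f$ requires checking compatibility with the rotational generators of $G$, not only with $\Gtr$; matching at one special point is the right idea, but one must verify that the finitely many rotation constraints are simultaneously satisfiable. Third, as you yourself flag, the $G$-equivariant isotopy rel $\Theta^{-1}(\post(f))$ is where the genuine work lies, and nothing in your sketch yet supplies it. None of these are wrong turns; they are precisely the steps on which \cite[pp.~74--77]{BM17} expends its effort.
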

This is \cite[Proposition~3.6]{BM17}.
Therefore, by \eqref{eq:list}, the signature of the
orbifold of a Latt\`{e}s-type map is in the list
\begin{equation}
  \label{eq:sign_Latt_type}
  (2,4,4), (2,3,6), (3,3,3), (2,2,2,2).
\end{equation}
 Only the last signature leads to maps that are 
genuinely different from Latt\`{e}s maps.
\begin{prop}
  \label{prop:Latt_Latt_type}
  A Latt\`{e}s-type map $f\colon S^2\to S^2$ whose orbifold has
  signature $(2,4,4)$, $(2,3,6)$, or $(3,3,3)$ is topologically
  conjugate to a Latt\`{e}s map. 
\end{prop}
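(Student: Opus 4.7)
The plan is to show that under the stated signature assumption, the affine map $A$ in the defining diagram $f\circ \Theta=\Theta\circ A$ must in fact be complex-affine; once this is established, we can transport the standard complex structure from $\C$ to $S^2$ via $\Theta$ to realize $f$ as a rational map on a suitable identification of $S^2$ with $\CDach$. The resulting rational map will then be a Latt\`es map by condition~\ref{item:Lattessruciii} of Theorem~\ref{thm:Lattesstruc}, and the identification homeomorphism will furnish the topological conjugacy.

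\emph{Step 1: $A$ is complex-affine.} For each of the three signatures $(2,4,4)$, $(2,3,6)$, $(3,3,3)$, the crystallographic group $G$ contains a rotation $r$ of order $n\ge 3$ (of order $4$, $6$, $3$, respectively). By $G$-equivariance, $ArA^{-1}\in G$, so taking linear parts yields an element $L_A L_r L_A^{-1}\in SO(2)$ that is itself a rotation of order $n$. The only such rotations in $SO(2)$ are $L_r$ and $L_r^{-1}$, and a direct matrix computation shows that the equation $L_A L_r L_A^{-1}=L_r^{-1}$ would force $\det(L_A)<0$; since $\det(L_A)>1$, we conclude that $L_A$ commutes with $L_r$. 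For $n\ge 3$, the centralizer of $L_r$ in $GL_2(\R)$ is exactly the group of complex-linear maps $z\mapsto \alpha z$, so $A$ has the form $A(z)=\alpha z+\beta$ with $|\alpha|^2=\det(L_A)>1$, in particular $|\alpha|>1$.

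\emph{Step 2: Transporting the complex structure.} Identifying $\R^2\cong \C$, Step~1 shows that $A$ is holomorphic; each element of $G$ likewise has complex-linear part of unit modulus, hence $G$ acts by holomorphic isometries. The standard complex structure on $\C$ therefore descends to the complement in $S^2$ of the (finite) set of images under $\Theta$ of the fixed points of non-trivial elements of $G$. At each of these cone points, the stabilizer in $G$ is a finite cyclic rotation group, so $\Theta$ is locally modeled on $z\mapsto z^k$; the complex structure extends across the cone point by the Riemann removable singularities theorem. This realizes $S^2$ as a compact Riemann surface of genus $0$, hence biholomorphic to $\CDach$ via some homeomorphism $h\colon S^2\to \CDach$. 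Setting $\widetilde\Theta\coloneqq h\circ\Theta$ and $\widetilde f\coloneqq h\circ f\circ h^{-1}$, we obtain $\widetilde f\circ\widetilde\Theta=\widetilde\Theta\circ A$ with $\widetilde\Theta$ holomorphic. Off the finite critical-value set of $\widetilde\Theta$, the map $\widetilde f$ can be written locally as $\widetilde\Theta\circ A\circ \widetilde\Theta^{-1}$ and is therefore holomorphic; continuity plus removable singularities extends holomorphicity to all of $\CDach$, so $\widetilde f$ is a rational map. By condition~\ref{item:Lattessruciii} of Theorem~\ref{thm:Lattesstruc}, $\widetilde f$ is a Latt\`es map, and by construction $h$ conjugates $f$ to $\widetilde f$.

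The main obstacle I expect is the careful justification in Step~2 that the pushed-forward complex structure on $S^2$ is well-defined across the cone points and that $\widetilde f$ extends holomorphically across the postcritical set of $\widetilde\Theta$; both are standard removable-singularity arguments, but they rely crucially on the specific local form $z\mapsto z^k$ at branch points, which is guaranteed by the rotational nature of the $G$-action near each of its fixed points. The exclusion of signature $(2,2,2,2)$ in the hypothesis is essential precisely because its point group $\Z/2$ only contains $\pm I$, whose centralizer in $GL_2(\R)$ is all of $GL_2(\R)$, so Step~1 fails in that case.
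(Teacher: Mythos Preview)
The paper does not actually prove this proposition; it simply records it as \cite[Proposition~3.18]{BM17} and moves on. So there is no ``paper's own proof'' to compare against. That said, your argument is correct and is essentially the standard one (and is, in fact, the argument given in \cite{BM17}): the key observation is exactly your Step~1, namely that for the three signatures in question the point group of $G$ is cyclic of order $n\ge 3$, and $G$-equivariance of $A$ together with $\det(L_A)>0$ forces $L_A$ to commute with a rotation of order $n\ge 3$, hence to be complex-linear. Your computation that $L_A L_r L_A^{-1}=L_r^{-1}$ would force $\det(L_A)<0$ is correct (writing $L_A(z)=\alpha z+\beta\bar z$, the relation $L_A R_\theta = R_{-\theta}L_A$ with $\sin\theta\ne 0$ forces $\alpha=0$). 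Your Step~2 is also fine; the passage from the $G$-invariant complex structure on $\C$ to one on $S^2\cong \C/G$ via the local model $z\mapsto z^k$ at cone points is the standard construction, and the extension of holomorphicity of $\widetilde f$ across the finite postcritical set is a genuine removable-singularities argument since $\widetilde f$ is continuous there.

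One small remark: note that your Step~1 already shows $L_A$ has both eigenvalues of modulus $|\alpha|>1$, so $f$ is automatically expanding by Proposition~\ref{prop:Lattes_type_exp}. This means an alternative route is available, closer in spirit to what the paper does later in Proposition~\ref{prop:para_Lattes}: since $f$ has exactly three postcritical points it is Thurston equivalent to a rational map $R$, which must be a Latt\`es map; then expansion of both $f$ and $R$ upgrades Thurston equivalence to topological conjugacy via Theorem~\ref{thm:Th_equiv_top_conj}. Your direct approach is more elementary because it avoids the (nontrivial) input that every Thurston map with three postcritical points is equivalent to a rational map.
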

This is  \cite[Proposition~3.18]{BM17}. A Latt\`{e}s-type map
with orbifold signature $(2,2,2,2)$ is in general not
topologically conjugate (or Thurston equivalent) to a Latt\`{e}s
map (see \cite[Theorem~3.22]{BM17} and
\cite[Proposition~9.7]{DH}).   These Latt\`{e}s-type maps  are examples of 
{\em nearly Euclidean Thurston maps};  see \cite{CFPP} for the
definition.

A Latt\`{e}s-type map is not necessarily expanding as a Thurston map
(see \cite[Example~6.15]{BM17}).
To record a criterion
%In order to record a criterion
for this, we call  an $\R$-linear map
 $L\colon \R^2\to \R^2$
 {\em expanding} if $|\lambda|>1$ for each of the
(possibly complex) roots $\lambda$ of the characteristic polynomial 
$P_L(z)\coloneqq  \det(L-z\id_{\R^2})$   of $L$. 

\begin{prop}
  \label{prop:Lattes_type_exp}
  Let $f\colon S^2\to S^2$ be a Latt\`{e}s-type map and $L$ be
  the linear part of an affine  map $A$ as in 
  Definition~\ref{def:Lattestype}. Then $f$ is expanding (as a Thurston map) if and
  only if $L$ is expanding (as a linear map). 
\end{prop}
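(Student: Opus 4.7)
The plan is to factor the diagram \eqref{eq:def_lattes_type} through the intermediate torus $\T\coloneqq \R^2/\Gtr$, where $\Gtr$ is the translation subgroup of $G$. Since $A$ is $G$-equivariant, it is in particular $\Gtr$-equivariant (the $G$-equivariance forces $L\Gamma\sub \Gamma$ for the lattice $\Gamma$ defining $\Gtr$), so $A$ descends to a torus endomorphism $\overline A\:\T\ra\T$, and $\Theta$ factors as $\Theta=\overline\Theta\circ\pi$, where $\pi\:\R^2\ra\T$ is the universal covering and $\overline\Theta\:\T\ra S^2$ is induced by the finite group $G/\Gtr$. The main auxiliary fact is that $L$ is expanding as a linear map if and only if $\overline A$ is topologically expanding on $\T$ (in the sense that lifts of short paths under $\overline A^n$ have diameters tending to $0$ uniformly): this is because $\pi$ is a local isometry, so such lifts on $\T$ correspond to preimages in $\R^2$ under $A^n$, whose diameters are scaled by $\|L^{-n}\|$, which tends to $0$ precisely when all eigenvalues of $L$ have modulus $>1$.

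For the implication ``$L$ expanding $\Rightarrow$ $f$ expanding'', pick a Jordan curve $\CC\sub S^2$ with $\post(f)\sub \CC$ and set $\overline\CC\coloneqq\overline\Theta^{-1}(\CC)$. Since $\post(f)$ contains the critical values of $\overline\Theta$ (Lemma~\ref{lem:simobserv}~\ref{item:simobii} applied to the factorization $f\circ\overline\Theta=\overline\Theta\circ\overline A$), the map $\overline\Theta$ restricts to an unbranched covering from $\T\setminus\overline\CC$ onto $S^2\setminus\CC$. By compactness of $\T$, $\T\setminus\overline\CC$ has only finitely many components, of uniformly bounded diameter. Each component of $\T\setminus\overline A^{-n}(\overline\CC)$ is of the form $\overline A^{-n}(\widetilde V)$ for some such component $\widetilde V$, hence has diameter at most $C\|L^{-n}\|\ra 0$. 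Since $\overline\Theta$ is $1$-Lipschitz (for the quotient metric from $\T$), the components of $S^2\setminus f^{-n}(\CC)$ also shrink uniformly, so $\mesh(f,n,\CC)\ra 0$ and $f$ is expanding.

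For the converse, assume $L$ is not expanding and $f$ is expanding, aiming for a contradiction. Pick a (generalized) eigenvector $v\ne 0$ of $L$ with eigenvalue $|\lambda|\le 1$, so $\|L^{-n}(v)\|\ge\|v\|$ for all $n$, and scale $v$ so that the path $\alpha\coloneqq \Theta\circ \beta$ with $\beta(t)\coloneqq tv$, $t\in[0,1]$, satisfies $\diam_\varrho(\alpha)<\delta_\varrho$ for a visual metric $\varrho$ of $f$. Then $\widetilde\alpha_n\coloneqq \Theta\circ A^{-n}\circ \beta$ is a lift of $\alpha$ under $f^n$, so Lemma~\ref{lem:liftpathshrinks} would force $\diam_\varrho(\widetilde\alpha_n)\ra 0$. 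To rule this out, I examine $\pi(A^{-n}(\beta))$ on $\T$, which is a flat geodesic segment of length $\|L^{-n}(v)\|\ge\|v\|$: if $|\lambda|=1$, these segments have bounded length, and a compactness argument (translate by $\Gtr$ to bring starting points into a fixed fundamental domain and extract a convergent subsequence) produces a non-degenerate Hausdorff limit whose $\overline\Theta$-image has positive $S^2$-diameter because $\overline\Theta$ has finite fibers; if $|\lambda|<1$, the lengths tend to $\infty$, and the segments' $\T$-images eventually cover a full closed geodesic (if $v$ is rational with respect to $\Gtr$) or equidistribute on $\T$ (otherwise), yielding a uniform positive lower bound on their $\T$-diameter. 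Either way, this lower bound survives projection to $S^2$ under the finite branched cover $\overline\Theta$, contradicting $\diam_\varrho(\widetilde\alpha_n)\ra 0$.

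The main obstacle is this positive lower bound on $\diam_\varrho(\widetilde\alpha_n)$ in the second implication, since long paths in a compact space can a priori wind within arbitrarily small regions. The linearity of $A$ is what rules this scenario out: the preimages $A^{-n}(\beta)$ are honest straight segments in $\R^2$, hence project to geodesics on the flat torus $\T$, and the rigid rational/irrational dichotomy for geodesics on flat tori prevents their images from collapsing to small diameter, a lower bound which then descends through the finite-to-one $\overline\Theta$.
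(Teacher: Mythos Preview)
The paper does not prove this proposition in full (it cites \cite[Proposition~6.13]{BM17}), but the implication ``$f$ expanding $\Rightarrow$ $L$ expanding'' is established as Corollary~\ref{cor:Lexp} in the broader setting of quotients of torus endomorphisms. That argument deduces from Lemmas~\ref{lem:liftpathshrinks} and~\ref{lem:unifcontTheta} a coarse contraction estimate for $A^{-n}$ (Lemma~\ref{lem:exptranstoA}) and shows it fails along a real eigenline of eigenvalue modulus $\le 1$. Your route to the same implication is more circuitous: you pass to the torus and invoke a rational/irrational dichotomy for flat geodesics to bound the $\T$-diameter of $\pi(A^{-n}(\beta))$ from below, then push through $\overline\Theta$. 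This detour is unnecessary. Since $A$ is affine, $A^{-n}(\beta)$ is already a straight segment in $\R^2$ of Euclidean diameter $|\lambda|^{-n}|v|\ge |v|>0$, and Lemma~\ref{lem:unifcontTheta}~\ref{item:unifcontTheta2} (connected sets in $\R^2$ of definite diameter have $\Theta$-images of definite $\varrho$-diameter) yields the contradiction with Lemma~\ref{lem:liftpathshrinks} directly---no torus geometry or equidistribution required. You should also note that the small eigenvalue is automatically real: complex conjugate eigenvalues would have product $\det(L)=\deg(f)\ge 2$, forcing both moduli above $1$.

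For ``$L$ expanding $\Rightarrow$ $f$ expanding'' your outline is sound, but two steps are stated too loosely. A component $W$ of $\T\setminus\overline A^{-n}(\overline\CC)$ is not literally ``of the form $\overline A^{-n}(\widetilde V)$'' but rather a component of such a preimage. More importantly, the bound $\diam(W)\le C\|L^{-n}\|$ does not follow from ``$\pi$ is a local isometry'' alone: you must lift $W$ homeomorphically to $\R^2$ and bound the $\R^2$-diameter of the corresponding lift of $\widetilde V$. This works because the components $\widetilde V$ of $\T\setminus\overline\CC$ are simply connected (since $\overline\Theta$ is unbranched over the two Jordan disks of $S^2\setminus\CC$), so each $W$ lifts to some $\widetilde W\sub\R^2$ with $A^n(\widetilde W)$ a component of $\Theta^{-1}(S^2\setminus\CC)$; the latter fall into finitely many $\Gamma$-orbits and hence have uniformly bounded diameter.
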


This is \cite[Proposition~6.13]{BM17}. 

\subsection{Lattices and Tori}
\label{sec:tori}

We quickly review some facts about lattices and tori
(see   \cite[Section A.8]{BM17} for more details).
   
A {\em lattice}\index{lattice}  
$\Gamma \sub \R^2$ is a non-trivial discrete subgroup of $\R^2$ (considered as a group with vector addition). 
The {\em rank}
 of a lattice is the dimension of the subspace of $\R^2$ 
(considered as a  real vector space) spanned by the elements in
$\Gamma$. Here we are only interested in {\em rank-2} lattices
$\Gamma$, i.e.,  lattices $\Gamma\sub \R^2$ that span $\R^2$. 

 If $\Gamma \sub \R^2$ is  a rank-$2$ lattice, then  
 the quotient space $\R^2/\Gamma$ (equipped with the quotient topology) is a $2$-dimensional 
torus $T^2$,
and the quotient map $\pi\: \R^2 \ra 
T^2=\R^2/\Gamma$ is a covering map. 
The lattice translations $\tau_\ga$, $\ga \in \Gamma$, are deck transformations of 
the quotient map $\pi$ and so $\pi=\pi\circ \tau_{\ga}$ for $\ga\in \Gamma $.
 Actually,  every deck transformation of $\pi$ has this form (see  \cite[Lemma A.25~(i)]{BM17}).  
 Conversely, we may assume that any topological torus $T^2$ is   given in the form $T^2=\R^2/\Gamma$ with some rank-2 lattice $\Gamma \sub \R^2$. 

 In the following lemma, we collect
 various statements that are used
later.

\begin{lemma}\label{lem:torilifts}
Let $\Gamma\sub \R^2$ be a rank-2 lattice, $T^2=\R^2/\Gamma$, and 
$\pi\: \R^2\ra T^2=\R^2/\Gamma$ be the quotient map. 

 \begin{enumerate}%[\upshape(i)]
    
    \item 
    \label{item:tori2}
    If $\overline A\: T^2\ra T^2$ is a torus endomorphism, then
    $\overline A$ can be lifted to a homeomorphism on $\R^2$,
    i.e., there exists a homeomorphism $A\: \R^2\ra \R^2$, such
    that $\overline A\circ \pi =\pi \circ A$. The homeomorphism
    $A$ is orientation-preserving, and unique up to
    postcomposition with a translation $\tau_\ga$,
    $\ga\in \Gamma$.
    
      \item 
        \label{item:tori3}
        If $\overline A\: T^2\ra T^2$ is a torus endomorphism,
        then there exists a unique invertible $\R$-linear map
        $L\: \R^2\ra \R^2$ with $L(\Gamma)\sub \Gamma$, such that
        for every lift $ A$ as in \ref{item:tori2} we have
    \begin{equation*} \label{eq:mapL} 
A\circ \tau_\ga\circ A^{-1}= \tau_{L(\ga)}= L\circ \tau_\ga \circ L^{-1}
\end{equation*}
for all $\ga\in \Gamma $.

  \item 
    \label{item:tori4}
    If $\overline  A\: T^2\ra T^2$ is a torus endomorphism and
    $L$ is 
    the map as in  \ref{item:tori3}, then 
    $\deg(\overline A)=\det(L)$. 
 \end{enumerate}
\end{lemma}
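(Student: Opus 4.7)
The plan is to prove the three parts in sequence, relying on standard covering space theory together with the fact stated earlier in the paper that every deck transformation of $\pi$ has the form $\tau_\gamma$ for some $\gamma\in\Gamma$, and that a torus endomorphism is automatically a covering map.

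For part \ref{item:tori2}, I would invoke the universal lifting property. Since $\overline A$ is a covering map, so is $\overline A\circ \pi\colon \R^2\to T^2$. The simple connectivity of $\R^2$ then yields a continuous lift $A\colon \R^2\to \R^2$ with $\pi\circ A=\overline A\circ \pi$. By Lemma~\ref{lem:2_3_branched}\ref{item:2_out_3_2}, $A$ itself is a covering map of $\R^2$, and since $\R^2$ is simply connected $A$ must be a homeomorphism. Orientation-preservation is inherited from $\overline A$ and $\pi$. Uniqueness up to a postcomposed deck transformation of $\pi$ is standard, and by hypothesis every such deck transformation is a translation $\tau_\gamma$ with $\gamma\in\Gamma$.

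For part \ref{item:tori3}, given such a lift $A$ and $\gamma\in\Gamma$, I would show that $A\circ \tau_\gamma\circ A^{-1}$ is itself a deck transformation of $\pi$ via the calculation
\[
\pi\circ A\circ \tau_\gamma\circ A^{-1}=\overline A\circ\pi\circ\tau_\gamma\circ A^{-1}=\overline A\circ\pi\circ A^{-1}=\pi\circ A\circ A^{-1}=\pi.
\]
Hence $A\circ \tau_\gamma\circ A^{-1}=\tau_{L(\gamma)}$ for a unique $L(\gamma)\in\Gamma$. The assignment $\gamma\mapsto L(\gamma)$ is additive, because $\tau_{\gamma_1+\gamma_2}=\tau_{\gamma_1}\circ\tau_{\gamma_2}$, and injective, because $L(\gamma)=0$ forces $\tau_\gamma=\id_{\R^2}$ and hence $\gamma=0$. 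Since $\Gamma$ is a rank-$2$ lattice spanning $\R^2$, one extends $L$ by $\Z$-linearity on a basis of $\Gamma$ and then by $\R$-linearity to a (necessarily invertible) linear map on $\R^2$ with $L(\Gamma)\subset \Gamma$. The relation $L\circ \tau_\gamma\circ L^{-1}=\tau_{L(\gamma)}$ is then immediate from $\R$-linearity. Independence of the particular lift $A$ is a direct check: replacing $A$ by $\tau_{\gamma_0}\circ A$ conjugates $\tau_{L(\gamma)}$ by the translation $\tau_{\gamma_0}$, which leaves it unchanged.

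For part \ref{item:tori4}, I would compute $\deg(\overline A)$ by counting a generic fiber. Fix $y\in T^2$ with lift $\tilde y\in\R^2$, so $\pi^{-1}(y)=\tilde y+\Gamma$. Since $A$ is a homeomorphism, each $\gamma\in\Gamma$ determines a unique preimage $\tilde x_\gamma=A^{-1}(\tilde y+\gamma)$. Using $A\circ\tau_{\gamma'}=\tau_{L(\gamma')}\circ A$ from part \ref{item:tori3}, one sees that $\tilde x_{\gamma_1}$ and $\tilde x_{\gamma_2}$ descend to the same point of $T^2$ iff $\gamma_1-\gamma_2\in L(\Gamma)$. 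Hence $\deg(\overline A)=\#\overline A^{-1}(y)=[\Gamma:L(\Gamma)]=|\det(L)|$, and orientation-preservation of $A$ forces $\det(L)>0$, giving $\deg(\overline A)=\det(L)$. The main obstacle is the bookkeeping in part \ref{item:tori3}, namely verifying that the map $L$ is well defined on $\Gamma$, independent of the lift, and extends to an invertible linear endomorphism of $\R^2$; part \ref{item:tori4} then reduces to the standard identification of the index of a full-rank sublattice with the absolute value of a determinant, which I would quote rather than reprove.
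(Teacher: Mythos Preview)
The paper does not actually prove this lemma: it simply cites \cite[Lemma~A.25]{BM17} and adds only the short remark that $L$ is invertible because $\gamma\mapsto L(\gamma)$ is an injective endomorphism of the rank-$2$ lattice $\Gamma$---exactly the argument you give in part~\ref{item:tori3}. Your proposal is a correct and self-contained version of the standard covering-space proof, and where the paper does argue (the invertibility of $L$) it matches your reasoning.

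One small point would benefit from an extra sentence. In part~\ref{item:tori4} you assert that orientation-preservation of $A$ forces $\det(L)>0$. This is true, but it is not quite immediate from what you have written, since $A$ is not itself linear. A clean way to close this is to note that the linear map $L$ also descends to a torus endomorphism $\overline{L}\colon T^2\to T^2$ (because $L(\Gamma)\subset\Gamma$), and $\overline{L}$ induces the same map $L$ on $\pi_1(T^2)\cong\Gamma$ as $\overline{A}$ does; hence $\overline{A}$ and $\overline{L}$ are homotopic and $\deg(\overline{A})=\deg(\overline{L})=\det(L)$. Positivity then follows because $\overline{A}$ is an orientation-preserving covering map. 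With that line added, your fiber-counting argument (and the identification $[\Gamma:L(\Gamma)]=\lvert\det(L)\rvert$, which you rightly quote) is complete.
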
  
This is part of   \cite[Lemma A.25]{BM17}. Note that there it was not explicitly stated that the linear map $L$ in  \ref{item:tori3} is invertible. This  was addressed in the proof though: 
one observes  that  the inclusion $L(\Gamma)\sub \Gamma$ and the relation $A\circ \tau_\ga\circ A^{-1}= \tau_{L(\ga)}$ for $\ga \in \Gamma$ imply that the map $\ga\in \Gamma \mapsto L(\ga)\in \Gamma$ is injective.
Therefore, $L\: \Gamma \ra \Gamma$ is an injective group homomorphism.
Since $\Gamma$ is a rank-2 lattice,  $L$ must be invertible as an $\R$-linear map on $\R^2$.

One can identify $\Gamma$ with the fundamental group 
of $T^2$. Then the linear map $L$  is essentially 
the map on the fundamental group of $T^2$ induced by $\overline A$. 
For a careful explanation of this, see the discussion 
after \cite[Lemma A.25]{BM17}.

\subsection{Lifts by Branched Covering Maps}
\label{sec:lifts-branch-cover}

Since a Thurston map is a branched covering map, we need slight
variants of the standard lifting theorems for unbranched covering
maps. We list a useful uniqueness  result (this is essentially 
 \cite[Lemma~A.19~(i)]{BM17}).

\begin{lemma}
  \label{lem:liftbranched}
  Let $X,Y$, and $Z$ be surfaces and $f\colon X\to Y$ be a
  branched covering map. Suppose $g_1,g_2\colon Z\to X$ are
  continuous and discrete maps, such that
  $f\circ g_1 = f\circ g_2$. If there exists a point $z_0 \in Z$
  such that $p\coloneqq g_1(z_0)= g_2(z_0) $ and
  $f(p) \in Y{\setminus } f(\crit(f))$, then $g_1=g_2$.
\end{lemma}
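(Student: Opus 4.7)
The plan is to argue by connectedness. Let $T\sub Z$ be the set of points $z\in Z$ admitting an open neighborhood $W$ with $g_1|W=g_2|W$. By construction $T$ is open, and since $Z$ is connected (being a surface), it suffices to show $T$ is non-empty and closed. That $z_0\in T$ follows from the hypothesis $f(p_0)\notin f(\crit(f))$, where $p_0\coloneqq g_1(z_0)=g_2(z_0)$: this forces $p_0\notin\crit(f)$, so $f$ is a local homeomorphism at $p_0$; choosing a neighborhood $U$ of $p_0$ on which $f$ is injective and a connected neighborhood $W$ of $z_0$ with $g_1(W),g_2(W)\sub U$, the identity $f\circ g_1=f\circ g_2$ together with the injectivity of $f|U$ yields $g_1|W=g_2|W$.

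For closedness, take $z\in\overline{T}$. Continuity, the Hausdorff property of $X$, and the relation $g_1=g_2$ on $T$ give $g_1(z)=g_2(z)$; call the common value $p$. If $p\notin\crit(f)$, the argument above reproduces a neighborhood $W\ni z$ with $g_1|W=g_2|W$, so $z\in T$. The delicate case is $p\in\crit(f)$, where $f$ fails to be a local homeomorphism and the usual uniqueness argument for lifts under unbranched covers breaks down. I would handle it via the normal form \eqref{eq:z^dbrcov}: pick charts $\varphi\colon U\to\D$ centered at $p$ and $\psi\colon V\to\D$ centered at $f(p)$ with $\psi\circ f\circ\varphi^{-1}(w)=w^d$, where $d=\deg(f,p)\ge2$. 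Using that the fibers $g_i^{-1}(p)$ are discrete, one can shrink $W$ to a disk-neighborhood of $z$ with $g_i(W)\sub U$ for $i=1,2$ and $g_i(w)\neq p$ for $w\in W'\coloneqq W\setminus\{z\}$. Setting $h_i\coloneqq \varphi\circ g_i|W$, the equation $f\circ g_1=f\circ g_2$ translates to $h_1^d=h_2^d$ on $W$, and on the connected punctured disk $W'$ both $h_i$ are non-vanishing. Hence $h_2/h_1\colon W'\to\C$ is continuous with image in the discrete set of $d$-th roots of unity, and by connectedness equals a constant $\zeta_0$ with $\zeta_0^d=1$.

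If $\zeta_0=1$, then $h_1=h_2$ on $W'$, hence on $W$ by continuity, and $z\in T$. If $\zeta_0\neq1$, then $g_1\neq g_2$ on $W'$, forcing $W\cap T\sub\{z\}$; this contradicts $z\in\overline{T}$, since every neighborhood of $z$ must meet $T$ at points other than $z$. Either way $z\in T$, so $T$ is closed and equals $Z$. The main obstacle is precisely the critical-point case just sketched: one cannot invoke the unbranched lifting theorem there, and uniqueness has to be extracted from the discreteness hypothesis on the $g_i$, which is exactly what enables the reduction to a locally constant root-of-unity on the connected punctured disk $W'$.
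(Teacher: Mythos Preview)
The paper does not give its own proof of this lemma; it simply records it as a known uniqueness statement for lifts under branched covers and cites \cite[Lemma~A.19~(i)]{BM17}. There is therefore no in-paper argument to compare against, but your proof is correct and self-contained.

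The connectedness strategy is the natural one, and your handling of the critical case via the local normal form $w\mapsto w^d$ is exactly right: the discreteness hypothesis on the $g_i$ is precisely what lets you puncture $W$ at $z$ while keeping $h_1,h_2$ non-vanishing on the connected set $W'=W\setminus\{z\}$, so the ratio $h_2/h_1$ is a continuous map into the finite set of $d$-th roots of unity and hence constant. One small remark on the final contradiction: the sentence ``every neighborhood of $z$ must meet $T$ at points other than $z$'' is justified because $T$ is open---so $W\cap T\sub\{z\}$ already forces $z\notin T$, and then $z\in\overline{T}\setminus T$ means every neighborhood of $z$ meets $T\setminus\{z\}$, contradicting $W'\cap T=\emptyset$. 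With that clarification the argument is complete.
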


Note that $g_1$ and $g_2$ can be considered as lifts of the map $h\coloneqq
f\circ g_1 = f\circ g_2$ under $f$.
Therefore, this is really
a uniqueness statement for lifts under $f$.

The condition $y\coloneqq f(p)\in  Y{\setminus } f(\crit(f))$ is the same  as  the requirement that   $y$ is not a critical value
of $f$, or equivalently, that the fiber $f^{-1}(y)$ contains no critical point of $f$.  We
will apply it in the case when $f\colon S^2\to S^2$ is a Thurston
map. Then this condition is satisfied if $p\in 
S^2{\setminus } f^{-1}(\post(f))$, because this implies that 
$f(p) \in   S^2{\setminus } \post(f)\sub  S^2{\setminus } f(\crit(f))$.

\section{Parabolicity of the Orbifold}
\label{sec:para} 

In this section we will prove the implication \ref{item:quo1}
$\Rightarrow$ \ref{item:quo2} in Theorem~\ref{thm:quo}.
Throughout the section, we assume that $f\: S^2\ra S^2$ is a
given quotient of a torus endomorphism that is expanding as a
Thurston map (see Lemma~\ref{lem:simobserv}~\ref{item:simobi}). Then there
exists a torus $T^2$, and maps $\overline A\: T^2\ra T^2$ and
$\overline \Theta\: T^2 \ra S^2$ as in \eqref{eq:Lattesdef}. We
can identify $T^2$ with a quotient $\R^2/\Gamma $, where
$\Gamma \sub \R^2$ is a rank-$2$ lattice, and we obtain a
quotient map $\pi \: \R^2\ra T^2\cong \R^2/\Gamma $.  The map
$\overline A$ lifts to an orientation-preserving homeomorphism
$A\: \R^2 \ra \R^2$, such that $\pi \circ A= \overline A\circ \pi$
(this is standard and explicitly formulated in
Lemma~\ref{lem:torilifts}~\ref{item:tori2}).  We define
$\Theta=\overline \Theta \circ \pi$. This is a branched covering
map, since $\pi$ is a covering map and $\overline \Theta$ is a
branched covering map (see
Lemma~\ref{lem:2_3_branched}~\ref{item:2_out3_1}). This leads to
the following commutative diagram:

\begin{equation}
  \label{eq:Athetafpi34}
  \xymatrix{
    \R^2 \ar[r]^{A} \ar[d]_{\pi} \ar@/_2pc/[dd]_{\Theta} 
    & \R^2 \ar[d]^{\pi}\ar@/^2pc/[dd]^{\Theta}
    \\
    T^{\smash{2}} \ar[r]^{\overline{A}}\ar[d]_{\overline{\Theta}}  & T^{\smash{2}}\ar[d]^{\overline{\Theta}}
    \\
    S^2 \ar[r]^{f} & S^2\rlap{.}
  }
\end{equation}

% \no In this section we explore the relation between quotients of torus endomorphisms and Latt\`es-type maps.  We will prove the following statement.  

% In the following,  we assume that $f\: S^2\ra S^2$ is a quotient of a torus endomorphism. Then there exists a torus $T^2$, and maps $\overline A\: T^2\ra T^2$  and  $\overline \Theta\: T^2 \ra S^2$ 
% as in \eqref{eq:Lattesdef}. We can identify $T^2$ with a quotient $\R^2/\Gamma $, where $\Gamma \sub \R^2$ is a rank-$2$ lattice, and we obtain  a quotient map $\pi \: \R^2\ra T^2\cong \R^2/\Gamma $.  The map $\overline A$ lifts to an orientation-preserving homeomorphism $A\: \R^2 
% \ra \R^2$ such that $\pi \circ A= \overline A\circ \pi$ (as in  
% \cite[Section~A.8]{BM17} this follows from Lemma~\ref{lem:torilifts}). As we have seen before (cf.~\eqref{eq:Athetafpi23}), this leads  to  the following commutative diagram:  
% \begin{equation}
%   \label{eq:Athetafpi200}
%   \xymatrix{ \R^2 \ar[r]^{A} \ar[d]_{\pi } &
%     \R^2 \ar[d]^{\pi }\\
%     T^2 \ar[r]^{\overline A} \ar[d]_{\overline\Theta} &
%     T^2 \ar[d]^{\overline\Theta}
%     \\
%     S^2 \ar[r]^f & S^2. 
%   }
% \end{equation}

We denote by $G$ the group of all deck transformations of
$\Theta$, i.e., the group of all homeomorphisms $g: \R^2\ra
\R^2$, 
such that $\Theta\circ g=\Theta$. Since $\Theta$ preserves
orientation, the same is true for each homeomorphism $g\in G$.

Recall that $\tau_h$ for $h\in \R^2$ denotes the 
translation on $\R^2$ given by $\tau_h(u)=u+h$ for $u\in
\R^2$.
Then $\pi \circ \tau_\ga=\pi$ for $\ga\in \Gamma$.
This
implies that all lattice translations $\tau_\ga$,
$\ga\in \Gamma$, belong to $G$; indeed, for $\ga\in \Gamma$ we have 
$$ \Theta \circ \tau_\ga=\overline \Theta\circ \pi \circ  \tau_\ga
=\overline \Theta\circ \pi =  \Theta,$$
as required. 

Our goal is to show that $f$ has a parabolic orbifold. To do so,
we want to apply Lemma~\ref{lem:simobserv}~\ref{item:simobiii}. Essentially, we have to show that the group $G$ of deck
transformations acts transitively on each fiber of $\Theta$,
i.e., on each of the sets $\Theta^{-1}(p)$, $p\in S^2$.
This means that we have to analyze
some properties of the fixed map $\Theta$ in  
\eqref{eq:Athetafpi34}. Note that the diagram \eqref{eq:Athetafpi34}  remains valid with the same map $\Theta$, if we replace $A$ with 
$A'=\tau_\gamma\circ A$ for any $\gamma\in \Gamma$. We can also replace $f, \overline A, A$ with iterates $f^n,  \overline A^n, A^n$, respectively.  We will make such replacements whenever this is convenient. 

The map $A$ induces an invertible  linear map $L\: \R^2 \ra \R^2$ such that
$L(\Gamma)\sub \Gamma$ and
\begin{equation}\label{eq:ALtauga}
A \circ \tau_\ga \circ A^{-1}=\tau_{L(\ga)}= L \circ \tau_\ga \circ L^{-1}
\end{equation}  
for all $\ga \in \Gamma$ (see Lemma~\ref{lem:torilifts}~\ref{item:tori3}). As we mentioned, this  map  $L$ can be viewed as the homomorphism  induced by $\overline A$ on the fundamental group on $T^2$ (see the discussion after Lemma~\ref{lem:torilifts}). If we use \eqref{eq:ALtauga} repeatedly, then we see that 
\begin{equation}\label{eq:ALtauga2}
A^n \circ \tau_\ga \circ A^{-n}=\tau_{L^n(\ga)}= L^n \circ \tau_\ga \circ L^{-n}
\end{equation}  
for all $n\in \N$ and $\ga \in \Gamma$.

 If  $x\in \R^2$ and $\ga\in \Gamma$, then \eqref{eq:ALtauga} implies that 
\begin{align*}
|A(x+\ga)-L(x+\ga)|&= |(A\circ \tau_\ga)(x)-L(x)-L(\ga)|\\ &
=|(\tau_{L(\ga)}\circ A)(x)-L(x)-L(\ga)|\\
&= |A(x)-L(x)|.
\end{align*} 
Since the lattice translations $\tau_\ga$, $\ga\in \Gamma$, act cocompactly on $\R^2$,  it follows that there exists a constant $C_0\ge 0$, such that 
\begin{equation}
  \label{eq:LAclose}
  \abs{A(x)-L(x)}\le C_0
  \quad \text{for $x\in \R^2$.} 
\end{equation}
Therefore, the maps $A$ and $L$  agree
``coarsely'' on large scales.

Before we go into more details, we outline the ensuing argument. 
Since $f$ is an expanding  Thurston map, we first want to
translate this expansion  property of $f$  into expansion
properties for the above maps $A$ and $L$. In particular, $L$ is
an expanding linear map (Corollary~\ref{cor:Lexp}).
As we already mentioned, to prove that $f$ has
a parabolic orbifold (see Proposition~\ref{prop:quottorendo=para}),
 we have to show that    $G$ acts transitively on the fibers 
$\Theta^{-1}(p)$, $p\in S^2$ (see
Lemma~\ref{lem:decktransonfib}).   In \cite{Mi06}  one can find related considerations for Latt\`es maps. There the holomorphicity of the underlying maps is crucially used. Here we will instead give a dynamical argument relying  on the  expansion property of $A$. 
We now proceed to establishing  the details. 

\subsection{Expansion Properties} 
\label{sec:expansionprop} 
We start with expansion properties of $A$. Actually, it is easier
to formulate and prove  contraction properties  of $A^{-1}$.
In the following, all metric notions
on $\R^2$ refer to the Euclidean metric and all metric notions on $S^2$ to a fixed visual metric $\varrho$ for $f$ with expansion factor $\Lambda>1$.

\begin{lemma} 
  \label{lem:unifcontTheta} 
  Let $\Theta\: \R^2 \ra S^2$ be a map as \eqref{eq:Athetafpi34}. 
  Then the following statements are true: 
  \begin{enumerate}%[\upshape(i)]
    
  \item
    \label{item:unifcontTheta1}
    For each $\eps>0$, there exists $\delta>0$, such that 
    for all $x,y\in \R^2$, we have
    \begin{equation*}
      |x-y|< \delta
      \Rightarrow
      \varrho (\Theta(x), (\Theta(y))<\eps. 
    \end{equation*}
    
  \item 
    \label{item:unifcontTheta2}    
    For each $\eps>0$, there exists $\delta>0$, such that 
    for each connected set  $K\sub \R^2$, we have
    \begin{equation*}
      \diam_\varrho (\Theta(K))
      <
      \delta \Rightarrow  \diam(K) <\eps.
    \end{equation*}
  \end{enumerate}
\end{lemma}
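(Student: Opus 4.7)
The plan is to exploit the factorization $\Theta=\overline{\Theta}\circ \pi$ together with compactness of $T^2$. For \ref{item:unifcontTheta1}, I would endow $T^2$ with the natural flat quotient metric $d_{T^2}$ inherited from the Euclidean metric on $\R^2$; then $\pi$ is distance-decreasing, $d_{T^2}(\pi(x),\pi(y))\le |x-y|$ for all $x,y\in \R^2$. Since $\overline{\Theta}\colon T^2\to S^2$ is continuous on a compact space, it is uniformly continuous from $(T^2,d_{T^2})$ into $(S^2,\varrho)$, and \ref{item:unifcontTheta1} follows immediately by composition.

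For \ref{item:unifcontTheta2}, my plan is to first establish an analog on $T^2$: for every $\eps'>0$ there exists $\delta_1>0$ such that every connected $M\sub T^2$ with $\diam_\varrho(\overline{\Theta}(M))<\delta_1$ satisfies $\diam(M)<\eps'$. The idea is to use that $\overline{\Theta}$ is a branched covering between compact surfaces. For each $q\in S^2$, pick an evenly covered neighborhood $V_q\sub S^2$ and shrink it so that each of the (finitely many) components of $\overline{\Theta}^{-1}(V_q)$ has diameter less than $\eps'$ in $T^2$; this shrinkage is possible because each such component is a topological disk collapsing to the corresponding point of $\overline{\Theta}^{-1}(q)$ as $V_q$ shrinks to $\{q\}$. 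Compactness of $S^2$ then provides a finite subcover by such neighborhoods $V_{q_1},\dots,V_{q_N}$, and the Lebesgue number lemma supplies a uniform $\delta_1>0$ such that every subset of $S^2$ of $\varrho$-diameter less than $\delta_1$ lies in some $V_{q_j}$. Connectedness of $M$ then forces $M$ into a single component of $\overline{\Theta}^{-1}(V_{q_j})$, giving $\diam(M)<\eps'$.

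To deduce \ref{item:unifcontTheta2} from this, I would lift to $\R^2$ via the injectivity radius of $\pi$. Let $r_0\coloneqq\tfrac12\min\{|\ga|:\ga\in\Gamma\setminus\{0\}\}>0$. Given $\eps>0$, apply the torus analog with $\eps'\coloneqq\min(\eps/2,r_0)$ to obtain a threshold $\delta>0$. If $K\sub\R^2$ is connected with $\diam_\varrho(\Theta(K))<\delta$, then $\pi(K)$ is connected and $\overline{\Theta}(\pi(K))=\Theta(K)$ has $\varrho$-diameter less than $\delta$, so $\diam(\pi(K))<\eps'\le r_0$. Thus $\pi(K)$ lies in an open ball $B\sub T^2$ of radius $\eps'$, and since $\eps'\le r_0$, the preimage $\pi^{-1}(B)$ is a disjoint union of Euclidean open balls of radius $\eps'$ in $\R^2$, each mapped isometrically onto $B$ by $\pi$. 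Connectedness of $K$ forces it into a single such ball, giving $\diam(K)\le 2\eps'\le\eps$.

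The main technical point to navigate is the uniform sheet-size estimate inside $T^2$: it relies on the branched covering structure of $\overline{\Theta}$ (through the existence of evenly covered neighborhoods), together with finiteness of each fiber and compactness of $S^2$. Once that is in place, the descent from $T^2$ to $\R^2$ is a routine unbranched covering argument controlled by the injectivity radius $r_0$ of $\pi$.
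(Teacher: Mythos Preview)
Your argument is correct. For part~\ref{item:unifcontTheta1} your approach via the factorization $\Theta=\overline\Theta\circ\pi$ and uniform continuity of $\overline\Theta$ on the compact torus is essentially a repackaging of the paper's proof, which instead phrases things in terms of the cocompact isometric action of the lattice translations on $\R^2$ and a compact fundamental domain; the underlying idea is identical.

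For part~\ref{item:unifcontTheta2}, however, your route is genuinely different. The paper argues directly by contradiction: it takes a sequence of connected sets $K_n$ with $\diam_\varrho(\Theta(K_n))\to 0$ but $\diam(K_n)\ge\eps_0$, uses lattice translations to bring chosen points $x_n\in K_n$ into a compact region, extracts convergent subsequences $x_n\to x$ and $y_n\to y$ with $|x-y|=c>0$ fixed, and derives a contradiction from the discreteness of the fiber $\Theta^{-1}(p)$ at $p=\Theta(x)$. Your proof is constructive: you first establish the analog on $T^2$ using evenly covered neighborhoods of $\overline\Theta$ together with a Lebesgue number, and then transfer it to $\R^2$ by the injectivity radius of the unbranched cover $\pi$. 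Your approach makes the source of uniformity explicit (finiteness of fibers of $\overline\Theta$, compactness of $S^2$, and the covering geometry of $\pi$), at the cost of a two-step structure; the paper's sequential compactness argument is shorter and works with $\Theta$ directly without separating the branched and unbranched layers.
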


The statement and its proof are a small modification of the
similar statement \cite[Lemma 6.14]{BM17}.

\begin{proof}  \ref{item:unifcontTheta1} The assertion is that 
$\Theta$ is uniformly continuous on $\R^2$. Essentially, this follows from the fact that  the group $G$ of deck transformations of $\Theta$ 
contains the subgroup $G'\coloneqq\{\tau_\ga: \ga \in \Gamma\}$
of   all lattice translations and that   this subgroup $G'$ acts isometrically and  cocompactly on $\R^2$. 

In particular,   we can find a compact fundamental domain $F\sub \R^2$ for the action 
of $G'$ on $\R^2$. Now suppose $x,y\in \R^2$ and $ |x-y|$ is small.
 Then there exists 
$g\in G'$, such that $g(x)\in F$. If $ |x-y|$ is small enough, as we may assume,  then $g(x), g(y)\in U$, where $U$ is a fixed compact neighborhood of $F$. Since $\Theta$ is uniformly continuous on $U$, and $|g(x)-g(y)|=|x-y|$, it follows that 
$$  \varrho(\Theta(x), \Theta(y) )= \varrho(\Theta(g(x)), \Theta(g(y) )$$ 
is small only depending on $ |x-y|$. The uniform continuity of $\Theta$ follows. 

\smallskip
\ref{item:unifcontTheta2}
We argue by contradiction and assume that the statement is
false. Then there exist connected sets $K_n\sub \R^2$, such that 
$\diam_\varrho(\Theta(K_n))\to 0$ as $n\to \infty$, but
$\diam(K_n)\ge \eps_0$ for $n\in \N$, where $\eps_0>0$.

  We pick a point $x_n\in K_n$ for $n\in \N$. If we replace each
  set $K_n$ with its image  $K'_n=g_n(K_n)$ for suitable
  $g_n\in G' $, where again $G'\coloneqq\{\tau_\ga: \ga \in \Gamma\}$   and pass to a subsequence if
  necessary, then we may assume that the sequence $\{x_n\}$
  converges, say $x_n\to x\in \R^2$ as $n\to \infty$. Note that $\diam(K'_n)=\diam(K_n)$ and
  $\Theta(K'_n)=\Theta(K_n)$. 
 
 Let $p\coloneqq \Theta(x)$.  Since $\Theta\: \R^2\ra S^2$ is a
 branched covering map, the set $\Theta^{-1}(p)$ is discrete in
 $\R^2$ and consists of isolated points. In particular, $x\in
 \Theta^{-1}(p)$ is an
 isolated point of $\Theta^{-1}(p)$ and so there exists a constant
 $m>0$ such that $|y-x|\ge m$ whenever $x,y\in \Theta^{-1}(p)$ and $x\ne y$.  
 
 Pick a constant $c$ with $0<c<\min\{\eps_0/2, m\}$.  The set $K_n$ is connected, and has diameter $\diam(K_n)\ge \eps_0> 2c$. Hence $K_n$ cannot be contained in the disk $\{z\in \R^2: |z-x_n|<c\}$, and so  it meets the circle $\{z\in \R^2: |z-x_n|=c\}$. It follows  that  there exists a
 point $y_n \in K_n$ with $|x_n-y_n|=c$. By passing to another subsequence if necessary, we may assume that the sequence $\{y_n\}$ converges, say $y_n\to y\in \R^2$ as $n\to \infty$. 
 Then $|x-y|=c<m$.
 Note that $\Theta(x_n), \Theta(y_n)\in \Theta(K_n)$ for $n\in \N$, 
and $\diam_\varrho(\Theta(K_n))\to 0$ as $n\to \infty$. 
Therefore
\begin{equation*}
  p
  =
  \Theta(x)
  =
  \lim_{n\to \infty} \Theta(x_n)
  =
  \lim_{n\to \infty} \Theta(y_n) =\Theta(y), 
\end{equation*}
and  $x,y\in \Theta^{-1}(p)$. Since $|x-y|=c>0$, we have $x\ne y$. Then  $x$ and $y$ are two distinct 
points in $\Theta^{-1}(p)$ with $|x-y|=c<m$. This contradicts the choice of $m$, and the statement follows. 
 \end{proof}

After this preparation, we now turn to the  contraction properties of the map $A^{-1}$. 

\begin{lemma}
  \label{lem:exptranstoA}
  Let  the map $A\: \R^2 \ra \R^2$ be  as in
  \eqref{eq:Athetafpi34}. If $\eps_1, \eps_2>0$, then there
  exists $n_0\in \N$, such that
  \begin{equation*}
    % \label{eq:coarsecontr}
    \abs{A^{-n}(x)-A^{-n}(y)}
    \le
    \eps_1\abs{x-y}+\eps_2
  \end{equation*} 
  for all $x,y\in \R^2$ and $n\in \N$ with $n\ge n_0$. 
\end{lemma}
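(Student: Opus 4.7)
The plan is to join $x$ and $y$ by the Euclidean segment $[x,y]$, subdivide it into finitely many short pieces whose $\Theta$-images have visual diameter below the threshold $\delta_\varrho$ from Lemma~\ref{lem:liftpathshrinks}, and then use the commutation $f^n \circ \Theta = \Theta \circ A^n$ from \eqref{eq:Athetafpi34} to recognize $\Theta \circ A^{-n} \circ \alpha^{(i)}$ as an $f^n$-lift of the short path $\Theta \circ \alpha^{(i)}$. Lemma~\ref{lem:liftpathshrinks} will then force each such lift to have visual diameter at most $C\Lambda^{-n}$, and Lemma~\ref{lem:unifcontTheta}~\ref{item:unifcontTheta2} will convert that into a small Euclidean diameter of $A^{-n}(\alpha^{(i)})$. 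Summing over the pieces, the multiplicative part of the resulting bound will absorb into $\eps_1 |x-y|$ and the residual part into $\eps_2$.

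Concretely, given $\eps_1, \eps_2 > 0$, I would first apply Lemma~\ref{lem:unifcontTheta}~\ref{item:unifcontTheta1} to pick $\delta_{uc}>0$ such that $\diam_\varrho(\Theta(K)) < \delta_\varrho$ whenever $K\subset \R^2$ satisfies $\diam(K) < \delta_{uc}$. Setting $\eps' \coloneqq \min(\eps_1 \delta_{uc}, \eps_2)$, I would then invoke Lemma~\ref{lem:unifcontTheta}~\ref{item:unifcontTheta2} to produce $\delta' > 0$ forcing $\diam(K') < \eps'$ for every connected $K' \subset \R^2$ with $\diam_\varrho(\Theta(K')) < \delta'$. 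Finally I would pick $n_0 \in \N$ with $C\Lambda^{-n_0} < \delta'$. For $n \ge n_0$ and arbitrary $x, y \in \R^2$, I would subdivide $[x,y]$ into $N \le |x-y|/\delta_{uc} + 1$ consecutive sub-segments $\alpha^{(1)}, \dots, \alpha^{(N)}$ of Euclidean diameter less than $\delta_{uc}$. The chain of reductions described above will then give $\diam(A^{-n}(\alpha^{(i)})) < \eps'$ for each $i$, and summing over $i$ will yield
\[
|A^{-n}(x) - A^{-n}(y)| \le \sum_{i=1}^N \diam(A^{-n}(\alpha^{(i)})) \le N \eps' \le \eps_1 |x-y| + \eps_2.
\]

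The only essentially dynamical ingredient is the exponential shrinking of path lifts in the visual metric supplied by Lemma~\ref{lem:liftpathshrinks}; everything else is uniform continuity of $\Theta$ in both directions. I do not anticipate any deep obstacle: a naive linear-algebra approach using $|A(u)-L(u)|\le C_0$ and the spectral radius of $L^{-1}$ produces only a quasi-isometric bound with a fixed additive constant and therefore cannot be pushed below an arbitrary $\eps_2$, which is why the Thurston-map expansion must be invoked through the short-path lifting argument. The step requiring the most care is simply the bookkeeping that arranges for both the multiplicative coefficient $\eps'/\delta_{uc}$ and the additive constant $\eps'$ to drop below $\eps_1$ and $\eps_2$ respectively; this is what dictates the choice $\eps' = \min(\eps_1 \delta_{uc}, \eps_2)$ and fixes the order in which the constants $\delta_{uc}, \eps', \delta', n_0$ must be selected.
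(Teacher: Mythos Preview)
Your proposal is correct and follows essentially the same route as the paper's proof: subdivide $[x,y]$ into pieces of Euclidean diameter below a threshold coming from Lemma~\ref{lem:unifcontTheta}~\ref{item:unifcontTheta1}, identify $\Theta\circ A^{-n}$ of each piece as an $f^n$-lift via \eqref{eq:Athetafpi34}, shrink it with Lemma~\ref{lem:liftpathshrinks}, and pull the small visual diameter back to a small Euclidean diameter via Lemma~\ref{lem:unifcontTheta}~\ref{item:unifcontTheta2}. Your constants $\delta_{uc},\eps',\delta',n_0$ correspond exactly to the paper's $\delta,\widetilde{\delta},\widetilde{\delta}_\varrho,n_0$, and the final estimate $N\eps'\le \eps_1|x-y|+\eps_2$ is the same.
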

The lemma essentially says that high iterates of $A^{-1}$ shrink
distances that are not too small by an arbitrarily small
factor. Conversely, by applying the statement to $x=A^n(u)$ and
$y=A^n(v)$ for $u,v\in \R^2$, we see that sufficiently high
iterates of $A$ expand distances that are not too small by an
arbitrarily large factor.

\begin{proof}
  Let $\varrho$ be the visual metric for $f$ on $S^2$ that we
  fixed earlier and $\Lambda>1$ be the corresponding expansion
  factor. We also fix constants $\delta_\varrho>0$
  and $C>0$ as in  Lemma~\ref{lem:liftpathshrinks}. Then, by Lemma~\ref{lem:unifcontTheta}~\ref{item:unifcontTheta1}, we
  can find $\delta>0$ with the following property: if $\beta $ is
  a path in $\R^2$ with $\diam(\beta) <\delta$ and
  $\alpha=\Theta\circ \beta$, then
  $\diam_{\varrho}(\alpha) <\delta_{\varrho}$.

  Now suppose that $\beta$ is a path in $\R^2$ with
  $\diam(\beta)<\delta$. Then the corresponding path
  $\alpha=\Theta\circ \beta$ satisfies $\diam_{\varrho}(\alpha)
  <\delta_{\varrho}$. For $n\in \N$, we also consider  the  paths
  $\widetilde{\beta}_n\coloneqq A^{-n} 
  \circ \beta$ in $\R^2$, 
  and $\widetilde{\alpha}_n \coloneqq \Theta\circ
  \widetilde{\beta}_n = \Theta\circ A^{-n}
  \circ \beta$ in $S^2$.
  From \eqref{eq:Athetafpi34} we obtain
  \begin{equation*}
    f^n \circ \widetilde \alpha_n
    =
    f^n \circ \Theta\circ A^{-n}\circ \beta
    =
    \Theta\circ A^n \circ A^{-n}\circ \beta
    =
    \Theta \circ \beta=\alpha.  
  \end{equation*}
  Therefore, $\widetilde \alpha_n$ is a lift
  of $\alpha$ under $f^n$. The
  relations between the paths are summarized in the following
  commutative diagram:
  \begin{equation} \label{eq: commpaths}
   \xymatrix{
%     \widetilde{\beta} \subset \R^{2} \ar[r]^{A^n}
     \widetilde{\beta}_n \;\,(\text{in } \R^{2}) \ar[r]^{A^n}
     \ar[d]_{\Theta} & \beta \;\,(\text{in } \R^{2}) \ar[d]^{\Theta}
     \\
     \widetilde{\alpha}_n \;\,(\text{in } S^{2}) \ar[r]^{f^n} & \alpha
     \;\,(\text{in } S^{2}) \rlap{.} 
   }
 \end{equation}

 Since $\diam_\varrho(\alpha) < \delta_\varrho$,  
 by the definition of $C$ and $\delta_\varrho$ we have 
  \begin{equation}
    \label{eq:liftpathfn}
    \diam_\varrho(\widetilde{\alpha}_n)
    \le
    C\Lambda^{-n}. 
 \end{equation}

 Let $\eps_1,\eps_2>0$ be arbitrary, and
 $\widetilde{\delta}\coloneqq \min\{\eps_1\delta, \eps_2\}>0$,
 where $\delta>0$ is chosen as above. Then by
 Lemma~\ref{lem:unifcontTheta}, we can choose
 $\widetilde{\delta}_{\varrho}>0$, such that
 $\diam(\widetilde{\beta}_n)<\widetilde{\delta}$ whenever
 $\diam_\varrho(\widetilde{\alpha}_n)<\widetilde{\delta}_{\varrho}$
 in \eqref{eq: commpaths}.  Note that the constants
 $\delta, \delta_\varrho, \widetilde{\delta}_\varrho,
 \widetilde{\delta}$ serve to control the diameters of
 $\beta,\alpha, \widetilde{\alpha}_n, \widetilde{\beta}_n$,
 respectively.

 Choose $n_0\in \N$, such that
 $C\Lambda^{-n}<\widetilde{\delta}_\varrho$ for $n\ge n_0$. 
 Then  
 \begin{equation}
   \label{eq:pathcontracAn} 
   \diam(\widetilde{\beta}_n)
   <
   \widetilde{\delta}
   =
   \min\{ \eps_1\delta, \eps_2\} 
\end{equation}  
for all $n\in \N$ with $n\ge n_0$.

Now suppose  $x,y\in \R^2$ are arbitrary, and let  $S$ be  the line segment joining $x$ and $y$. Then,
$S$ can be broken up into $N\in \N$ line segments of diameter $<\delta$, where $N\le |x-y|/\delta+1$. We can apply the previous considerations for each of these smaller (parametrized) line segments in the role of $\beta$. By what we have seen,  for $n\ge n_0$, each of these smaller line segments has an image under $A^{-n}$ of diameter $<\widetilde{\delta}$ by \eqref{eq:pathcontracAn}. 
 Since the concatenation of these $N$ image paths  is the path 
 $A^{-n}(S)$  connecting $A^{-n}(x)$  and $A^{-n}(y)$, we
 conclude that
\begin{align*}
 |A^{-n}(x)-A^{-n}(y)|& \le \diam(A^{-n}(S))\le N \widetilde{\delta}\\
 &\le  (|x-y|/\delta+1)\min\{ \eps_1\delta, \eps_2\}\le \eps_1|x-y|+\eps_2 
\end{align*} 
for $n\ge n_0$, as desired. 
\end{proof}

Recall  that an $\R$-linear map $L\: \R^2 \ra \R^2$ is called expanding
 if $|\lambda|>1$ for each of the (possibly complex) roots 
  $\lambda$ of the characteristic polynomial $P_L(z)=\det(L-z \id_{\R^2})$ of $L$.

  \begin{cor}
    \label{cor:Lexp} 
    Suppose the linear map $L\: \R^2 \ra \R^2$ is as in
    \eqref{eq:ALtauga}. Then $L$ is expanding.
  \end{cor}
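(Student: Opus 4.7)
The plan is to promote the metric expansion of $A^n$ provided by Lemma~\ref{lem:exptranstoA} to an algebraic expansion of the linear map $L$, using the exact conjugation relation \eqref{eq:ALtauga2}. That relation yields
\begin{equation*}
A^n(x+\ga) - A^n(x) = L^n(\ga) \qquad\text{for all } x\in\R^2,\;\ga\in\Gamma,\;n\in\N,
\end{equation*}
so with $x = 0$ we obtain the exact identity $|L^n(\ga)| = |A^n(\ga) - A^n(0)|$, which lets us read off information about $L^n$ directly from Lemma~\ref{lem:exptranstoA}.

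Rewriting Lemma~\ref{lem:exptranstoA} by substituting $x = A^n(u)$ and $y = A^n(v)$ gives the expansion form
\begin{equation*}
|A^n(u) - A^n(v)| \ge \tfrac{1}{\eps_1}\bigl(|u-v| - \eps_2\bigr) \qquad \text{for } n \ge n_0(\eps_1,\eps_2).
\end{equation*}
Given any $M > 0$, I would choose $\eps_1 = 1/(2M)$ and $\eps_2 = 1$. Taking $u = 0$ and $v = \ga \in \Gamma$ with $|\ga| \ge 2$ then yields $|L^n(\ga)| \ge M|\ga|$ for $n \ge n_0$. For an arbitrary $\ga \in \Gamma \setminus \{0\}$, pick $k \in \N$ large enough that $|k\ga| \ge 2$; by linearity of $L^n$, the estimate $|L^n(k\ga)| \ge M|k\ga|$ descends to $|L^n(\ga)| \ge M|\ga|$. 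The same scaling argument extends the inequality to $v \in \Q\cdot\Gamma$, and since $\Q\cdot\Gamma$ is dense in $\R^2$ and $v\mapsto |L^n(v)| - M|v|$ is continuous, we conclude $|L^n(v)| \ge M|v|$ for all $v\in\R^2$ and all $n\ge n_0(M)$.

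It remains to deduce that every root $\lambda$ of $P_L$ satisfies $|\lambda|>1$. If $\lambda$ is real with nonzero eigenvector $v$, then $|\lambda|^n = |L^n(v)|/|v| \ge M$ with $M$ arbitrary and $n$ depending on $M$, which forces $|\lambda|>1$. If the eigenvalues are a complex-conjugate pair $\lambda,\overline\lambda$, then $|\lambda|^2 = \det L = \deg(\overline A) \ge 2$ by Lemmas~\ref{lem:torilifts}~\ref{item:tori4} and~\ref{lem:simobserv}~\ref{item:simobi}, so $|\lambda|>1$ as well. The main subtlety in this argument is that Lemma~\ref{lem:exptranstoA} only controls $A^n$ on sufficiently large scales (the additive $\eps_2$-term), whereas we need a scale-invariant expansion bound for the linear map $L$; the exact lattice identity $A^n(x+\ga)-A^n(x) = L^n(\ga)$, together with the homogeneity of $L$ and density of $\Q\cdot\Gamma$, is precisely what upgrades the coarse metric estimate to a uniform linear one.
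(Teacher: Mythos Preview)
Your proof is correct and takes a somewhat different route from the paper's. Both arguments start from Lemma~\ref{lem:exptranstoA} and both dispose of the complex-eigenvalue case via $\det(L)=\deg(\overline A)\ge 2$. The difference lies in how the coarse expansion of $A^n$ is transferred to $L^n$. The paper argues by contradiction: assuming a real eigenvalue $|\lambda_1|\le 1$, it uses the cocompactness of the $\Gamma$-action to show $|A^n(u)-L^n(u)|\le C$ uniformly, and then restricts to the eigenline to bound $|A^n(u)-A^n(v)|\le |u-v|+2C$, contradicting the expansion inequality. You instead use the \emph{exact} identity $A^n(x+\gamma)-A^n(x)=L^n(\gamma)$ coming from \eqref{eq:ALtauga2}, obtain $|L^n(\gamma)|\ge M|\gamma|$ on $\Gamma$ directly, and then pass to all of $\R^2$ by homogeneity and density of $\Q\cdot\Gamma$. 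Your route is arguably cleaner: it avoids the contradiction setup and replaces the ``bounded distance between $A^n$ and $L^n$'' estimate by an exact algebraic relation. The paper's approach, on the other hand, stays closer to the coarse-geometric viewpoint already established in \eqref{eq:LAclose} and does not need the density argument. As a side remark, your estimate $|L^n(v)|\ge M|v|$ for all $v\in\R^2$ actually already forces both eigenvalues to have modulus $>1$ (since it gives $\|L^{-n}\|\to 0$, hence spectral radius of $L^{-1}$ below $1$), so the separate treatment of the complex case is not strictly needed.
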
 

  \begin{proof}
    We argue by contradiction and assume that $L$ is not
    expanding.  Choosing $\eps_1=\eps_2=1/2$ in
    Lemma~\ref{lem:exptranstoA}, we can find a number $n\in \N$,
    such that
\begin{equation}\label{eq:expAk} 
|A^n(u)-A^n(v)|\ge 2|u-v|-1
\end{equation} 
for all $u,v\in \R^2$.
In other words, $A^n$ expands large distances roughly by the factor $2$.
%In other words, under $A^n$ large distances are roughly expanded by the factor $2$.  

Let $\lambda_1, \lambda_2\in \C$ be the two (possibly identical)
roots of the characteristic polynomial
$P(z)=\det(L-z \id_{\R^2})$ of $L$. We may assume that
$|\lambda_1|\le |\lambda_2|$. Since $P$ has real coefficients, we
have $\lambda_2=\overline{ \lambda_1}$ if $\lambda_1$ is not
real.  Moreover,
$\lambda_1\lambda_2=\det(L)=\deg(\overline A)=\deg(f)\ge 2$ (see
Lemma~\ref{lem:torilifts}~\ref{item:tori4} and
Lemma~\ref{lem:simobserv}~\ref{item:simobi}).
Therefore, the only possibility
that $L$ can fail to be expanding is if $\lambda_1$
is real and $|\lambda_1|\le 1$.
Then, there exists
$e\in \R^2$,
$e\ne 0$, such that $L(e)=\lambda_1 e$.

Let $\ell=\{t e: t \in \R\}$ be the line spanned by 
$e$. If $u,v\in \ell$  are arbitrary, then 
\begin{equation}\label{eq:Lcorsecontr}
 |L^{n}(u)-L^{n}(v)|=|\lambda_1|^{n}|u-v| \le |u-v|. \end{equation}
If  $\Gamma$ is the lattice chosen as in the beginning of this section,  then 
 \eqref{eq:ALtauga2} shows that 
\begin{align*}
 A^n(x+\ga)-L^n(x+\ga) &= (A^n\circ \tau_\ga)(x)-L^n(x)-L^n(\ga)\\
 &= (\tau_{L^n(\ga)}\circ A^n)(x)-L^n(x)-L^n(\ga) \\
 &=A^n(x)+ L^n(\ga) -L^n(x)-L^n(\ga) \\&
 = A^n(x) -L^n(x) 
 \end{align*} 
for all $x\in \R^2$. Since the lattice translations $\tau_\ga$, $\ga \in \Gamma$,  act cocompactly on 
$\R^2$,  this implies that  there exists a constant $C\ge 0$, such that 
$$ |A^n(u)-L^n(u)| \le C$$ for all $u\in \R^2$.   
Combining this with \eqref{eq:Lcorsecontr}, we see that 
$$ |A^n(u)-A^n(v)| \le  |L^n(u)-L^n(v)|  +2C \le |u-v|+2C$$ 
for all $u,v\in \ell$.
%So under the map $A^n$
Therefore, the map $A^n$ expands 
distances along $\ell$ by at most an additive term.
%Therefore, under the map $A^n$
%distances along $\ell$ are expanded by at most an additive term.
This is irreconcilable with  \eqref{eq:expAk}, and we get a contradiction. The statement follows. 
\end{proof} 

We record the following consequence. 

\begin{cor}
  \label{cor:Anearfix} 
  Suppose that the map $A$ as in \eqref{eq:Athetafpi34} has a fixed
  point $x\in \R^2$.  Then if $U$ is any open neighborhood of
  $x$, we have $$\bigcup_{n\in \N_0}A^n(U)=\R^2. $$ Moreover, if
  $U$ is bounded in addition, then $U\sub A^n(U)$ for all
  sufficiently large $n\in \N$.
\end{cor}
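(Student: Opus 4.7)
The plan is to derive both assertions from Lemma~\ref{lem:exptranstoA}, exploiting the key observation that the fixed point condition $A(x)=x$ immediately implies $A^{-n}(x)=x$ for every $n\in \N$. This reduces everything to controlling
$$\abs{A^{-n}(y)-x}=\abs{A^{-n}(y)-A^{-n}(x)}$$
for $y\in \R^2$, which is precisely the quantity controlled coarsely by Lemma~\ref{lem:exptranstoA}. Fix $\delta>0$ small enough that the Euclidean ball $B(x,\delta)$ is contained in $U$, which is possible because $U$ is an open neighborhood of $x$.

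For the first assertion, I would fix an arbitrary $y\in \R^2$ and show that $A^{-n}(y)\in U$ for some $n$, which is equivalent to $y\in A^n(U)$. Applying Lemma~\ref{lem:exptranstoA} with $\eps_2\coloneqq \delta/2$ and $\eps_1>0$ chosen so that $\eps_1\abs{y-x}<\delta/2$, I obtain, for all sufficiently large $n\in \N$,
$$\abs{A^{-n}(y)-x}\le \eps_1\abs{y-x}+\eps_2<\delta,$$
so $A^{-n}(y)\in B(x,\delta)\sub U$, as required.

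For the second assertion, assume in addition that $U$ is bounded and choose $R>0$ with $U\sub B(x,R)$. I would apply Lemma~\ref{lem:exptranstoA} a single time with $\eps_1\coloneqq \delta/(2R)$ and $\eps_2\coloneqq \delta/2$, obtaining a threshold $n_0\in \N$ that is independent of the point $u\in U$. For $n\ge n_0$ and any $u\in U$, we then have
$$\abs{A^{-n}(u)-x}\le \eps_1\abs{u-x}+\eps_2<\eps_1 R+\eps_2=\delta,$$
so $A^{-n}(u)\in B(x,\delta)\sub U$; this gives $U\sub A^n(U)$ for all $n\ge n_0$.

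The only subtle point is that Lemma~\ref{lem:exptranstoA} does not provide genuine Lipschitz contraction but only coarse contraction with an additive error $\eps_2$, which a priori prevents iterates from shrinking neighborhoods to arbitrarily small scales. The fixed point $x$ circumvents this: since $x$ serves as a common reference point for all $A^{-n}$, the error $\eps_2$ can be absorbed into any prescribed neighborhood of $x$. In the second assertion, boundedness of $U$ is essential precisely because it allows us to absorb the multiplicative term $\eps_1\abs{u-x}$ uniformly in $u$ by a single choice of $\eps_1$; without boundedness, no uniform threshold $n_0$ would exist.
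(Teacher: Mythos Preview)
Your proof is correct and follows essentially the same approach as the paper's: both arguments reduce to Lemma~\ref{lem:exptranstoA} via the identity $A^{-n}(x)=x$, and your choices $\eps_1=\delta/(2R)$, $\eps_2=\delta/2$ in the second part match the paper's exactly (with $\delta$ in place of the paper's $\eps$). The only cosmetic difference is that in the first assertion the paper simply asserts $|A^{-n}(y)-A^{-n}(x)|$ becomes arbitrarily small, whereas you spell out explicit $\eps_1,\eps_2$; the content is the same.
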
 

\begin{proof}
  Let $U$ be a neighborhood of $x$.
  Then, there exists $\eps>0$, such that
  $B\coloneqq \{ z\in \R^2: |z-x|<\eps\} \sub U$. If
  $y\in \R^2$ is arbitrary, then Lemma~\ref{lem:exptranstoA}
  implies that $|A^{-n}(y)-x|=|A^{-n}(y)-A^{-n}(x)|$ is
  arbitrarily small for $n\in \N$ sufficiently large. Hence there
  exist $n\in \N$, such that $A^{-n}(y)\in B\sub U$, and so
  $y\in A^{n}(U)$.  It follows that
  $\R^2=\bigcup_{n\in \N_0} A^n(U)$.

  If $U$ is bounded, then there exists $R>0$, such that
  $U\sub B'\coloneqq \{ z\in \R^2: |z-x|<R\} $. Applying
  Lemma~\ref{lem:exptranstoA} for $\eps_1=\eps/(2R)$ and
  $\eps_2=\eps/2$, we see that
$$A^{-n}(U)\sub A^{-n}(B')\sub B\sub U $$ for all sufficiently large $n\in \N$. 
Hence, $U\sub A^n(U)$ for all large $n$. 
\end{proof} 

\subsection{Transitive Action on Fibers}
\label{sec:transitive} 

Next, we will show that the group $G$ of deck transformations of the map $\Theta$ as in  \eqref{eq:Athetafpi34} acts transitively on each  fiber
$\Theta^{-1}(p)$, $p\in S^2$. We first show that this is
true in a  special case.

\begin{lemma}
  \label{lem:restrtrans}
  Let $p\in S^2{\setminus } f^{-1}(\post(f))$,
  $x,y\in \Theta^{-1}(p)$, and define $\bar x= \pi(x),
  \bar y=\pi(y)\in T^2$. Suppose $\bar x$ is a fixed point of
  $\overline A$. If $\bar y$ is also a fixed point of
  $\overline A$ or if $\bar x= \overline A(\bar y)$, then there
  exists $g\in G$, such that $g(x)=y$.
\end{lemma}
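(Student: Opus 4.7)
The plan is to treat the two cases separately after a common normalization. Since $\overline{A}(\bar{x})=\bar{x}$ forces $A(x)-x\in\Gamma$, I would first replace $A$ by $\tau_{-\alpha}\circ A$ for the appropriate $\alpha\in\Gamma$, so that $A(x)=x$; diagram~\eqref{eq:Athetafpi34} is preserved by such adjustments. From $p\in S^2\setminus f^{-1}(\post(f))$ and the forward-invariance of $\post(f)$, we get $p\notin\post(f)$ and $f(p)\notin\post(f)$. In particular, $p$ is not a critical value of $\Theta$ (by Lemma~\ref{lem:simobserv}~\ref{item:simobii} together with the fact that $\pi$ is unbranched), so $\Theta$ is a local homeomorphism at both $x$ and $y$, and we can fix a local deck transformation $\phi\colon U_x\to U_y$ with $\phi(x)=y$ and $\Theta\circ\phi=\Theta$ on some connected bounded neighborhood $U_x$ of $x$.

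For the first case ($\bar{y}$ is also $\bar{A}$-fixed), we have $A(y)=y+\beta$ for some $\beta\in\Gamma$, and inductively $A^n(y)=y+\beta_n$ with $\beta_n\coloneqq\sum_{k=0}^{n-1}L^k(\beta)\in\Gamma$. The key construction is
\[
\phi_n\coloneqq\tau_{-\beta_n}\circ A^n\circ\phi\circ A^{-n}\colon A^n(U_x)\to \R^2,
\]
which satisfies $\phi_n(x)=y$ and $\Theta\circ\phi_n=\Theta$ (using $\tau_{-\beta_n}\in G$ and the iterated identity $f^n\circ\Theta\circ A^{-n}=\Theta$). By Corollary~\ref{cor:Anearfix}, for sufficiently large $n_0$ we have $U_x\subset A^{n_0}(U_x)$, so the subsequence $\{A^{k n_0}(U_x)\}_{k\ge 0}$ is a nested exhaustion of $\R^2$. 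Compatibility $\phi_{(k+1)n_0}|_{A^{k n_0}(U_x)}=\phi_{k n_0}$ follows from Lemma~\ref{lem:liftbranched}, since both are continuous discrete lifts of the same map agreeing at $x$ and $\Theta(x)=p$ is not a critical value of $\Theta$. Gluing yields a continuous $g\colon\R^2\to\R^2$ with $\Theta\circ g=\Theta$ and $g(x)=y$. To confirm $g\in G$, I would repeat the construction with the alternative lift $A'\coloneqq\tau_{-\beta}\circ A$ (for which $A'(y)=y$) to produce a continuous $g'$ with $g'(y)=x$ and $\Theta\circ g'=\Theta$; a further application of Lemma~\ref{lem:liftbranched} at $y$ forces $g'\circ g=\id_{\R^2}=g\circ g'$, so $g$ is a homeomorphism.

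For the second case ($\overline{A}(\bar{y})=\bar{x}$), we have $A(y)\in\pi^{-1}(\bar{x})$, so $A(y)=x+\gamma'$ for some $\gamma'\in\Gamma$. Here the shortcut is to set
\[
g\coloneqq A^{-1}\circ\tau_{\gamma'}\circ A,
\]
which is a homeomorphism of $\R^2$ with $g(x)=A^{-1}(A(x)+\gamma')=A^{-1}(A(y))=y$. From $\Theta\circ A=f\circ\Theta$ and $\tau_{\gamma'}\in G$, one obtains $f\circ(\Theta\circ g)=f\circ\Theta$, so $\Theta\circ g$ and $\Theta$ are continuous discrete lifts of $f\circ\Theta$ via $f$ that agree at $x$. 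Since $f(p)\notin\post(f)$ is not a critical value of $f$, Lemma~\ref{lem:liftbranched} yields $\Theta\circ g=\Theta$, and hence $g\in G$. The hard part will be choosing the correct correction in Case~1: the translation $\tau_{-\beta_n}\in G$ keeps $\phi_n(x)=y$ only because $A^n(y)-y\in\Gamma$ when $\bar{y}$ is $\bar{A}$-fixed; in Case~2 the analogous correction would send $x$ to $x$ rather than to $y$, so the iteration strategy breaks down, and one must instead exploit uniqueness of lifts via $f$ through the one-step conjugation above.
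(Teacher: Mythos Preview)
Your proof is correct and follows the same overall architecture as the paper's: normalize so that $A(x)=x$; in the case $\overline A(\bar y)=\bar x$ take $g=A^{-1}\circ\tau_{\gamma'}\circ A$ and identify $\Theta\circ g$ with $\Theta$ via Lemma~\ref{lem:liftbranched}; in the case where $\bar y$ is also fixed, extend a local deck transformation $\phi$ by conjugating with iterates of $A$ (corrected by lattice translations) and use Corollary~\ref{cor:Anearfix} for the exhaustion. Your explicit formula $\phi_n=\tau_{-\beta_n}\circ A^n\circ\phi\circ A^{-n}$ is exactly the unraveling of the paper's recursion $g_{n+1}=\tilde A\circ g_n\circ A^{-1}$ with $\tilde A=\tau_{-\beta}\circ A$, since $\tilde A^n=\tau_{-\beta_n}\circ A^n$.

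The one genuine difference is how you establish that the glued map $g$ is a homeomorphism. The paper shows surjectivity directly by proving $g(U^n)=\tilde A^n(V)$ and invoking Corollary~\ref{cor:Anearfix} a second time (now for $\tilde A$ and its fixed point $y$). You instead exploit the symmetry of the hypotheses to build a candidate inverse $g'$ by the same procedure with the roles of $x$ and $y$ swapped, and then force $g'\circ g=\id_{\R^2}=g\circ g'$ via Lemma~\ref{lem:liftbranched}. Both arguments are short; yours avoids the image computation (the paper's Claim~3) at the cost of rerunning the construction once more, while the paper's avoids the second construction at the cost of tracking images. Either way the substance is the same.
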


\begin{proof}
  Let the points $p\in S^2$, $x,y\in \R^2$, $\bar{x},\bar{y}\in T^2$ be given as in the
  statement. In particular, we assume that
  $\overline{A}(\bar{x})= \bar{x}$. Note that 
 $$ \pi(A(x))=(\pi\circ A)(x)= (\overline A\circ \pi)(x)=\overline A (\overline x)=\overline x, $$
  and so $x, A(x)\in  \pi^{-1}(\bar{x})$.   
This means that
  $\gamma_0\coloneqq x-A(x)\in \Gamma$. Define
  $A_0(u) = A(u) +\gamma_0=(\tau_{\gamma_0} \circ A)(u)$, $u\in
  \R^2$. Note that then $A_0(x)=A(x)+\gamma_0=x$, and so $ A_0$ has the fixed point $x$. Recall from  the discussion following
  \eqref{eq:Athetafpi34} that
  we may replace $A$ in this diagram with   $A_0$ (while all the other maps remain the same). 
 In other words, we are reduced to the case when $A(x)=x$ in addition to our other hypotheses.

  We now  consider the cases
  $\overline{A}(\bar{y})=\bar{y}$ and
  $\overline{A}(\bar{y})=\bar{x}$ separately.

  \smallskip
  {\em Case~I:} $\overline A(\bar y)=\bar x$.
%  $\overline A(\bar x)= \bar x$ and $\overline A(\bar y)=\bar x$.
  \smallskip

  This is the easy case. Note that 
$$
\pi(A(y))=(\pi\circ A)(y)= (\overline A\circ \pi)(y)=\overline A (\overline y)=\overline x,$$
 and so $A(x), A(y)\in
  \pi^{-1}(\bar{x})$. This implies that we can find  $\gamma\in \Gamma$ with
  $A(y)-A(x)=\gamma$. Then
  \begin{equation*}
    y=A^{-1}(A(x) + \gamma). 
  \end{equation*}
  Thus, $g\coloneqq A^{-1} \circ \tau_\ga \circ A$ is a
  homeomorphism on $\R^2$ with $g(x)=y$.

  We want  to show that $g\in G$, meaning we need to verify
  that $\Theta\circ g = \Theta$. 
  We know that $\tau_\ga \in G$. Using $f\circ \Theta=
  \Theta\circ A$ from \eqref{eq:Athetafpi34},  we obtain
  \begin{align*}
    f\circ \Theta\circ g
    &=
      \Theta \circ A \circ g= \Theta \circ \tau_\ga\circ A \\
    & =
      \Theta \circ A =f\circ \Theta.
  \end{align*}
 We now  apply  Lemma~\ref{lem:liftbranched} for the branched covering maps 
 $\Theta$ and $\Theta\circ g$. Note that
  $(\Theta\circ g)(x)=\Theta(y)=\Theta(x)=p$ and 
 $p\in S^2{\setminus } f^{-1}(\post(f))$. It follows that
  $\Theta=\Theta\circ g$. We proved the statement in Case I.

\smallskip
  {\em Case~II:}
  $\bar x$ and $\bar y$ are fixed points of $\overline A$.
\smallskip

This case is much harder, since there is no translation $\tau_\gamma$
with $\gamma\in \Gamma$ that maps $A(x)$ to $A(y)$. To construct
a deck transformation of $\Theta$ as in the statement, we first
show that we can obtain a local one.
  
  \smallskip
  \emph{Claim 1.} There is a homeomorphism $\tilde{g}\colon
  U\to V$ between  bounded and connected open neighborhoods $U$ and
  $V$ of
  $x$ and $y$, respectively, with $\Theta\circ \tilde{g} =
  \Theta$ on $U$.
  \smallskip
  
  To prove this, we note that our assumption
  $p\in S^2{\setminus } f^{-1}(\post(f))\sub S^2{\setminus }\post(f) $
  implies that $\overline \Theta$, and hence also $\Theta$, has
  no critical point over $p$, because 
  $\post(f) = \overline{\Theta}(\crit(\overline{\Theta}))$ (see 
   Lemma~\ref{lem:simobserv}~\ref{item:simobii}). In particular, $\Theta$ is a local homeomorphism
  near both points $x,y\in \Theta^{-1}(p)$. This implies  that there
  exist bounded and connected   open neighborhoods $U\subset \R^2$
  of $x$, $V\subset \R^2$ of $y$, and  $W\subset S^2$ of
  $p$, such that $\Theta|U\colon U\to W$ and
  $\Theta|V\colon V\to W$ are homeomorphisms. Defining
  $\tilde{g}\coloneqq (\Theta|V)^{-1} \circ (\Theta|U)$ on $U$
  gives the desired map, proving Claim~1.  \smallskip
  
  Now, the idea is to extend $\tilde g$ to a deck transformation
  on $\R^2$ using the dynamics of $A$ near its fixed point
  $x$. By Corollary~\ref{cor:Anearfix}, we know that
  $U\sub A^n(U)$ for all sufficiently large $n\in \N$. Replacing
  $A$ with such an iterate $A^n$ (and consequently $f$ with
  $f^n$, and $\overline{A}$ with $\overline{A}^n$), we may assume
  that $U \subset A(U)$.
  Note, we then still have
  $\overline A(\overline x)=\overline x$ and
  $\overline A(\overline y)=\overline y$ for the new map
  $\overline A$. We make the assumption $U\sub A(U)$ from now on.
  
We know that  $\overline A(\overline y)=\overline y$, but, in general, the point $y$ will not be a fixed point of $A$. 
 We have 
 $$ \pi(A(y))=(\pi\circ A)(y)= (\overline A\circ \pi)(y)=\overline A (\overline y)=\overline y,$$
 and so $y,A(y)\in \pi^{-1}(\overline y)$.
 It follows that   $A(y)=y+\ga$ for some
  $\ga\in \Gamma$. Therefore, if we define
  $\tilde A(u) = A(u)- \gamma= (\tau_{-\gamma} \circ A)(u)$ for
  $u\in \R^2$, then $\tilde A(y)=y$. 

  Define $U^n= A^n(U)$ for $n\in\N_0$. The sets $U^n$ are
  connected open sets containing $x$. We have $U^{n}\sub U^{n+1}$
  for $n\in \N_0$, and $\bigcup_{n\in \N_0}U^n =\R^2$.  The last
  fact follows from Corollary~\ref{cor:Anearfix}.

  We now define homeomorphisms $g_n$
  mapping $U^n$ into $\R^2$ recursively, by setting
  $g_0\coloneqq\tilde g$ on $U^0=U$, and
  \begin{equation*}
    g_{n+1}
    \coloneqq
    \tilde A  \circ g_n \circ A^{-1}
    \quad \text{on } U^{n+1}
  \end{equation*}
  for $n\in \N_0$. Note that this makes sense, because
  $A^{-1}(U^{n+1})=U^n$. This definition implies that
  \begin{equation*}
    \tau_\ga\circ g_{n+1}\circ A
    =
    A\circ g_n \quad \text{on } U^n
  \end{equation*}
  for all $n\in \N_0$.   

  One verifies by induction that $\Theta\circ g_n = \Theta$ on
  $U^n$ for all $n\in \N$.  Indeed, this is true for $n=0$ by
  definition of $g_0=\tilde g$.  If it is true for $n\in \N_0$,
  then it is also true for $n+1$, because 
\begin{align*}
  \Theta \circ g_{n+1}
  &= \Theta \circ \tau_{-\ga}\circ A \circ g_n \circ A^{-1} \\
  &=   \Theta  \circ A \circ g_n \circ A^{-1} \\
  &= f \circ \Theta  \circ g_n \circ A^{-1} =  f \circ \Theta  \circ A^{-1} \\
  &=  \Theta  \circ A  \circ A^{-1} =\Theta. 
\end{align*}   
By induction, one also shows that $g_n(x)=y$ for all $n\in
\N_0$. Indeed, $g_0(x)=\tilde g(x)=y$, and if this is true for
$n\in \N_0$, then it is also true for $n+1$, because
\begin{align*}
  g_{n+1}(x)
  &=
    (\tilde  A \circ g_n \circ A^{-1})(x)\\
  & =
    (\tilde A \circ g_n)(x) = \tilde  A(y) =y. 
\end{align*} 

\smallskip
\emph{Claim~2.} We have $g_{n+1}|U^n = g_n$ for all $n\in \N_0$.
\smallskip

To see this,  we want to apply Lemma~\ref{lem:liftbranched}
to the branched covering map
$\Theta\: \R^2\ra S^2$,
 and the maps $g_n$ and $g_{n+1}|U_n$.  
We know that $$\Theta\circ g_n=\Theta = \Theta\circ (g_{n+1}|U_n)$$ on the connected open set  $U_n$. Moreover, 
 $g_n(x)=y=g_{n+1}(x)$ and the point $y$
which lies in the fiber over $p=\Theta(y)$ not containing any
critical point of $\Theta$. Claim~2  follows.   \smallskip

Therefore, each homeomorphism
$g_n$ extends the previous one. Since the
sets $U^n$, $n\in \N_0$, exhaust $\R^2$, there exists a unique
map $g\: \R^2 \ra \R^2$, such that $g|U^n=g_n$ for all
$n\in \N_0$. It is clear that $g$ is continuous and  injective, because the maps $g_n$ have these
properties.  Moreover, it is clear that $g(x)=y$ and
$\Theta=\Theta\circ g$ on $\R^2$.

To finish the proof in the Case~II at hand, it remains to show
that $g\: \R^2\ra \R^2$ is surjective. To do this, let us shift our attention to
the images $g(U^n)$. 

\smallskip
\emph{Claim 3.} We have $g(U^n)= g_n(U^n)= \tilde{A}^n(V)$ for all $n\in
\N_0$. 
\smallskip

Recall that $V=\tilde g(U)$ was the neighborhood of $y$  defined in Claim~1. Thus, Claim~3 is true for
$n=0$, since $$g(U^0)=g_0(U^0)= \tilde g(U)=V =\tilde
A^0(V). $$ Moreover, if it is true for $n\in \N_0$, then it is also
true for $n+1$, because
\begin{align*}
  g(U^{n+1})
  &=
    g_{n+1}(U^{n+1})=g_{n+1}(A(U^n))\\
  &=
    (\tilde A \circ g_n \circ A^{-1})(A(U^n))
    =
    (\tilde A \circ g_n)(U^n)\\
  &=
    \tilde A(\tilde A^n(V)) = \tilde A^{n+1}(V),
\end{align*} 
proving Claim~3.
\smallskip

Recall that in \eqref{eq:Athetafpi34}, we can replace $A$ with 
$\tilde A= \tau_{-\ga}\circ A$. Since the map $\tilde A$ has the fixed
point $y$, we can  apply Corollary~\ref{cor:Anearfix} to $\tilde{A}$;
so the images of the neighborhood $V$ of $y$ under
iterates of $\tilde A$ will exhaust $\R^2$, i.e., we have
$ \bigcup_{n\in \N} \tilde A^n(V)=\R^2$. Using Claim~3, we
conclude that $g$ is surjective. This finishes the proof of
Case~II. The statement follows.  
\end{proof}

We now show transitivity of the action of $G$ on the fibers of
$\Theta$.

\begin{lemma}
  \label{lem:decktransonfib}
  Let $p\in S^2$ and $x,y\in \Theta^{-1}(p)$. Then there exists
  $g\in G$, such that $g(x)=y$.
\end{lemma}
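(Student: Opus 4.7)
The plan is to reduce to Lemma~\ref{lem:restrtrans} via a path-lifting step and then to apply the two cases of that lemma in combination on a suitably iterated system. Since $f$ is expanding, its periodic points are dense in $S^{2}$; as $\post(f)$ is finite, we may pick a periodic point $p^{*}\in S^{2}\setminus\post(f)$. Choose a path $\alpha\colon [0,1]\to S^{2}$ with $\alpha(0)=p$, $\alpha(1)=p^{*}$, and $\alpha((0,1])\subset S^{2}\setminus\post(f)$, and lift it through $x$ and through $y$ to continuous paths $\widetilde\alpha,\widetilde\beta\colon [0,1]\to\R^{2}$ with $\widetilde\alpha(0)=x$, $\widetilde\beta(0)=y$, and $\Theta\circ\widetilde\alpha=\Theta\circ\widetilde\beta=\alpha$; their restrictions to $(0,1]$ are uniquely determined since $\Theta$ is an unbranched cover over $S^{2}\setminus\post(f)$. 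Write $x^{*}=\widetilde\alpha(1)$ and $y^{*}=\widetilde\beta(1)\in\Theta^{-1}(p^{*})$. If we can find $g\in G$ with $g(x^{*})=y^{*}$, then $g\circ\widetilde\alpha$ and $\widetilde\beta$ are lifts of $\alpha$ agreeing at $t=1$, hence agree on $(0,1]$ by uniqueness, and continuity at $t=0$ yields $g(x)=y$.

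\textbf{Handling periodic $p^{*}$ via iterates.} Let $n$ be the $f$-period of $p^{*}$ and set $\bar x=\pi(x^{*})$, $\bar y=\pi(y^{*})$. Forward invariance of $\post(f)$ places the $f$-cycle of $p^{*}$ inside $S^{2}\setminus\post(f)$, so the $\overline A$-orbits of $\bar x$ and $\bar y$ are preperiodic in the finite set $\bigcup_{k=0}^{n-1}\overline\Theta^{-1}(f^{k}(p^{*}))$. Choose $N\in\N$ large enough and sufficiently divisible that $\bar x^{\sharp}:=\overline A^{N}(\bar x)$ and $\bar y^{\sharp}:=\overline A^{N}(\bar y)$ lie in the periodic parts of their orbits and are fixed by $\overline A^{N}$, and such that $n\mid N$, so $f^{N}(p^{*})=p^{*}$. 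Writing $x^{\sharp}=A^{N}(x^{*})$ and $y^{\sharp}=A^{N}(y^{*})$, we have $\Theta(x^{\sharp})=\Theta(y^{\sharp})=p^{*}\notin\post(f)$, whence $p^{*}\notin (f^{N})^{-1}(\post(f))$. Apply Lemma~\ref{lem:restrtrans} to the iterated system $(f^{N},\overline A^{N},A^{N})$ three times: Case~I on the pair $(\bar x^{\sharp},\bar x)$ yields $g_{1}\in G$ with $g_{1}(x^{\sharp})=x^{*}$; Case~I on $(\bar y^{\sharp},\bar y)$ yields $g_{2}\in G$ with $g_{2}(y^{\sharp})=y^{*}$; and Case~II on $(\bar x^{\sharp},\bar y^{\sharp})$, both fixed by $\overline A^{N}$ and lying over $p^{*}$, yields $g_{3}\in G$ with $g_{3}(x^{\sharp})=y^{\sharp}$. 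Then $g:=g_{2}\circ g_{3}\circ g_{1}^{-1}\in G$ satisfies $g(x^{*})=g_{2}(g_{3}(x^{\sharp}))=g_{2}(y^{\sharp})=y^{*}$.

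\textbf{Main obstacle.} The principal technical point is the simultaneous orchestration of the three applications of Lemma~\ref{lem:restrtrans}: the exponent $N$ must place both $\overline A$-orbits into their periodic parts, satisfy the fixedness hypothesis of Case~II for both $\bar x^{\sharp}$ and $\bar y^{\sharp}$, and ensure $f^{N}(p^{*})=p^{*}$ so that all three applications share the common base point $p^{*}$; the needed condition $p^{*}\notin (f^{N})^{-1}(\post(f))$ then follows automatically from $p^{*}\notin\post(f)$.
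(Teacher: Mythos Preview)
Your proof is correct and follows essentially the same strategy as the paper: reduce via path-lifting to a periodic point $p^{*}\notin\post(f)$, pass to a suitable iterate, and invoke both cases of Lemma~\ref{lem:restrtrans}. The paper organizes matters slightly differently. It first replaces $f$ by an iterate so that $p^{*}$ becomes a \emph{fixed} point and every point of $\overline\Theta^{-1}(p^{*})$ is either fixed by $\overline A$ or sent by $\overline A$ to a fixed point; it then shows full transitivity of $G$ on that one fiber (using a single reference fixed point and at most two applications of Lemma~\ref{lem:restrtrans}), and only afterwards performs the path-lifting extension to an arbitrary fiber. Your version reverses the order---path-lifting first---and, rather than setting up transitivity on an entire fiber, tailors the exponent $N$ to the two specific points $\bar x,\bar y$ at hand, combining three applications of Lemma~\ref{lem:restrtrans} (two of Case~I to move $x^{*},y^{*}$ to points over $\overline A^{N}$-fixed torus points, one of Case~II to connect those). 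The difference is purely organizational; the substance is the same.
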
 

\begin{proof}
  We will first show the existence of a point
  $p_0\in S^2 {\setminus } \post(f)$ for which $G$ acts transitively
  on the fiber $\Theta^{-1}(p_0)$, and then deal with the general
  case.
 
  The periodic points of $f$ are dense in $S^2$ (see
  \cite[Corollary~9.2]{BM17}); in particular, we can find a
  periodic point $p_0\in S^2{\setminus } \post(f)$.  By replacing
  $f$ with suitable iterates $f^n$ (and $\overline{A}$ with $\overline{A}^{\,n}$),  we may assume that $p_0$ is a fixed point
  of $f$. Note that then we still have
  $p_0\in S^2{\setminus } \post(f)$, because the postcritical sets
  of a Thurston map and any of its iterates agree.
 
Since $p_0$ is a fixed point of $f$, the map  $\overline A$ sends  the set $\overline \Theta^{-1}(p_0)$
 into itself. It follows that each point in $\overline \Theta^{\,-1}(p_0)$ is either a periodic point of $\overline A$ or is mapped to a periodic point of $\overline A$  under all sufficiently high iterates of 
 $\overline A$. If we again replace $f$ and $\overline A$ with  carefully chosen  iterates, we can reduce ourselves to the following situation: $p_0\in S^2{\setminus } \post(f)$ is a fixed point of $f$, 
the set  $\overline \Theta^{-1}(p_0)$ contains at least one fixed point of $\overline A$, 
 and each point in  $\overline \Theta^{-1}(p_0)$ is either a fixed point of $\overline A$ or mapped to a fixed point by $\overline A$. Moreover, since $p_0$ is a fixed point of $f$, and $p_0\in S^2{\setminus } \post(f)$, we  have $p_0\in S^2{\setminus } f^{-1}(\post(f))$.  We pick a fixed point $\bar x\in \overline \Theta^{-1}(p_0)$ of $\overline A$, and a point $x\in \R^2$  with $\pi(x)=\bar x$. Then $x\in \Theta^{-1}(p_0)$.  
 
 Now, let  $y\in \Theta^{-1}(p_0)$ be arbitrary.  Then $\bar y\coloneqq\pi(y)\in \overline \Theta^{-1}(p_0)$. 
 By our choices, $\bar y$ is a fixed point of $\overline A$ or $\overline z\coloneqq\overline A(\overline y)$ is a fixed point of $\overline A$. 
 In the first case, there exists $g\in G$, such that $g(x)=y$ by the first part of Lemma~\ref{lem:restrtrans}. 
 
 In the second case, $\overline z$ is a fixed point of
 $\overline A$. Pick $z\in \R^2$, such that $\pi(z)=\overline
 z$. Then $z\in \overline \Theta^{-1}(p_0)$, and again there
 exists $g_1\in G$ with $g_1(x)=z$. Moreover, by
 Lemma~\ref{lem:restrtrans} there exists $g_2\in G$, such that
 $g_2(z)=y$. Then $g\coloneqq g_2\circ g_1\in G$ and $g(x)=y$.
 
 We see that $x$ can be sent to any  point in $\Theta^{-1}(p_0)$ by a suitable element $g$ in the group $G$; it follows that $G$ acts transitively on $\Theta^{-1}(p_0)$.  
 
 Now let $p\in S^2$ and $x,y\in \Theta^{-1}(p)$ be arbitrary. Then we can find a path 
 $\alpha\:[0,1]\ra  S^2$ with $\alpha(0)=p$ and $\alpha(1)=p_0$, so that $\alpha((0,1])\sub  S^2{\setminus } \post(f)$. Therefore, $\alpha$ lies in  $ S^2{\setminus } \post(f)$ with the possible exception of its  initial  point $p$.  Since $x,y\in \Theta^{-1}(p)$,  we can lift the path $\alpha$
  under the branched covering map $\Theta\:\R^2\ra S^2$ to paths $\beta_1,\beta_2\:[0,1]\ra \R^2$, such that 
 $$\Theta\circ \beta_1=\Theta\circ \beta_2 =\alpha, $$
  $\beta_1(0)=x$, and $\beta_2(0)=y$ (see \cite[Lemma~A.18]{BM17}).

 Let $x'=\beta_1(1)$ and $y'=\beta_2(1)$. Then 
 $$\Theta(x')=\Theta (\beta_1(1))=\alpha(1)=p_0, $$
 and so $x'\in  \Theta^{-1}(p_0)$. Similarly, $y'\in \Theta^{-1}(p_0)$. 
 
 By 
 the first part of the proof, there exists $g\in G$, such that $g(x')=y'$.
 Let $\beta_3\coloneqq g\circ \beta_1$. Then
 $$\beta_3(1)=g(\beta_1(1))=g(x')=y'=\beta_2(1), $$ and 
 $$\Theta\circ \beta_3=\Theta\circ g\circ \beta_1=\Theta\circ  \beta_1=\alpha. $$
Therefore, $\beta_3$ is a lift of $\alpha$ under $\Theta$ with the same
endpoint as $\beta_2$. Since $\Theta$ is a covering map over
$S^2{\setminus } \post(f)$ and $\alpha|(0,1]\sub S^2{\setminus } \post(f)$,
it follows that $\beta_2|(0,1]=\beta_3|(0,1]$
(see \cite[Lemma~A.6 (i)]{BM17}).
%(see Lemma~\ref{lem:liftsofcov}~\ref{item:liuniq}).
Hence $\beta_3(0)=\beta_2(0)$ by continuity,  and so 
$$ g(x)=g(\beta_1(0))=\beta_3(0)=\beta_2(0)=y. $$ 
This shows that $G$ acts transitively on the fiber $\Theta^{-1}(p)$.    
\end{proof} 

We are now ready to prove the implication \ref{item:quo1}
$\Rightarrow$ \ref{item:quo2} in Theorem~\ref{thm:quo}. 

\begin{prop} \label{prop:quottorendo=para}
Suppose $f\: S^2\ra S^2$ is an expanding Thurston map that is a quotient of a torus endomorphism. Then $f$ has a parabolic orbifold. 
\end{prop}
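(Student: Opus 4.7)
The plan is to deduce parabolicity from the criterion given in Lemma~\ref{lem:simobserv}~\ref{item:simobiii}, namely that $\deg(\overline\Theta, \bar{x}) = \deg(\overline\Theta, \bar{y})$ whenever $\overline\Theta(\bar{x}) = \overline\Theta(\bar{y})$. All the substantial work for this has already been done in the section, culminating in Lemma~\ref{lem:decktransonfib}; what remains is to assemble the pieces and transfer the information from the map $\Theta$ on $\R^2$ to the map $\overline\Theta$ on the torus.

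First I would reduce the claim about $\overline\Theta$ to an analogous claim about $\Theta = \overline\Theta\circ \pi$. Since $\pi\colon \R^2\to T^2$ is an (unbranched) covering map, it has no critical points, so $\deg(\pi, u)=1$ for every $u\in \R^2$. The multiplicativity of local degrees \eqref{eq:locdegmult} then gives $\deg(\Theta, u) = \deg(\overline\Theta, \pi(u))$ for all $u\in \R^2$. Hence it suffices to verify that $\deg(\Theta, \cdot)$ is constant on every fiber $\Theta^{-1}(p)$, $p\in S^2$.

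Next I would apply Lemma~\ref{lem:decktransonfib}. Given $p\in S^2$ and $x,y\in \Theta^{-1}(p)$, this lemma produces a deck transformation $g\in G$ with $g(x)=y$. Since $g$ is a homeomorphism, $\deg(g, x)=1$, and the identity $\Theta\circ g = \Theta$ combined with \eqref{eq:locdegmult} yields
\[
\deg(\Theta, x) \;=\; \deg(\Theta\circ g, x) \;=\; \deg(\Theta, g(x))\cdot \deg(g, x) \;=\; \deg(\Theta, y).
\]
Thus $\deg(\Theta, \cdot)$ is constant on $\Theta^{-1}(p)$, as required.

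Finally, to conclude, take arbitrary $\bar{x}, \bar{y}\in T^2$ with $\overline\Theta(\bar{x}) = \overline\Theta(\bar{y}) = p$, and pick lifts $x\in \pi^{-1}(\bar{x})$ and $y\in \pi^{-1}(\bar{y})$; then $\Theta(x)=\Theta(y)=p$, so the previous step gives $\deg(\Theta, x)=\deg(\Theta, y)$, and the first step translates this to $\deg(\overline\Theta, \bar{x}) = \deg(\overline\Theta, \bar{y})$. Lemma~\ref{lem:simobserv}~\ref{item:simobiii} then yields that $f$ has a parabolic orbifold. The main obstacle in the whole argument was not this final assembly but the transitivity statement Lemma~\ref{lem:decktransonfib}, whose proof rested on the expansion of the linear part $L$ (Corollary~\ref{cor:Lexp}) and the dynamical extension-by-iteration procedure used to upgrade local deck transformations near fixed points of $A$ into global ones.
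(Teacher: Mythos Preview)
Your proposal is correct and follows essentially the same approach as the paper's proof: both invoke Lemma~\ref{lem:decktransonfib} to obtain a deck transformation $g\in G$ sending $x$ to $y$, use $\deg(g,x)=1$ and $\deg(\pi,\cdot)=1$ together with multiplicativity of local degrees to pass between $\deg(\Theta,\cdot)$ and $\deg(\overline\Theta,\cdot)$, and then conclude via Lemma~\ref{lem:simobserv}~\ref{item:simobiii}. The only difference is presentational---the paper compresses the computation into a single chain of equalities rather than separating the reduction step from the transitivity step.
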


\begin{proof} We can use all the previous considerations 
for the maps as in \eqref{eq:Athetafpi34} and the deck transformation group $G$ of $\Theta$.

Let $p\in S^2$ be arbitrary,  and $\bar x, \bar y\in \overline \Theta^{-1}(p)$. 
Then, there exist points $x,y\in \R^2$, such that $\pi(x)=\bar x$ and $\pi(y)=\bar y$. 
Then $\Theta(x)=(\overline \Theta \circ \pi )(x) = \overline \Theta(\bar x)= p=\Theta(y). $
By Lemma~\ref{lem:decktransonfib} there exists $g\in G$, such that $g(x)=y$. Since $g$ is a homeomorphism, $\pi$ is a local homeomorphism, and both maps preserve orientation,  we have 
$ \deg(g, x)=1$ and  $\deg(\pi, x) =\deg(\pi, y)=1$. Since local degrees are multiplicative under compositions (see \eqref{eq:locdegmult}), we conclude  
\begin{align*}
\deg(\overline \Theta, \bar y)&= \deg(\overline \Theta, \bar y) \cdot \deg(\pi, y)= \deg(\Theta,y)\\
&= \deg(\Theta,y) \cdot \deg(g, x)= \deg (\Theta\circ g, x)\\
 &=\deg (\Theta, x) = \deg(\overline \Theta, \bar x).
\end {align*}  
Therefore, the local degree of $\overline \Theta$ in each fiber $\overline\Theta^{-1}(p)$, $p\in S^2$,  is constant. Now  Lemma~\ref{lem:simobserv}~\ref{item:simobiii} implies that $f$ has a parabolic orbifold.   
\end{proof}

\section{From Parabolic Orbifolds to Latt\`es-Type Maps}
\label{sec:charLtype}

In this section, we will prove the implication  \ref{item:quo2} 
$\Rightarrow$ \ref{item:quo3} in Theorem~\ref{thm:quo}. 
We first establish an auxiliary fact that helps us in identifying 
Latt\`es-type maps. 
\begin{lemma}
  \label{lem:Lattes_type_conj}
  Let $f\colon S^2\to S^2$ be  a map that is topologically conjugate to a  Latt\`{e}s-type map. Then $f$ itself is Latt\`{e}s-type map.\end{lemma}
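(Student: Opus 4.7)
The plan is to pull the Lattès-type structure back across the topological conjugacy. Suppose $h\colon S^2\to \widehat S^2$ is a homeomorphism conjugating $f$ to a Lattès-type map $\widehat f$, so that $h\circ f = \widehat f\circ h$. By Definition~\ref{def:Lattestype} applied to $\widehat f$, there exist a crystallographic group $G$, an affine map $A\colon \R^2\to \R^2$ with $\det(L_A)>1$ that is $G$-equivariant, and a branched covering map $\widehat\Theta\colon \R^2\to \widehat S^2$ induced by $G$, with $\widehat f\circ \widehat\Theta=\widehat\Theta\circ A$.

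I would then define $\Theta\coloneqq h^{-1}\circ \widehat\Theta\colon \R^2\to S^2$. Since $\widehat\Theta$ is a branched covering map and $h^{-1}$ is a homeomorphism, Lemma~\ref{lem:2_3_branched}~\ref{item:2_out3_1} (with one of the factors a homeomorphism, viewed as a branched covering map of local degree $1$) shows that $\Theta$ is itself a branched covering map. Moreover, $\Theta$ is induced by the same group $G$: for $u,v\in \R^2$, we have $\Theta(u)=\Theta(v)$ if and only if $\widehat\Theta(u)=\widehat\Theta(v)$, which in turn holds if and only if $v=g(u)$ for some $g\in G$. The group $G$, the affine map $A$, and $L_A$ with $\det(L_A)>1$ are unchanged, and $G$-equivariance of $A$ does not depend on $\Theta$.

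It remains to verify the commutation $f\circ \Theta=\Theta\circ A$. This is a direct computation:
\begin{equation*}
f\circ \Theta = f\circ h^{-1}\circ \widehat\Theta = h^{-1}\circ \widehat f\circ \widehat\Theta = h^{-1}\circ \widehat\Theta\circ A = \Theta\circ A.
\end{equation*}
Hence $f$ satisfies all the requirements of Definition~\ref{def:Lattestype}, and is therefore a Lattès-type map.

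There is no real obstacle here: the whole point is that Definition~\ref{def:Lattestype} refers only to the topological data of $S^2$, the branched covering $\Theta$, and the affine/group theoretic structure on $\R^2$, none of which is disturbed by composing the branched covering with a homeomorphism of the target sphere. The only small care needed is in confirming that $h^{-1}\circ \widehat\Theta$ is a branched covering map and that it is induced by exactly the same group $G$, both of which are immediate.
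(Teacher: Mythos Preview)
There is a genuine gap: you have not addressed the possibility that the conjugating homeomorphism reverses orientation. In this paper, a branched covering map is by definition locally modeled on $z\mapsto z^{d}$ via \emph{orientation-preserving} charts (see \eqref{eq:z^dbrcov}), so every branched covering map is orientation-preserving. If $h$ happens to reverse orientation, then $h^{-1}\circ\widehat\Theta$ is orientation-reversing and therefore is \emph{not} a branched covering map in this sense. Your appeal to Lemma~\ref{lem:2_3_branched}~\ref{item:2_out3_1} breaks down precisely here: an orientation-reversing homeomorphism cannot be ``viewed as a branched covering map of local degree~$1$,'' so the lemma does not apply.

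The paper handles this by splitting into cases. When $h$ preserves orientation, your argument is essentially what is done. When $h$ reverses orientation, the paper precomposes with complex conjugation $\sigma(z)=\overline z$ on $\C\cong\R^2$: one sets $\Theta\coloneqq h^{-1}\circ\widehat\Theta\circ\sigma$, and simultaneously conjugates the crystallographic group and the affine map by $\sigma$, defining $G\coloneqq\sigma\widehat G\sigma$ and $A\coloneqq\sigma\circ\widehat A\circ\sigma$. The two orientation reversals cancel, so $\Theta$ is again an honest branched covering map; and since $\sigma$ is an $\R$-linear isometry, $G$ is still crystallographic, $A$ is still affine and $G$-equivariant, and $\det(L_A)=\det(L_{\widehat A})>1$. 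Once you insert this correction, the rest of your verification goes through unchanged.
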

  
As we will see, the proof is fairly straightforward. There is a
subtlety though that arises from the fact that, by definition, a
branched covering map is orientation-preserving (see
\eqref{eq:z^dbrcov}), but a homeomorphism as in the definition of
topological conjugacy may actually reverse orientation. To
address this, we have to compensate by complex conjugation on
$\C$ in a suitable way.

In the following, we denote by $\sigma\: \C\ra \C$, $\sigma(z)=\overline z$ for $z\in \C$, complex conjugation on  $\C$. Note that this is an $\R$-linear orientation-reversing isometry on $\C$ with  $\sigma^{-1}=\sigma$. 

\begin{proof}[Proof of Lemma~\ref{lem:Lattes_type_conj}]
 Suppose  $f\colon S^2 \to S^2$  is a map 
 that is topological conjugate to a Latt\`es-type map 
 $\widehat f\: \widehat S^2\ra  \widehat S^2$. Here   $\widehat S^2$ is another topologically $2$-sphere.   Both  $S^2$ and $\widehat S^2$ carry some fixed orientations. 

 According
  to Definition~\ref{def:Lattestype}, there
  exists a crystallographic group $ \widehat  G$ acting on $\C\cong \R^2$, an
  $ \widehat  G$-equivariant (real) affine map $ \widehat  A\colon \C\to \C$, and a
  branched covering map $ \widehat  \Theta\colon \C\to \widehat  S^2$ induced by
  $ \widehat  G$ with $ \widehat  f\circ  \widehat  \Theta =  \widehat  \Theta \circ  \widehat  A$. Moreover, 
  the linear part $L_{ \widehat A} $ of $ \widehat A$ satisfies 
  $\det(L_{ \widehat A}) >1$. 
  
 Since $f$ and $\widehat f $ are topologically conjugate, there exists a homeomorphism $h\:  \widehat S^2 \ra S^2$, such that 
 $f\circ h=h\circ  \widehat f$. We now have to distinguish two cases according to whether the homeomorphism $h\:  \widehat S^2 \ra S^2$ preserves or reverses orientation. We will treat the latter, slightly more difficult case in detail, and then comment on the small modifications for the former case. 
 
 Therefore, we now assume that $h\:  \widehat S^2 \ra S^2$ is orientation-reversing. We   define
 $\Theta\coloneqq h\circ  \widehat \Theta \circ \sigma$. 
 Then $\Theta\: \C\ra S^2$  is a branched covering map, as easily follows  from 
 the definitions and the fact that $ \widehat  \Theta\: \C\ra  \widehat  S^2$ is a branched covering map. Here, it is important that in the definition of $\Theta$, we compensate postcomposition of  $\widehat \Theta$ with  the orientation-reversing homeomorphism $h$ by precomposition with  the orientation-reversing homeomorphism  $\sigma$ to make $\Theta$ orientation-preserving. 

 We  conjugate everything else by $\sigma$. More precisely, we define 
 $G\coloneqq \{ \sigma \circ g \circ \sigma: g\in \widehat  G\} $.
 It is clear that $G$ is a crystallographic group on $\C$ and  
 $\Theta$ is induced by $G$.
 Moreover, we let
 $A \coloneqq  \sigma \circ \widehat A \circ \sigma$. Then, 
 $A$ is a (real) affine map on $\C\cong \R^2$ that is $G$-equivariant. For the linear part $L_A$ of $A$, we have 
 $L_A=\sigma\circ L_{ \widehat  A}\circ \sigma$, and so 
 $\det(L_A)=\det(L_{ \widehat  A})>1$. 

We can summarize the relations
  of all the  maps considered in the following 
  diagram, which is obviously commutative:
  \begin{equation*}
  \xymatrix{
    \C \ar[r]^{A} \ar[d]_{\sigma} \ar@/_2pc/[ddd]_{\Theta} 
    & \C \ar[d]^{\sigma}\ar@/^2pc/[ddd]^{\Theta}
    \\
    \C \ar[r]^{\widehat A}\ar[d]_{\widehat\Theta} 
    & \C\ar[d]^{\widehat \Theta}
    \\
   \widehat S^2 \ar[r]^{\widehat f}\ar[d]_h & \widehat S^2 \ar[d]^h
  \\
  S^2 \ar[r]^{f} & S^2 \rlap{.}
  }
\end{equation*}
It follows that $f$ is a Latt\`es-type map. 

If $h$ is orientation-preserving, the map $\sigma$ is not needed
in the previous argument. Formally, we can just replace $\sigma$
with the identity map on $\C$. Therefore, $f$ is also a
Latt\`es-type map in this case. The statement follows.
\end{proof}

After this preparation, we now prove the implication
\ref{item:quo2} $\Rightarrow$ \ref{item:quo3} in
Theorem~\ref{thm:quo}. 

\begin{prop}
  \label{prop:para_Lattes}
  Let $f\colon S^2\to S^2$ be an expanding Thurston map with
 a parabolic orbifold. Then $f$ is a Latt\`{e}s-type map. 
\end{prop}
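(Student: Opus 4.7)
The plan is to chain together the cited facts from Section~\ref{sec:background} and bridge them with a dedicated expansion argument for the Latt\`es-type representative. Let $f\colon S^2\to S^2$ be an expanding Thurston map with parabolic orbifold. First, I would invoke the fact (noted in the introduction and essentially from \cite[Chapter 7]{BM17}) that $f$ has no periodic critical points. Indeed, the only parabolic signatures in \eqref{eq:list} admitting $\alpha_f=\infty$, and hence periodic critical points, are $(\infty,\infty)$ and $(2,2,\infty)$, which classify up to Thurston equivalence as power maps $z\mapsto z^n$ and Chebyshev polynomials; these families contain no expanding Thurston maps, so the signature of $\OC_f$ must lie in the list \eqref{eq:sign_Latt_type} and $\alpha_f$ is finite.

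With parabolic orbifold and no periodic critical points, Proposition~\ref{prop:fpara_Latt_type} produces a Latt\`es-type map $g\colon \widehat S^2\to \widehat S^2$ that is Thurston equivalent to $f$, witnessed by an affine map $A$ with linear part $L=L_A$ and a branched covering $\Theta\colon \R^2\to \widehat S^2$ induced by a crystallographic group $G$, satisfying $g\circ \Theta=\Theta\circ A$.

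The central step is to establish that $g$ is itself expanding; by Proposition~\ref{prop:Lattes_type_exp} this is equivalent to proving that $L$ is expanding. I would exploit the Thurston equivalence $h_0\circ f=g\circ h_1$, with $h_0,h_1$ isotopic relative to $\post(f)$, to transfer the combinatorial expansion of $f$ to $g$: the equivalence induces a correspondence between the preimage cell structures of $f$ and $g$ with respect to corresponding Jordan curves through their postcritical sets, and since $f$ is expanding, the diameters of its preimage cells tend to zero. Lifting to $\R^2$ via $\Theta$, iterated preimages of $g$-cells correspond to images under $A^{-n}$ of a fixed collection of cells covering a fundamental domain for $G$; contraction of these under $A^{-n}$ would fail in the direction of any eigenvalue $\lambda$ of $L$ with $\abs{\lambda}\le 1$, since the coarse comparison \eqref{eq:LAclose} between $A^n$ and $L^n$ up to a bounded additive error forces the asymptotics of $A^{-n}$ to be governed by those of $L^{-n}$. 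Hence every eigenvalue of $L$ satisfies $\abs{\lambda}>1$, so $L$, and therefore $g$, is expanding.

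With both $f$ and $g$ now expanding and Thurston equivalent, Theorem~\ref{thm:Th_equiv_top_conj} yields a topological conjugacy between them, and Lemma~\ref{lem:Lattes_type_conj} upgrades this to the conclusion that $f$ is itself a Latt\`es-type map. The main obstacle is the expansion step for $g$: while Thurston equivalence is a purely topological/combinatorial relation, extracting a spectral inequality on $L$ from the metric mesh-shrinking property of $f$ requires a careful tracking of tiles through the equivalence and then through the orbifold cover $\Theta$. This is essentially the reverse direction of the dynamical considerations in Section~\ref{sec:para}, where expansion of $f$ was used to produce expansion of $A$; here one must funnel that expansion back into the affine datum $L$ of the representative $g$.
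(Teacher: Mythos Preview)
Your overall architecture matches the paper's: rule out periodic critical points, invoke Proposition~\ref{prop:fpara_Latt_type} to obtain a Latt\`es-type representative, show that representative is expanding, then apply Theorem~\ref{thm:Th_equiv_top_conj} and Lemma~\ref{lem:Lattes_type_conj}. The gap is precisely where you flag it yourself: the step ``$g$ is expanding''.

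Your proposed mechanism---transfer mesh-shrinking from $f$ to $g$ through the Thurston equivalence, then read off a spectral bound on $L$---does not go through. A Thurston equivalence gives, by iterated lifting of the isotopy, homeomorphisms $h_n\colon S^2\to\widehat S^2$ with $h_0\circ f^n=g^n\circ h_n$, so there is indeed a bijection between $n$-tiles of $f$ and $n$-tiles of $g$. But this bijection is realized by a \emph{different} homeomorphism $h_n$ at each level, with no uniform control on distortion, so the fact that $f$-tiles shrink says nothing about diameters of $g$-tiles. (This is exactly why Thurston equivalence does not imply topological conjugacy without an expansion hypothesis on \emph{both} sides.) Your appeal to \eqref{eq:LAclose} is also misplaced: that estimate belongs to the setting of Section~\ref{sec:para}, where $A$ is a lift of a torus endomorphism over $f$; for the affine map attached to $g$ the comparison with its linear part is trivial and carries no information about $f$.

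The paper closes this gap by a case split on the signature. When $\#\post(f)=3$ (signatures $(2,4,4)$, $(2,3,6)$, $(3,3,3)$) one realizes $f$ by a rational map $R$; since $R$ has parabolic orbifold and finite ramification function it has no periodic critical points, hence is automatically expanding, and the conjugacy follows. For signature $(2,2,2,2)$ the paper does not treat the Latt\`es-type representative $\widehat f$ as a black box: it uses that the \emph{construction} in the proof of Proposition~\ref{prop:fpara_Latt_type} (see \cite[pp.~74--77]{BM17}) exhibits $f$ itself as a quotient of a torus endomorphism, and that the linear part $L_{\widehat A}$ of the affine map for $\widehat f$ can be taken equal to the map $L$ of \eqref{eq:ALtauga} associated to $f$. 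Now Corollary~\ref{cor:Lexp}---whose proof uses the expansion of $f$, not of $\widehat f$---gives that $L=L_{\widehat A}$ is expanding, and Proposition~\ref{prop:Lattes_type_exp} yields expansion of $\widehat f$. In short, the missing idea is that the linear datum of the representative is the \emph{same} linear map already controlled by Section~\ref{sec:para}, so one never needs to push expansion across a bare Thurston equivalence.
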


\begin{proof}
  Let $f$ be as in the statement, and $\alpha=\alpha_f$ be the ramification function of $f$. 
 Since $f$ has a  parabolic 
 orbifold $\mathcal{O}_f=(\CDach, \alpha)$,  the signature of 
  $\mathcal{O}_f$ is in the list (see \eqref{eq:list})
  $$(\infty, \infty),\,  (2,2, \infty),\,  (2,4,4),\,  (2,3,6),\,  (3,3,3),\,  (2,2,2,2). $$  
  The fact that $f$, or equivalently $f^2$, is expanding (see
  \cite[Lemma~6.5]{BM17}), rules out the signatures
  $(\infty, \infty)$ and $(2,2, \infty)$. Indeed, if a Thurston
  map $f$ has one of these signatures, then $f$ or $f^2$ is a
  {\em Thurston polynomial} (i.e., there is a point that is
  completely invariant under the map); see the discussion after
  \cite[Lemma 7.4]{BM17}. However, no Thurston polynomial is expanding
  (see \cite[Lemma 6.8]{BM17}). This means that $\alpha$ does not
  attain the value $\infty$; so $f$ has no periodic critical points (see \cite[Proposition 2.2~(ii)]{BM17})  and the signature of  
   $\mathcal{O}_f$ is among 
  $$(2,4,4),\,  (2,3,6),\,  (3,3,3),\,  (2,2,2,2). $$  
  
  In the first three cases, $f$ has precisely three postcritical points, and is hence Thurston equivalent to a rational Thurston map $R\: \CDach \ra \CDach$  
  (see \cite[Theorem~7.2 and Lemma 2.5]{BM17}). Since $f$ and $R$ are Thurston equivalent,
  the orbifolds of $f$ and $R$ have the same signatures (see  
  \cite[Proposition 2.15]{BM17}), namely  $(2,4,4)$, $(2,3,6)$, or $(3,3,3)$.    
  In particular, $R$ has a parabolic orbifold.

  Moreover, the ramification function of $R$ only takes finite values which again implies that 
  $R$ has no periodic critical points. Hence, $R$ is expanding
   (see \cite[Proposition 2.3]{BM17}) and actually a Latt\`es map (see Theorem~\ref{thm:Lattesstruc}).  Since $f$ and $R$ are  Thurston equivalent, and both Thurston maps are expanding, it follows that $f$ and $R$ are topologically conjugate
   (see Theorem~\ref{thm:Th_equiv_top_conj}).
   Now Lemma~\ref{lem:Lattes_type_conj} implies that $f$ is a Latt\`es-type map (note that  the  Latt\`es map $R$ is also of Latt\`es-type). 
   
   It remains to consider the case where the signature of  
   $\mathcal{O}_f$ is equal to $(2,2,2,2)$. We know that $f$ is Thurston equivalent to a Latt\`es-type map $\widehat f\: \widehat S^2\ra \widehat S^2$ (see Proposition~\ref{prop:fpara_Latt_type}). The proof of  Proposition~\ref{prop:fpara_Latt_type} (see \cite[pp.~74--77]{BM17})
   shows that $f$ is a quotient of a torus endomorphism and that 
   the affine map $\widehat A$ for $\widehat f$ as in 
   Definition~\ref{def:Lattestype} can be chosen, so that its linear part $L_{\widehat A}$ is equal to the map $L$ as in 
   \eqref{eq:ALtauga}. It follows  that $L_{\widehat A}=L$  is expanding as a linear map (see Lemma~\ref{cor:Lexp}). This in turn implies that $\widehat f$ is expanding as a Thurston map
   (see Proposition~\ref{prop:Lattes_type_exp}).  Now we can again conclude that 
   $f$ and $\widehat f$ are topologically conjugate, and hence $f$ is a Latt\`es-type map by Lemma~\ref{lem:Lattes_type_conj}. 
\end{proof}

We can now wrap up the proof of Theorem~\ref{thm:quo}.

\begin{proof}[Proof of Theorem~\ref{thm:quo}]
The implications   \ref{item:quo1} 
$\Rightarrow$   \ref{item:quo2}  and \ref{item:quo2} 
$\Rightarrow$  \ref{item:quo3}  were proved in
Propositions~\ref{prop:quottorendo=para} 
and \ref{prop:para_Lattes}, respectively. The implication
\ref{item:quo3} $\Rightarrow$   \ref{item:quo1} was explicitly
stated in Proposition~\ref{prop:immediate}. 
\end{proof}

\end{document}